\newtheorem{remark}{Remark}
\numberwithin{equation}{section}
\numberwithin{defn}{section}
\numberwithin{thm}{section}
\numberwithin{lem}{section}
\numberwithin{remark}{section}
\newcommand{\mcal}{\mathcal}
\newcommand{\R}{\mathbb{R}}
\newcommand{\F}{\ensuremath{\mathcal{F}}}
\newcommand{\D}{\ensuremath{\mathcal{D}}}
\newcommand{\V}{\ensuremath{\mathcal{V}}}
\newcommand{\VV}{\ensuremath{\mathbb{V}}}
\newcommand{\W}{\ensuremath{\mathcal{W}}}
\newcommand{\A}{\ensuremath{\mathcal{A}}}
\newcommand{\B}{\ensuremath{\mathcal{B}}}
\newcommand{\T}{\ensuremath{\mathcal{T}}}
\newcommand{\DD}{\ensuremath{\mathbb{D}}}
\def \esssup{{\rm esssup}}
\def\INTER{\mathop{\rm {\cap}}\limits}
\def \BB{L_{\infty}({\cal B})}
\def\ben{\begin{enumerate}}
\def\een{\end{enumerate}}
\def\bit{\begin{itemize}}
\def\eit{\end{itemize}}
\newtheorem{definition}{Definition}
\numberwithin{definition}{section}
\newtheorem{theorem}{Theorem}
\newtheorem{lemma}[theorem]{Lemma}
\newtheorem{corollary}[theorem]{Corollary}
\newtheorem{proposition}[theorem]{Proposition}
\numberwithin{theorem}{section}
\title{Representation of convex operators and\\
their static and dynamic sandwich extensions}
\author{Jocelyne Bion-Nadal\thanks{UMR 7641 CNRS - Ecole Polytechnique. Ecole Polytechnique, 91128 Palaiseau Cedex, France. Email: jocelyne.bion-nadal@cmap.polytechnique.fr}\: and Giulia Di Nunno\thanks{Centre of Mathematics for Applications (CMA), Department of Mathematics,
University of Oslo, P.O. Box 1053 Blindern, N-0316 Oslo Norway.
Email: giulian@math.uio.no}
\thanks{Norwegian School of Economics and Business Administration (NHH), Helleveien 30, N-5045 Bergen, Norway.}}
\date{May 31, 2016}
\begin{document}
\maketitle

\begin{abstract}
Monotone convex operators and time-consistent systems of operators appear naturally in stochastic optimization and mathematical finance in the context of pricing and risk measurement. We study the dual representation of a monotone convex \emph{operator} when its domain is defined on a subspace of $L_p$, with $p\in [1,\infty]$, and we prove a sandwich preserving extension theorem. 
These results are then applied to study systems of such operators defined only on subspaces. 
We propose various dynamic sandwich preserving extension results depending on the nature of time: finite discrete, countable discrete, and continuous.
Of particular notice is the fact that the extensions obtained are time-consistent.
\end{abstract}

\section{Introduction}
The literature on extension theorems for functionals features some fundamental results. For all we just mention two: first is the Hahn-Banach theorem and its various versions, that provides e.g. a majorant preserving extension and then the K\"onig theorem that provides a sandwich preserving one (see e.g. \cite{FL}). Both cases give results for \emph{linear} functionals with values in $\mathbb{R}$.
This paper presents sandwich preserving extension theorems for \emph{convex} monotone operators defined in a subspace $L$ in $L_p(\B)$ with values in $L_p(\A)$ $(\A\subseteq \B)$, for $p\in [1,\infty]$.

Other results of this type are studied in the case of linear operators, see \cite{ADR} for $p\in[1,\infty)$ and \cite{BNDN} for $p=\infty$. Indeed the need for working in an operator setting taking values in $L_p(\A)$ is motivated by pricing and risk measurement in mathematical finance. To explain for any two fixed points in time, say $s \leq t$, a financial asset with payoff $X \in L_t \subseteq L_p(\F_t)$ has a price $x_{s,t}(X)$ evaluated at $s$. This value is $x_{s,t}(X) \in L_p(\F_s)$, where $\F_s \subseteq \F_t$.

These price operators are linear if the market model benefits of assumptions of smoothness, such as no transaction costs, no liquidity risk, perfect clearing of the market, no constrains in trading, etc. However, they are convex (when considered from the seller's perspective, the so-called ask-prices) when such market model assumptions are not fulfilled. 
Convex operators of such form also appear as value processes in the case of dynamic stochastic optimization and often this is in fact a way to obtain such price processes.

It is reasonable to have the domain of these operators defined on a subspace $L$ of the corresponding $L_p$ space. In fact, in general, not all positions are actually available for purchase in the market. It is only in the idealistic assumption of a complete market that we find that all positions are always feasible, i.e. the subspace is actually the whole $L_p$ space.
Strictly speaking, though less discussed in the literature, also the risk measurement is usually performed more reasonably on a subspace $L\subseteq L_p$. In this case $L$ represents those risks for which there is grounded measurement in terms of e.g. statistical knowledge, time series analysis, and general good information. For risks outside this set, one can always resolve with a conservative risk evaluation which corresponds to high (even too high to be competitive) hedging prices. The reasonable evaluation of risk and corresponding prices is relevant from an insurance perspective.

\bigskip
When dealing with a dynamic approach to pricing, we consider an information flow represented by a filtration $(\F_t)_{t \in [0,T]}$ $(T<\infty)$ and then a system of price operators is naturally appearing: $(x_{s,t})_{s,t\in\T}$, where $\T\subseteq [0,T]$. For the fixed times $s,t:\, s\leq t$ the price operator is $x_{s,t}: L_t \longrightarrow L_p(\F_s)$ where the domain is the subspace $L_t \subseteq L_p(\F_t)$.
An important necessary property of these system of operators is \emph{time-consistency}, which models the consistency of prices or measures of risk over time. 
Namely, for $s \leq t\leq u$ and $X \in L_u$, the evaluation $x_{s,u}(X)$ at time $s$ is required to coincide with the two steps evaluation  $x_{s,t}(x_{t,u}(X))$.

The question we address is how to extend the whole family of operators, so that the domains reach the whole $L_p(\F_t)$ in such a manner that time-consistency is preserved together with some sandwich property.
The \emph{sandwich property} itself is a control from above and below reasonably given on such operators, as it happens, in their own context, for the Hahn-Banach and K\"onig theorems. In applications this may assume various meanings. In \cite{AR}, \cite{ADR}, \cite{DE08} there are different studies on some aspects of the fundamental theorem of asset pricing with controls on tail events, first in a multiperiod market and then in a continuous time market.
In \cite{BNDN} the majorant and minorant operators are linked to no-good-deal dynamic bounds and the corresponding pricing measures. 
From the application perspective, the feasibility of these pricing rules is directly linked to the existence of the corresponding \emph{time-consistent sandwich preserving extension} of the system of price operators. 
So far this link has been explored only for linear pricing. The present paper provides fundamental results to address some questions related to convex pricing, e.g. no-good-deal bounds for convex pricing and its connections with indifferent pricing.

We stress that to obtain a time-consistent extension it is not enough to collect all the extensions of the single operators in one family. It is only via some careful procedure of extension that we can obtain such result.

\bigskip
Also it is important to mention that the \emph{representation result} we obtain for convex operators defined on a subspace $L$ of $L_p(\B)$ taking values in $L_p(\A)$ $(\A \subseteq \B)$ is crucial for the development of the extensions.
Representation theorems for convex operators have been studied in the context of risk-measures in the recent years. The first results were obtained for the static case, corresponding to operators with real values ($\A$ trivial). Here we have to mention \cite{FRG2002} (for $p \in [1,\infty]$) and \cite{FS04} (for $p=\infty$), where the domain of the operators is the whole $L_p(\B)$ space, and the paper \cite{BF2009}, where a very general framework is proposed, which also includes the case of a subspace $L$ of $L_p(\B)$ (for $p \in [1,\infty]$) with lattice property on $L$. In both cases the mappings take real values.
We also mention \cite{BN-preprint} and \cite{DS2005} for a representation in the case of operators defined on the whole $L_\infty(\B)$ with values in $L_\infty(\A)$ studied in the context of conditional risk measures.
Our contribution in this area provides a representation theorem for convex operators defined on $L \subseteq L_p(\B)$ (for $p \in [1,\infty]$) without requiring the lattice property of $L$.

\bigskip
In a summary our contribution provides various elements of novelty:
\begin{enumerate}
\item
A representation theorem for a convex operator defined on a subspace $L \subseteq L_p(\B)$ with values in $L_p(\A)$ with $\A \subseteq \B$ not trivial and without requiring the lattice property on $L$.
\item
A sandwich preserving extension theorem for such convex operators, where the sandwich bounds are  sublinear and superlinear operators defined on the whole $L_p(\B)$.
\item
Various time-consistent sandwich preserving extensions theorems dealing with a family of convex operators $x_{s,t}$ defined on $L_t \subseteq L(\F_t)$ with values in $L_p(\F_s)$, for $s \leq t$. We have to stress that
\begin{itemize}
\item
Time-consistency is a crucial property necessary in many applications such as in financial price evolution.
\item
Time-consistency is \emph{not} preserved by independent extensions of the single operators $x_{s,t}$.
\item
Time-consistent extensions require careful procedures in order to maintain the delicate relationships of the operators $x_{s,t}$ over time. 
\end{itemize}
\end{enumerate}

\bigskip
The paper is organized as follows. In Section 2 we give a precise presentation of the operators, the spaces, and the topology we  consider. Then the representation theorem is proved.
Section 3 is dedicated to the sandwich extension of such convex operators.
 The sequel of the paper deals with time-consistent systems of operators. In Section 4 the sandwich extension is studied in the case of discrete time. In Section 5 we reach out to obtain the sandwich extension for continuous time systems of operators.

\bigskip

\section{Convex operators in $L_p$ and their representation}

Let \((\Omega,\B,P)\) be a complete probability space.
Here we consider $\B$ to be the $P$-completed  $\sigma$-algebra generated by a countable family of sets in $\Omega$.
Also let $\A\subseteq \B$ be a $P$-augmented countably generated $\sigma$-algebra\footnote{This assumption will be implicitly used in the sandwich extension theorems. It is not necessary for the upcoming representation theorem.}.
For example, any Borel $\sigma$-algebra of a metrizable separable space completed by the $P$-null events satisfies this assumption.

For any $p \in [1,\infty]$ we consider the $L_p(\B):=L_p(\Omega,\B, P)$ of real valued random variables with the finite norms:
\[
\Vert X \Vert_p := 
\begin{cases}
(E[ \vert X\vert^p ] )^{1/p}, \qquad& p \in [1, \infty),\\
\esssup \vert X \vert, \qquad& p = \infty.
\end{cases}
\]
We equip these spaces with a topology.
In the cases $p \in [1, \infty)$, we consider the usual topology derived from the norm.
In the case $p=\infty$, we fix the weak* topology $\sigma(L_{\infty},L_1)$.
We will denote by the superscript ``+'', e.g. $L_p(\B)^+$, the cones of the non-negative random variables with the corresponding induced topology.

In the sequel we deal with a linear sub-space $L \subseteq L_p(\B)$.
We always consider $L$ equipped with the topology induced by the corresponding $L_p(\B)$ space.
Motivated by the applications we assume that:
\begin{enumerate}
\item[i)]
$1 \in L$,
\item[ii)]
for the $\sigma$-algebra $\A\subseteq \B$ we have the property that $1_A X \in L$ for every $A \in \A$ and every $X \in L$.
\item[iii)]
 For all $X \in L$ and every sequence  $X_n\in L$ such that $\sup_n||X_n||_p<\infty$  which converges $P$-a.s. to $X$, $X$ belongs to $L$.
\end{enumerate}

\bigskip
Fix $p\in [1,\infty]$ and the sub-space $L\subseteq L_p(\B)$ as above.
We consider an operator  
\begin{equation}
\label{operator}
x: L\longrightarrow L_p(\A)
\end{equation}
that is:
\begin{itemize}
\item
\emph{monotone}, i.e. for any $X',\,X''\,\in L$,
\begin{equation*}
  x(X')\geq   x(X''),\quad X'\geq X'',\label{mono}
\end{equation*}
\item 
\emph{convex}, i.e. for any $X',\,X''\,\in L$ and $\lambda\in [0,1]$, 
\begin{equation*}
  x \big(\lambda X'+ (1-\lambda) X''\big) \leq \lambda x(X')+ (1-\lambda) x(X'')
\label{addi}
\end{equation*}
\item 
\emph{Fatou property}, 
 for all sequence  $X_n$ in $L$ such that $\sup_n||X_n||_{L_p}<\infty$  which converges $P$.a.s. to $X$, 
\begin{equation}
\liminf_{n\to \infty} x(X_n) \geq x(X)
\label{lsc}
\end{equation}
\item 
\emph{ weak \(\A\)-homogeneous}, i.e. for all \(X\in L\)
\begin{equation*}
  x ( 1_A X)= 1_A   x(X),  \quad A \in \A,
\label{homo}
\end{equation*}
\item
\emph{projection property}
\begin{equation*}
\label{normal}
x (f)=f, \quad f\in L_p(\A)\cap L.
\end{equation*}
\end{itemize}
In particular we have $x(0)=0$ and $x(1)=1$.

\bigskip
Note that, if $p\in [1,\infty)$ and the operator $x$ is monotone and linear (as in \cite{ADR}) the assumption of weak $\A$-homogeneity is equivalent to $\A$-homogeneity, i.e. for all \(X\in L\), we have 
\begin{equation*}
  x (\xi X)= \xi   x(X)
\label{fullhomo}
\end{equation*}
for all  \(\xi \in L_p(\A)\) such that \(\xi X\in L\). 
If $p=\infty$ and the operator is linear and semi-continuous, then the same result holds (see \cite{BNDN}).
 

\bigskip
\bigskip
\subsection{Representation of a convex operator}
Our first result is a representation theorem for $L_p$-valued convex operators of the type above.
This can be regarded as a non-trivial extension of \cite[Theorem 5]{Rok}. The result by Rockafellar is written for functionals and can be retrieved setting $\A$ to be the trivial $\sigma$-algebra up to $P$-null events.

\begin{theorem}
\label{thm:rep-convex}
Let $x$ be an operator of the type \eqref{operator}.
Then the following representation holds: 
\begin{equation}
\label{eq:1}
x(X) = \esssup_{V \in \V} \Big\{ V(X) - x^*(V) \Big\}, \quad X \in L,
\end{equation}
where
\[
x^*(V) := \esssup _{X\in L} \Big\{ V(X) - x(X) \Big\}, \quad V\in \V,
\]
and $\V$ is the space of the linear, non-negative, continuous,  $\A$-homogeneous operators $V: L_p(\B) \longrightarrow L_p(\A)$ such that $E[ x^*(V)]<\infty$.

Moreover, the operator $x$ also admits representation in the form:
\begin{equation}
\label{eq:1-0}
x(X) = \esssup_{V \in \VV} \Big\{ V(X) - x^*(V) \Big\},  \quad X \in L,
\end{equation}
where
\begin{equation}
x^*(V) := \esssup _{X\in L} \Big\{ V(X) - x(X) \Big\}, \quad V\in \VV,
\label{eq:1-1}
\end{equation}
and $\VV$ is the space of the linear, non-negative, continuous,  $\A$-homogeneous operators $V: L_p(\B) \longrightarrow L_p(\A)$.
\end{theorem}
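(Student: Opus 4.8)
The plan is to prove \eqref{eq:1} by an operator-valued Fenchel--Moreau (biconjugation) argument and then read off \eqref{eq:1-0} for free. The inequality ``$\geq$'' in both representations is the easy, weak-duality direction: by the very definition of the conjugate, $x^*(V)\geq V(X)-x(X)$ pointwise for every $X\in L$, hence $V(X)-x^*(V)\leq x(X)$ for each admissible $V$, and passing to the essential supremum preserves this bound. Since $\V\subseteq\VV$ and this bound holds for every $V\in\VV$, the two statements are tied together by
\begin{equation*}
x(X)\;\leq\;\esssup_{V\in\V}\{V(X)-x^*(V)\}\;\leq\;\esssup_{V\in\VV}\{V(X)-x^*(V)\}\;\leq\;x(X),
\end{equation*}
so it suffices to establish the first inequality, i.e. to prove \eqref{eq:1}; then \eqref{eq:1-0} follows at once. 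Thus the entire content is the reverse bound $x(X)\leq\esssup_{V\in\V}\{V(X)-x^*(V)\}$, and I must be careful that the supporting operators I construct genuinely land in $\V$.

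I would reduce this to the scalar theorem of Rockafellar by localizing on $\A$. For a fixed $A\in\A$ with $P(A)>0$ set $\rho_A(X):=E[1_A\,x(X)]$. Using monotonicity, convexity and lower semicontinuity of $x$ (together with Fatou's lemma for the $\liminf$) one checks that $\rho_A:L\to\R$ is a monotone, convex, lower semicontinuous functional, and weak $\A$-homogeneity yields the localization identity $\rho_A(X)=\rho_A(1_AX)$. Applying the scalar biconjugation theorem at a prescribed $X_0$ gives, for every $\epsilon>0$, a continuous linear functional on $L$ which, after a Hahn--Banach extension to $L_p(\B)$ and Riesz representation, is of the form $\ell(X)=E[gX]$ with $g\in L_q(\B)$, $1/p+1/q=1$, and $\ell(X_0)-\rho_A^*(\ell)>\rho_A(X_0)-\epsilon$. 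Monotonicity of $\rho_A$ forces $g\geq0$, and the identity $\rho_A(X)=\rho_A(1_AX)$ lets me take $g$ supported on $A$.

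Next I would lift this scalar density to an admissible operator. The natural candidate is a conditional kernel
\begin{equation*}
V(X):=E[g\,X\mid\A],\qquad g\geq0,\quad E[g\mid\A]=1,
\end{equation*}
where $g$ is fixed to $1$ off $A$ (equivalently $V=E[\,\cdot\mid\A]$ on $A^c$, or a reference element of $\V$) and carries the separating data on $A$. Such $V$ is linear, non-negative, $\A$-homogeneous, continuous, and satisfies $V(1)=1$; this normalization is forced, since testing $x^*(V)$ on $X=c\cdot 1$ gives $x^*(V)\geq c\,(V(1)-1)$ for all $c\in\R$, so $V(1)\neq1$ would make the penalty infinite. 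To compute $E[1_A\,x^*(V)]$ and tie it to $\rho_A^*(\ell)$ I would use that the family $\{V(X)-x(X)\}_{X\in L}$ defining $x^*(V)$ is upward directed: given $X_1,X_2\in L$ and $B\in\A$, the combination $1_BX_1+1_{B^c}X_2$ again lies in $L$ by assumption (ii) and, by $\A$-homogeneity, realizes the pointwise maximum. This is exactly where assumption (ii) replaces a lattice hypothesis on $L$, and it legitimizes interchanging $\esssup$ with $E$, yielding $E[1_A\,x^*(V)]=\rho_A^*(\ell)$ and $E[x^*(V)]<\infty$, so that $V\in\V$.

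Finally I would assemble these local supports into the essential supremum by contradiction. If \eqref{eq:1} failed there would be $X_0\in L$, a set $A\in\A$ with $P(A)>0$, and $\eta>0$ with $V(X_0)-x^*(V)\leq x(X_0)-\eta$ on $A$ for \emph{every} $V\in\V$. Integrating the operator constructed above over $A$ gives
\begin{equation*}
E[1_A x(X_0)]-\epsilon\;<\;E\big[1_A(V(X_0)-x^*(V))\big]\;\leq\;E[1_A x(X_0)]-\eta\,P(A),
\end{equation*}
which is impossible as soon as $\epsilon<\eta\,P(A)$. Hence $x(X_0)$ is the least $\A$-measurable upper bound of the family, i.e. its essential supremum, and \eqref{eq:1} holds. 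I expect the principal obstacles to be precisely (a) the directed-family interchange of $\esssup$ and $E$ that produces a single integrable penalty without any lattice structure on $L$, and (b) the bookkeeping of the normalization $V(1)=1$ and the integrability $E[x^*(V)]<\infty$ needed to land in $\V$ rather than merely in $\VV$; a secondary technical point is the case $p=\infty$, where ``continuous'' means weak$^*$-continuous and the representing density must be kept in $L_1(\B)$ throughout the Hahn--Banach extension.
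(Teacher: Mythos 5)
Your toolkit is the paper's toolkit: scalarize $x$ through the expectation, invoke Rockafellar's conjugate-duality theorem, extend the resulting linear form by Hahn--Banach, represent it by a density $g\geq 0$ with $E[g\vert\A]=1$, lift it to the operator $V=E[g\,\cdot\vert\A]$, and use upward directedness of $\{V(X)-x(X):X\in L\}$ (your replacement for a lattice hypothesis, exactly the paper's Lemma \ref{lemma2}) to identify $E[x^*(V)]$ with the scalar conjugate; your weak-duality reduction of \eqref{eq:1-0} to \eqref{eq:1} is also the paper's. Where you diverge is the assembly: you localize on each $A\in\A$ (working with $\rho_A(X)=E[1_Ax(X)]$) and finish by contradiction, whereas the paper runs the construction once, globally, and then interchanges $\esssup_{V\in\V}$ with $E$ using the directedness of $\{V(X)-x^*(V):V\in\V\}$ (its Lemma \ref{lemma1}), concluding from ``pointwise $\geq$ plus equality of expectations''.

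The localization is precisely where your argument has a genuine gap, at the step ``\,$E[x^*(V)]<\infty$, so that $V\in\V$\,''. Your directedness computation controls only $E[1_Ax^*(V)]=\rho_A^*(\ell)$; on $A^c$ your primary patch is $g\equiv 1$, i.e.\ $V=E[\cdot\vert\A]$ there, and nothing bounds $E[1_{A^c}x^*(V)]$. The conditional expectation under $P$ can have infinite penalty: take $\A$ trivial, $p=1$, $L=\{c+tY_0:c,t\in\mathbb{R}\}$, $x(X)=E[f_0X]$ with $f_0\geq 0$ bounded, $E[f_0]=1$, $E[(1-f_0)Y_0]\neq 0$; this $x$ is of type \eqref{operator}, yet $x^*(E[\cdot])=\sup_{t}\,tE[(1-f_0)Y_0]=+\infty$. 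So the $V$ you build may fail to lie in $\V$, and then the contradiction hypothesis --- which quantifies only over $V\in\V$ --- simply does not apply to it, and the argument collapses. Your parenthetical fallback (patch on $A^c$ with ``a reference element of $\V$'') does repair this, via the localization identity $x^*(1_AV_1+1_{A^c}V_2)=1_Ax^*(V_1)+1_{A^c}x^*(V_2)$ of Lemma \ref{lemma1} (whose proof never uses finiteness of the penalties), but it presupposes that $\V$ contains an element with integrable penalty --- a fact you never establish, and which is also needed just to make your essential supremum and your contradiction set $A$ well defined. The clean repair is to run your own construction first with $A=\Omega$: then $\rho_\Omega(X)=E[x(X)]$ is proper, Rockafellar gives $\ell$ with finite conjugate, and the lifted $V_0=E[g_0\,\cdot\vert\A]$ satisfies $E[x^*(V_0)]=\rho_\Omega^*(\ell)<\infty$ by your own directedness step, so $\V\neq\emptyset$. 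This unlocalized case is exactly the paper's proof --- and once you have it, the per-$A$ localization and the contradiction become unnecessary, since the paper's Lemma \ref{lemma1} lets you pass directly from the expectation identity to the almost-sure one.
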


\bigskip
For future reference we borrow the terminology proper of the literature on risk measures and we call the operator $x^*$ \emph{minimal penalty}. 

\bigskip
Before the proof of the theorem we present a couple of technical lemmas.

\begin{lemma}
\label{lemma1}
If $V = 1_A V_1 + 1_{A^c} V_2$, for $V_1, V_2 \in \V$, $A \in \A$, and $A^c := \Omega \setminus A$, then 
\begin{equation}
\label{lattice}
x^*(V) = 1_A x^*(V_1) + 1_{A^c}x^*(V_2).
\end{equation}
Moreover the set $\big\{ V(X) - x^*(V): \; V\in \V \big\}$ is a lattice upward directed. 
\end{lemma}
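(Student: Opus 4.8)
The plan is to establish the penalty identity \eqref{lattice} first, and then to read off the upward directedness (lattice) property as an immediate consequence of it.

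For the identity, the key observation is that multiplication by $1_A$ for a fixed $A \in \A$ commutes with the essential supremum: for any nonempty family $\{Y_i\}$ of $\A$-measurable functions one has $1_A \esssup_i Y_i = \esssup_i (1_A Y_i)$ (the family here is nonempty since $0 \in L$). I would combine this with the algebraic identity $1_A V(X) = 1_A V_1(X) = V_1(1_A X)$, where the first equality uses $1_A 1_{A^c} = 0$ and the second the $\A$-homogeneity of $V_1$, and with the weak $\A$-homogeneity of $x$, i.e. $1_A x(X) = x(1_A X)$, to compute
\[
1_A x^*(V) = \esssup_{X \in L}\{1_A V(X) - 1_A x(X)\} = \esssup_{X \in L}\{V_1(1_A X) - x(1_A X)\}.
\]
The right-hand side is exactly what the same manipulation produces for $V_1$ in place of $V$, so $1_A x^*(V) = 1_A x^*(V_1)$. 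The symmetric computation on $A^c$ gives $1_{A^c} x^*(V) = 1_{A^c} x^*(V_2)$, and adding the two (using $1_A + 1_{A^c} = 1$) yields \eqref{lattice}. I emphasise that this argument uses $V_1, V_2 \in \V$ only to guarantee that the essential suprema are finite, not that $V$ itself belongs to $\V$ a priori.

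For the upward directedness, I would fix $X \in L$ and take $V_1, V_2 \in \V$. Set $A := \{V_1(X) - x^*(V_1) \geq V_2(X) - x^*(V_2)\}$, which lies in $\A$ because all four random variables are $\A$-measurable, and define $V_3 := 1_A V_1 + 1_{A^c} V_2$. One checks routinely that $V_3$ is linear, non-negative, continuous and $\A$-homogeneous. Applying \eqref{lattice} gives $x^*(V_3) = 1_A x^*(V_1) + 1_{A^c} x^*(V_2)$; since $x^*(V) \geq V(0) - x(0) = 0$ for every such $V$, this yields $E[x^*(V_3)] \leq E[x^*(V_1)] + E[x^*(V_2)] < \infty$, so $V_3 \in \V$. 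Finally, $\A$-homogeneity gives $V_3(X) = 1_A V_1(X) + 1_{A^c} V_2(X)$, whence $V_3(X) - x^*(V_3)$ equals $V_1(X) - x^*(V_1)$ on $A$ and $V_2(X) - x^*(V_2)$ on $A^c$, which by the definition of $A$ is precisely $\max\{V_1(X) - x^*(V_1),\, V_2(X) - x^*(V_2)\}$. Thus the family is stable under pairwise maxima (and, via the symmetric choice $1_{A^c} V_1 + 1_A V_2$, under minima), hence it is an upward directed lattice.

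The main obstacle I anticipate is the verification that $V_3 \in \V$, and specifically its continuity: for $p \in [1,\infty)$ this is immediate since multiplication by the bounded $\A$-measurable function $1_A$ is a bounded operator, but for $p = \infty$ one must check that $1_A V_1 + 1_{A^c} V_2$ remains continuous for the weak* topology. Some care is also needed to justify the commutation of $1_A$ with the essential supremum and to confirm the absence of circularity, since \eqref{lattice} is invoked to obtain $E[x^*(V_3)] < \infty$ before $V_3$ is known to lie in $\V$; this is legitimate precisely because \eqref{lattice} requires only $V_1, V_2 \in \V$.
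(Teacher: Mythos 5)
Your proof is correct and follows essentially the same route as the paper: the identity \eqref{lattice} rests on the $\A$-homogeneity of $V_1,V_2$, the weak $\A$-homogeneity of $x$, and the commutation of multiplication by $1_A$ with $\esssup$ (which the paper packages as two opposite inequalities, mixing $1_AX+1_{A^c}Y\in L$, while you package it as two localized equalities), and the directedness uses exactly the paper's construction $V_3=1_AV_1+1_{A^c}V_2$ with $A=\{V_1(X)-x^*(V_1)\geq V_2(X)-x^*(V_2)\}$. Your explicit check that $V_3\in\V$ — finiteness of $E[x^*(V_3)]$ via \eqref{lattice} together with $x^*\geq 0$, and weak* continuity of $V_3$ when $p=\infty$ — is a detail the paper leaves implicit, and your observation that \eqref{lattice} needs only $V_1,V_2\in\V$ (so there is no circularity) is accurate.
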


\begin{proof}
For any $X,Y \in L$ we have 
\[
\begin{split}
1_A \big( V_1(X) - x(X) \big) + 1_{A^c} \big( V_2(Y) - x(Y) \big) 
&= V(1_A X +   1_{A^c} Y ) - x(1_A X +  1_{A^c} Y) \\
& \leq \esssup_{Z \in L} \big\{ V(Z) - x(Z) \big\}.
\end{split}
\]
Hence,
\[\begin{split}
1_A \esssup_{X \in L} \big\{ V_1(X) - x(X) \big\}+ 1_{A^c} &\esssup_{Y \in L} \big\{ V_2(Y) - x(Y) \big\}\\
&\leq \esssup_{Z \in L} \big\{ V(Z) - x(Z) \big\}.
\end{split}\] 
Namely, we have $1_A x^*(V_1) + 1_{A^c} x^*(V_2) \leq x^*(V)$.
On the other hand, for any $Z\in L$, we have
\[ \begin{split}
V(Z) -x(Z) &= 1_A \big( V_1(Z) -x(Z) \big) + 1_{A^c} \big( V_2(Z) -x(Z) \big) \\
& \leq 1_A \esssup_{Z\in L} \big\{ V_1(Z) - x(Z)  \big\} +  1_{A^c} \esssup_{Z\in L} \big\{ V_2(Z) - x(Z)  \big\}.
\end{split}\]
Therefore, $x^*(V) \leq  1_A x^*(V_1) + 1_{A^c} x^*(V_2)$. So \eqref{lattice} holds.

To prove the lattice property, it is enough to consider for any $V_1,V_2 \in \V$, the set $A:= \big\{ V_1(X) - x^*(V_1) \geq V_2(X) - x^*(V_2) \big\} \in \A$ and $V= 1_A V_1 + 1_{A^c}V_2$. From \eqref{lattice} we have that:
\[\begin{split}
V(X) - x^*(V) &= 1_A \big(  V_1(X) - x^*(V_1) \big) + 1_{A^c} \big(  V_2(X) - x^*(V_2) \big)\\
&= \sup \big\{  V_1(X) - x^*(V_1) , V_2(X) - x^*(V_2) \big\}.
\end{split}\]
By this we end the proof.
\end{proof}

\begin{lemma}
\label{lemma2}
For any $V \in \VV$, the set $\big\{ V(X) - x(X): \; X\in L \big\}$ is a lattice upward directed. 
\end{lemma}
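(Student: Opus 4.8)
The plan is to follow the same strategy as the lattice part of Lemma~\ref{lemma1}, only now keeping the operator $V$ fixed and varying the argument $X$. Since here $V$ ranges over $\VV$ and no finiteness of the penalty is invoked, the argument is purely pointwise. Given $X_1, X_2 \in L$, I would introduce the set $B := \{ V(X_1) - x(X_1) \geq V(X_2) - x(X_2) \}$, which belongs to $\A$ because both $V(X_i)$ and $x(X_i)$ lie in $L_p(\A)$. The natural candidate realizing the pointwise maximum is $X_3 := 1_B X_1 + 1_{B^c} X_2$; it belongs to $L$ by property (ii) of the subspace (applied to $X_1$ and to $X_2$) together with the fact that $L$ is linear.

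It then remains to verify that $V(X_3) - x(X_3) = \sup\{ V(X_1) - x(X_1),\, V(X_2) - x(X_2) \}$, which delivers both the upward-directedness and the lattice closure at once. The $V$-part is immediate: linearity and $\A$-homogeneity of $V$ give $V(X_3) = 1_B V(X_1) + 1_{B^c} V(X_2)$. The $x$-part is the delicate step, since weak $\A$-homogeneity only describes how $x$ interacts with a single indicator multiplying a single argument, and gives no additivity a priori. Here I would localise: multiplying $X_3$ by $1_B$ gives $1_B X_3 = 1_B X_1$ (using $1_B 1_{B^c} = 0$), so weak $\A$-homogeneity applied first to $X_3$ and then to $X_1$ yields $1_B x(X_3) = x(1_B X_3) = x(1_B X_1) = 1_B x(X_1)$. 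The symmetric computation on $B^c$ gives $1_{B^c} x(X_3) = 1_{B^c} x(X_2)$, and summing the two produces $x(X_3) = 1_B x(X_1) + 1_{B^c} x(X_2)$.

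Combining the two identities, $V(X_3) - x(X_3) = 1_B (V(X_1) - x(X_1)) + 1_{B^c}(V(X_2) - x(X_2))$, which by the very definition of $B$ equals the pointwise supremum of the two elements. Hence every pair in the set admits an upper bound (in fact its supremum) again lying in the set, so the set is a lattice and, in particular, upward directed. The only genuine obstacle is the $x$-localisation argument: one must not assume the full splitting $x(1_B X_1 + 1_{B^c} X_2) = 1_B x(X_1) + 1_{B^c} x(X_2)$ outright, but derive it from weak $\A$-homogeneity alone, exactly via the idempotency of the $\A$-indicators sketched above.
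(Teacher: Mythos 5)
Your proposal is correct and follows exactly the paper's argument: the same set $A$ (your $B$), the same combination $1_A X_1 + 1_{A^c} X_2$, and the same key identity $V(X)-x(X) = 1_A\big(V(X_1)-x(X_1)\big) + 1_{A^c}\big(V(X_2)-x(X_2)\big)$. In fact you supply the localisation step via weak $\A$-homogeneity (using $1_B X_3 = 1_B X_1$ and idempotency of the indicators) that the paper asserts without detail, so your write-up is a fleshed-out version of the published proof.
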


\begin{proof}
We consider $X_1, X_2 \in L$ and we set $A:= \big\{ V(X_1) - x(X_1) \geq V(X_2) - x(X_2)\big\}\in \A$.
Consider $X= 1_A X_1 + 1_{A^c}X_2$. Then
\[\begin{split}
V(X) - x(X)  & = 1_A \big(  V(X_1) - x(X_1) \big) + 1_{A^c} \big(  V(X_2) - x(X_2) \big)\\
&= \sup \big\{  V(X_1) - x(X_1) , V(X_2) - x(X_2) \big\}.
\end{split}\]
By this we end the proof.
\end{proof}

\begin{lemma}
\label{lemma2b}
Let $x$ be an operator of type \eqref{operator} and consider
$h(X):=E\big[x(X)\big]$, $X\in L.$ 
Then $h$ is a convex form lower semi-continuous on $L$.
\end{lemma}

\begin{proof}
Let $c$ in $\R$, let  ${\cal C}=\{X \in L,\; h(X) \leq c\}$.\\
For $p < \infty$, we have to prove that ${\cal C}$ is closed in $L_p$. $L_p$ is  a Banach space, it is thus enough to consider a sequence $X_n$ in ${\cal C}$ with limit $X$ in $L_p$. We can assume that $\sup_n||X_n||_p < \infty$. There is a subsequence converging to $X$ $P$-a.s. It follows from the assumption iiii) on $L$ and the Fatou property that $X \in {\cal C}$.\\
In case $p = \infty$, we have to prove that ${\cal C}$ is closed for the weak* topology $\sigma(L_{\infty},L_1)$. The proof is inspired by the proof of Theorem 4.31 in \cite{FS04}
The set ${\cal C}$ being convex it is enough from the Krein Smulian Theorem (\cite{DSC} Theorem V 5 7) to prove that for all $r$ ${\cal C}_r={\cal C} \cap \{||X||_{\infty} \leq r\}$ is closed for the weak* topology. From Lemma A 64 in \cite{FS04} it is enough to prove that ${\cal C}_r$ is closed in $L_1$ for the norm topology. Let $X_n$ be a sequence in ${\cal C}_r$ with limit $X$ for the  $L_1$ norm. There is a subsequence of $X_n$ converging $P$ a.s. to $X$. Necessarily $||X||_{\infty}  \leq r$. It follows from  assumption iii) on $L$  that $X$ belongs to $L$.  From the Fatou property it follows that $X$ belongs to ${\cal C}_r$.
\end{proof}

\bigskip
We are now ready to prove Theorem \ref{thm:rep-convex}.
\begin{proof}
Set $$h(X) :=  E\big[ x(X)\big], \quad X \in L.$$
Clearly $h$ is a non-negative, monotone, convex, and proper (i.e., $h(X) > - \infty$ and finite for some $X$, see \cite[p. 1]{Rok}) functional. By Lemma \ref{lemma2b} it is also lower semi-continuous. Thus, by application of \cite[Theorem 4 and Theorem 5]{Rok}, we have the representation
\begin{equation}
\label{h}
h(X) = \sup_{v \in L^*} \Big\{ v(X) - h^*(v) \Big\}
\end{equation}
where
\[
h^*(v) :=  \sup_{X\in L} \Big\{ v(X) - h(X) \Big\}
\]
is the \emph{Fenchel transform of $h$} and $L^*$ is the set of continuous linear forms on $L$.
Note that we can restrict to $v\in L^*$ such that $h^*(v) < \infty$, and in this case $v$ is a non-negative linear form.
Recall that we always consider the usual $L_p$-norm topology in the case $p\in [1,\infty)$ and the weak* topology in the case $p= \infty$.
Then we distinguish the two cases.

If $p \in [1,\infty)$, the classical Hahn-Banach theorem guarantees that $v(X)$, $X\in L$, can be extended to a non negative continuous linear form $v(X)$, $X \in L_p(\B)$, and the extension admits the representation
\[
v(X) = E \Big[ fX \Big ], \quad X \in L_p(\B),
\]
for some $f \in L_q(\B)$ with $q= p(p-1)^{-1}$ and $f\geq 0$.

If $p = \infty$, then we refer to a version of the Hahn-Banach theorem for locally convex topological spaces as in \cite[Chapter II]{bourbaki2} and we proceed as follows.
Recall that the weak* topology on $L_\infty (\B)$, defined by the family of semi-norms
\[
p_g(\cdot) := E \Big[ g \cdot\Big], \quad  \textrm{ for every } g \in L_1(\B): \: g \geq 0,
\]
is locally convex.
For every non-negative linear form $v$ on $L$, continuous for the weak* topology, there is a semi-norm $p_g$ such that
\[
\vert v(X) \vert \leq p_g(X).
\]
Hence, applying the above mentioned corollary, we can extend $v$ to a non-negative weak* continuous linear form on $L_\infty(\B)$.
The extension admits the representation 
\[
v(X) = E \Big[ fX \Big ], \quad X \in L_\infty(\B),
\]
for some $f \in L_1(\B)$ such that $f\geq 0$.

Therefore for $p \in [1,\infty]$, the convex functional $h(X)$, $X\in L$, in \eqref{h} can be rewritten as:
\begin{equation}
\label{h2}
\begin{split}
h(X) &= \sup_{f\in L_q(\B): f \geq 0} \big\{ E[fX] - h^*(E[f\cdot] )\big\} \\
& = \sup_{f\in \W} \big\{ E[fX] - h^*(E[f\cdot] )\big\} 
\end{split}
\end{equation}
where 
\begin{equation}
\label{W}
\W :=\big \{ f\in L_q(\B): \; f\geq 0\;, h^*(E[f\cdot]) <\infty \big\}.
\end{equation}
Note that $\W \ne \emptyset$, because $h$ is real valued.

We remark immediately that $E[f\vert \A] = 1$, for every $f \in \W$. Indeed, consider any $A\in \A$ and $X= 1_A$. For any $\alpha\in \mathbb{R}$ we have
\[
E[ f \,\alpha 1_A] -h^*(E[f\cdot]) \leq h(\alpha 1_A) = E[x(\alpha 1_A)] = \alpha E[1_A].
\]
Hence,
\[
\alpha \big( E [f 1_A] - P(A) \big) \leq h^*(E[f \cdot]) < \infty.
\]
Thus taking $\alpha \to \pm \infty$, we see that $E[f1_A] = P(A)$, $A \in \A$. Namely $E[f\vert \A]=1$. 

For every $f\in \W$, denote 
\begin{equation}\label{V}
V(X) := E [fX \vert \A], \quad X \in L_p(\B).
\end{equation}
Hereafter we show that $V \in \V$.
First of all note that the operator $V$ is naturally non-negative, linear, and $\A$-homogeneous. It is also continuous. 
Indeed for the case $p\in [1, \infty)$ it is immediate from the conditional H\"older inequality.\\ For the case $p=\infty$, we consider a neighborhood of $E[fX\vert \A]$ for the weak* topology:
$$
\mcal{U}:= \big\{ Y \in L_\infty (\A) : \forall g_i \in L_1(\A),\, i= 1,...,h, \quad  \vert E\big[ g_i E[fX\vert \A \big] -E\big[ g_iY\big] \vert < \epsilon\big\}
$$
Since $f\geq0$ and $E[f\vert \A]=1$, then $g_i f\in L_1(\B)$, $i=1,...,h$, and the set
$$
\tilde{\mcal{U}}:= \big\{ Z \in L_\infty(\B) : \; 
\forall g_i \in L_1(\A),\, i= 1,...,h, \quad \vert E\big[ g_i f X \big] -E\big[ g_i f Z\big] \vert < \epsilon \big\}
$$
is a neighborhood of $X$ in $L_\infty(\B)$ in the weak* topology and for all $ Z \in \tilde{\mcal{U}}$,  $E[fZ \vert \A] \in \mcal{U}$.
This proves the continuity of $V$ for the weak* topology. Thus $V$ belongs to $\VV$.

Define  $x^*(V) := \esssup_{X \in L} \{ V(X) - x(X)\}$, for $V$ in \eqref{V}. We show that $E[x^*(V)]<\infty$. 
From the lattice property of Lemma \ref{lemma2}, from \cite[Prop VI 1.1.]{Neveu}, and the monotone convergence theorem, we have:
\[\begin{split}
E[x^*(V)] &= \sup_{X\in L} \big\{ E[V(X)] -E[x(X)] \big\}\\
&= \sup_{X\in L} \big\{ E[fX] -h(X) \big\}\\
&= \sup_{X\in L} \big\{ v(X)-h(X) \big\} = h^*(v)<\infty.
\end{split}\]
Then we can conclude that $V$ as in \eqref{V} is an element of $\V$.

We are now ready to prove the representation \eqref{eq:1}.
For every $V\in \mcal{V}$, define
\[
x^*(V) := \esssup_{X\in L} \Big\{ V(X) - x(X) \Big\}.
\]
Note that from $x(0)=0$, we have that $x^*(V) \geq 0$.
For every $V\in \mcal{V}$ and $X \in L$, we have
\[
x^*(V) \geq V(X) - x(X)
\]
or, equivalently,
\[
x(X) \geq V(X) - x^*(V).
\]
Thus
\begin{equation}
\label{eq:2}
x(X) \geq \esssup _{V\in \mcal{V}} \Big\{ V(X) - x^*(V) \Big\}.
\end{equation}
To conclude we need to show the reverse inequality.
To this aim it is enough to show that
\begin{equation}
\label{eq:3}
E\Big[x(X)\Big]\leq E\Big[ \esssup _{V\in \mcal{V}} \Big\{ V(X) - x^*(V) \Big\}\Big], \quad X \in L.
\end{equation}
Indeed we  have:
\[\begin{split}
E[x(X)] =h(X) &=  \sup_{f\in \W} \big\{ E[fX] - h^*(E[f \cdot]) \big\}\\
 &=  \sup_{f\in \W} \big\{E[ E[fX\vert \A]] - h^*(E[f \cdot]) \big\}\\
  & \leq  \sup_{V\in \V} \big\{ E[V(X)] - E[x^*(V)] \big\}\\
    &=  E[\esssup_{V\in \V} \big\{ V(X) - x^*(V) \big\} ]
\end{split}\]
where the last equality is due to the lattice property of Lemma \ref{lemma1} and \cite[Proposition VI.1.1]{Neveu}. We have then proved the representation \eqref{eq:1}.

To prove the representation \eqref{eq:1-0}, we note that $\V \subseteq \VV$. 
From equation \eqref{eq:1} we have
\begin{equation*}
\begin{split}
x(X) &= \esssup_{V \in \V} \Big\{ V(X) - x^*(V) \Big\} \\
 & \leq \esssup_{V \in \VV} \Big\{ V(X) - x^*(V) \Big\} .
  \end{split}
\end{equation*}
From the definition of $x^*$ in \eqref{eq:1-1} we have that
$
x^*(V) \geq V(X) - x(X)
$, that is $x(X) \geq V(X) - x^*(V)$, for every $V \in \VV$ and $X \in L$.
So, we conclude that
$$
x(X) \geq \esssup_{V \in \VV} \big\{ V(X) - x^*(V) \big\}.
$$
By this we end the proof.
\end{proof}

\bigskip
\begin{corollary}
\label{cor:1}
Let $x$ be of type \eqref{operator}. Then the following representation holds:
\begin{equation}
\label{eq:4}
x(X) = \esssup_{f \in \DD} \Big\{ E\big[ f X\vert \A \big ] - x^*(E\big[ f \cdot \vert \A \big ]  ) \Big\}, \quad X \in L,
\end{equation}
where
\begin{equation}
\label{D}
\DD :=\big \{ f\in L_q(\B): \; f\geq 0,\;  E[f \vert \A] =1 \big\}
\end{equation}
with $q=p(p-1)^{-1}$ 
and
\[
x^*(E\big[ f \cdot \vert \A \big ] ) := \esssup _{X\in L} \Big\{ E\big[ f X\vert \A \big ]  - x(X) \Big\}, \quad f \in \DD.
\]
\end{corollary}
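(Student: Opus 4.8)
The plan is to read this corollary as a \emph{density reformulation} of Theorem~\ref{thm:rep-convex}: every operator entering the representations \eqref{eq:1} and \eqref{eq:1-0} is of the conditional--expectation form $E[f\cdot\vert\A]$, and $\DD$ in \eqref{D} is precisely the set of admissible densities. So I would first fix $f\in\DD$ and set $V_f(X):=E[fX\vert\A]$. Exactly as was checked in the proof of Theorem~\ref{thm:rep-convex} for the operators \eqref{V}, $V_f$ is linear, non-negative (because $f\geq 0$), $\A$-homogeneous, and continuous, so that $V_f\in\VV$; moreover $V_f(1)=E[f\vert\A]=1$. I would also record that the associated penalty $x^*(V_f)=\esssup_{X\in L}\{E[fX\vert\A]-x(X)\}$ is literally the quantity $x^*(E[f\cdot\vert\A])$ appearing in \eqref{eq:4}.

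The inequality $x(X)\geq$ (right-hand side of \eqref{eq:4}) I would obtain directly from the definition of the penalty: for every $f\in\DD$ and every $X\in L$ one has $E[fX\vert\A]-x(X)\leq x^*(E[f\cdot\vert\A])$, that is $E[fX\vert\A]-x^*(E[f\cdot\vert\A])\leq x(X)$, and taking the essential supremum over $f\in\DD$ gives the bound. (Equivalently this is the esssup-side of \eqref{eq:1-0} specialised to the operators $V_f$, which all lie in $\VV$.)

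For the reverse inequality I would start from the representation \eqref{eq:1}, namely $x(X)=\esssup_{V\in\V}\{V(X)-x^*(V)\}$, and show that each $V\in\V$ coincides with $V_f$ for some $f\in\DD$; this identification is the only genuine content of the corollary and the step I expect to be the main obstacle. It amounts to a \emph{conditional Riesz representation}: the scalar functional $X\mapsto E[V(X)]$ is linear, continuous, and non-negative on $L_p(\B)$, so by the same Hahn--Banach/Riesz duality used in the proof of Theorem~\ref{thm:rep-convex} (the weak* version when $p=\infty$) it is represented by some $f\in L_q(\B)$ with $f\geq 0$; testing against $1_A X$ for $A\in\A$ and using $\A$-homogeneity, $E[1_A V(X)]=E[V(1_A X)]=E[1_A fX]=E[1_A\,E[fX\vert\A]]$, whence $V(X)=E[fX\vert\A]=V_f(X)$. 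The normalisation then follows exactly as the computation ``$E[f\vert\A]=1$ for $f\in\W$'' in that proof: since $\alpha 1\in L_p(\A)\cap L$ gives $x(\alpha 1)=\alpha$, the finiteness $E[x^*(V)]<\infty$ (hence $x^*(V)<\infty$ a.s.) together with $x^*(V)\geq V(\alpha 1)-x(\alpha 1)=\alpha\,(E[f\vert\A]-1)$ forces, letting $\alpha\to\pm\infty$, the equality $E[f\vert\A]=1$, i.e. $f\in\DD$. Transporting the essential supremum over $\V$ to an essential supremum over $\DD$ then yields $x(X)\leq$ (right-hand side of \eqref{eq:4}), and combined with the previous paragraph this establishes \eqref{eq:4}.
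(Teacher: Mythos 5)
Your proof is correct, and its mathematical content matches what the paper relies on, but the execution is genuinely different: the paper's proof of this corollary is two lines long and consists of citing an external one-to-one correspondence between $\VV$ and $\DD$ (namely \cite[Theorem 1.1]{ADR} for $p\in[1,\infty)$ and \cite[Theorem 3.3 and Proposition 3.14]{BNDN} for $p=\infty$), after which the claim follows from \eqref{eq:1-0}. You instead re-derive that correspondence from scratch: the easy inequality from the definition of the penalty, and for the hard inequality the identification of each $V\in\V$ with some $V_f$, $f\in\DD$, via the scalarization $X\mapsto E[V(X)]$, Riesz/weak* duality, $\A$-homogeneity to upgrade the scalar identity $E[V(X)]=E[fX]$ to the conditional one $V(X)=E[fX\vert\A]$, and the projection property plus $E[x^*(V)]<\infty$ to force $E[f\vert\A]=1$. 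Two remarks on what each route buys. First, your argument is self-contained where the paper outsources the key step, and each ingredient you use (Riesz for $p<\infty$, the fact that $\sigma(L_\infty,L_1)$-continuous functionals are given by $L_1$ densities for $p=\infty$, the a.s.\ identification of two integrable $\A$-measurable variables with equal integrals over all $A\in\A$) is sound. Second, your choice to run the hard direction through $\V$ (where $E[x^*(V)]<\infty$, hence $x^*(V)<\infty$ a.s.) rather than through $\VV$ is not cosmetic: it is exactly what makes the normalisation argument $E[f\vert\A]=1$ work, and it quietly sidesteps a looseness in the paper's own formulation, since the literal set $\VV$ (no normalisation $V(1)=1$) contains elements such as $V=0$ that correspond to no $f\in\DD$; the cited correspondence is really between $\DD$ and the normalised operators. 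So your proof is not only valid but slightly more careful than the citation it replaces.
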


\begin{proof}
From \cite[Theorem 1.1]{ADR}, for the case $p\in [1, \infty)$, and \cite[Theorem 3.3 and Proposition 3.14]{BNDN}, for $p = \infty$, we know that there is a one-to-one relationship between $\VV$ and $\DD$. Then the results follow directly from the representation \eqref{eq:1-0}.
\end{proof}

The representations of convex functionals were studied in the recent literature of risk measures in the case when the $\sigma$-algebra $\mathcal{A}$ is trivial.
In \cite{FRG2002} the representation is studied for convex risk measures (i.e. convex, monotone, lower semicontinuous, and translation invariant functionals) defined on the \emph{whole} $L_p(\B)$ with $p\in [1,\infty]$ with values in $(-\infty,\infty)$. In \cite{FS04} the study is carried on for $p=\infty$. In both cases it is crucial that the functionals are defined on the whole space.\\ 
In \cite{BF2009}, the representation is studied for convex, monotone, order continuous functionals defined on Fr\'echet lattices and taking values in $(-\infty,\infty]$. This allows for a general setup, however the vector space $L$ on which the map is defined needs to be a vector lattice.  This is not the case in the present paper. The lattice property in Lemma 2.1 and 2.2 is a lattice property in the arrival space $L_p({\cal A})$, not in $L$. Another important  difference is that in [4] the sigma-algebra ${\cal A}$ is trivial.  \\
If $\A$ is non-trivial, then we can refer to \cite{BN-preprint} and \cite{DS2005} for studies on the representation of convex operators in the context of conditional risk measures (i.e. convex, monotone, lower semicontinuous, translation invariant operators) defined on the \emph{whole} $L_\infty(\B)$.\\
In conclusion, our contribution to this area provides a representation of convex operators defined on a subspace $L \subseteq L_p(\B)$ with values in $L_p(\A)$, $p\in [1,\infty]$, with very mild hypothesis on $L$.

\bigskip
\bigskip
\section{Sandwich extension of a convex operator}
In the sequel we consider a criterion for the existence of an extension $\bar x$ of the convex operator $x$ to the whole $L_p(\B)$. The given $x$ lies within two operators $m$ and $M$. This extension is sandwich preserving. There is no uniqueness of such sandwich preserving extension, but our approach allows for an explicit representation of at least one of them, denoted $\hat x$, which turns out to be the maximal.

First of all we introduce the \emph{minorant} as a superlinear operator:
$
m: L_p(\B)^+ \longrightarrow L_p(\A)^+
$, i.e.,
\begin{equation*}
\label{superlinear}
m(X+Y) \geq m(X) + m(Y) , \qquad  X,Y \in L_p(\B)^+, 
\end{equation*}
$$
m(\lambda X) = \lambda m(X), \qquad X \in L_p(\B)^+, \, \lambda \geq 0,
$$
and the \emph{majorant} as a sublinear operator:
$
M: L_p(\B)^+ \longrightarrow L_p(\A)^+
$, i.e., 
\begin{equation*}
\label{sublinear}
M(X+Y) \leq M(X)+M(Y), \qquad  X,Y \in L_p(\B)^+, 
\end{equation*}
$$M(\lambda X)=\lambda M(X), \qquad X \in L_p(\B)^+,\, \lambda \geq 0 .$$
We remark that sublinearity implies $M(0)=0$.

Moreover, in the case $p=\infty$, we say that the map $M:L_{\infty}(\B)^+ \longrightarrow  L_{\infty}(\A)^+$ is \emph{regular} if for every decreasing sequence $(X_n)_n$ in $\BB$ with $X_n \downarrow 0$, $n\to \infty$ $P$-a.s, we have
\begin{equation}
\label{regular}
M(X_n) \rightarrow 0, \quad n \to \infty\quad P a.s. 
\end{equation}

\begin{theorem}
Fix $p \in [1, \infty]$. Let $x:L \longrightarrow L_p({\cal A})$ be of type \eqref{operator}. Consider 
the weak $\A$-homogeneous operators $m,M:  L_p({\cal B})^+ \rightarrow  L_p({\cal A})^+$ such that $m$ is  superlinear and $M$ is sublinear and, if $p=\infty$, $M$ is also regular.
Assume the sandwich condition:
\begin{equation}
m(Z)+x(X) \leq M(Y)
\label{eqS1}
\end{equation}
$$\forall X \in L\quad \forall \;Y,Z \in L_p({\cal B})^+ :\;\; Z+X \leq Y.$$ 
Then $x$ admits an extension $\hat x$ (to the whole  $L_p({\cal B})$), which is convex, monotone, lower-semicontinuous, weak ${\cal A}$-homogeneous and satisfying the projection property such that \eqref{eqS1} is preserved, i.e. 
\begin{equation}
 m(Z)+\hat x(X) \leq M(Y) 
\label{eqS2}
\end{equation}
$$\forall X \in L_p({\cal B})\quad \forall \;Y,Z \in L_p({\cal B})^+ :\;\; Z+X \leq Y.$$
In particular the operator
\begin{equation}
\hat x (X):= \esssup_{V\in  \V^S} \big\{ V(X) - x^*(V) \big\}, \quad X \in L_p(\B),
\label{ext-S1}
\end{equation}
with
\begin{equation}
 x^* (V):= \esssup_{X\in L} \big\{ V(X) - x(X) \big\}, \quad V \in \V^S,
\label{pen-S1}
\end{equation}
is a sandwich preserving extension of $x$.
Here above ${\cal \V^S}$ is the set of linear, continuous, non-negative, ${\cal A}$-homogeneous operators on $L_p({\cal B})$ such that $E[x^*(V)]<\infty$, and satisfying the sandwich condition: 
$$
m(X) \leq V(X) \leq M(X), \qquad  X \in L_p({\cal B})^+.
$$
Moreover, for any other such extension ${\bar x}$, we have that
$$
\hat x(X) \geq {\bar x}(X), \qquad X\in L_p(\B).
$$
We call $\hat x$ the maximal extension.
\label{thmsand}
\end{theorem}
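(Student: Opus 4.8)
The plan is to exploit the representation theorem (Theorem~\ref{thm:rep-convex}) and reduce the sandwich extension problem to a statement about the set of linear operators $\V^S$ that both lie between $m$ and $M$ and respect the existing minimal penalty of $x$. First I would verify that every operator $V \in \V$ arising in the representation \eqref{eq:1} of $x$ already satisfies the sandwich bounds $m(X) \le V(X) \le M(X)$ on $L_p(\B)^+$, so that in fact $\V = \V^S$ when restricted to the relevant operators; this is where the sandwich condition \eqref{eqS1} is used in its sharpest form. Concretely, given $V \in \V$ and $X \in L_p(\B)^+$, the inequality \eqref{eqS1} applied with the triple $(Z,X',Y)$ chosen appropriately (for instance taking $X' \in L$ and $Y = Z + X'$) should force, after passing through the duality defining $x^*$, the bounds $V(X) \ge m(X)$ and $V(X) \le M(X)$. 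The key linear-operator extension lemmas behind this step are the results of \cite{ADR} for $p \in [1,\infty)$ and \cite{BNDN} for $p=\infty$, which guarantee that a continuous, non-negative, $\A$-homogeneous linear operator dominated by a sublinear $M$ (regular, when $p=\infty$) admits extensions staying between $m$ and $M$.

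Once $\V^S$ is identified as a nonempty set of admissible dual elements, I would define $\hat x$ by \eqref{ext-S1} and check the five structural properties one at a time. Convexity, monotonicity, $\A$-homogeneity, and the projection property all follow formally from the fact that $\hat x$ is an essential supremum of affine functions $V(\cdot) - x^*(V)$ built from linear, non-negative, $\A$-homogeneous operators $V$; monotonicity comes from $V \ge 0$, and $\A$-homogeneity from the $\A$-homogeneity of each $V$ together with Lemma~\ref{lemma1} (the lattice/upward-directedness that lets the $1_A$ factor pass through the esssup). Lower semicontinuity is the standard consequence of being a supremum of continuous operators. That $\hat x$ extends $x$ is exactly the content of the representation \eqref{eq:1}: on $L$ we recover $x(X) = \esssup_{V \in \V^S}\{V(X) - x^*(V)\}$ because $\V^S$ contains precisely the operators achieving the representation.

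The sandwich preservation \eqref{eqS2} is then direct: for $X \in L_p(\B)$ and $Y,Z \in L_p(\B)^+$ with $Z + X \le Y$, and for each $V \in \V^S$, I would write $V(X) - x^*(V) \le M(Y) - m(Z)$ by using $x^*(V) \ge 0$, the bound $V(Y-Z-X) \ge 0$ from monotonicity of $V$, and then the sandwich bounds $V(Z) \ge m(Z)$ and $V(Y) \le M(Y)$ together with superadditivity of $m$ and subadditivity of $M$; taking the essential supremum over $V \in \V^S$ gives $\hat x(X) \le M(Y) - m(Z)$, which is \eqref{eqS2}. Maximality is the cleanest part: any sandwich-preserving extension $\bar x$ is itself convex, monotone, lower semicontinuous, and weak $\A$-homogeneous with the projection property, so by Theorem~\ref{thm:rep-convex} it admits a representation $\bar x = \esssup_{V \in \bar\V}\{V(\cdot) - \bar x^*(V)\}$ over its own dual set $\bar\V$; since $\bar x = x$ on $L$ one checks $\bar x^* = x^*$ on the common operators and $\bar\V \subseteq \V^S$, whence $\bar x \le \hat x$ pointwise.

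The main obstacle I expect is the first step: showing that the dual operators appearing in the representation genuinely take values in $\V^S$, i.e. that the sandwich condition on $x$ transfers to the individual linear operators $V$ and, conversely, that $\V^S$ is rich enough (nonempty, and large enough to reproduce $x$ on $L$). This is delicate because the sandwich condition \eqref{eqS1} couples $m$, $M$, and $x$ across the subspace $L$ and the full positive cone, and controlling each $V$ requires the Hahn--Banach/König-type extension machinery for linear operators into $L_p(\A)$, invoking regularity of $M$ to handle the weak* continuity subtleties in the $p=\infty$ case; ensuring the extended $V$ simultaneously stays $\A$-homogeneous, continuous, and pinned between $m$ and $M$ is where the technical heart of the argument lies.
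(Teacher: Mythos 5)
There is a genuine gap at the very first step of your plan, and it is the step on which everything else rests. You claim that every $V \in \V$ appearing in the representation \eqref{eq:1} already satisfies $m(X) \le V(X) \le M(X)$ for all $X \in L_p(\B)^+$, so that in effect $\V = \V^S$. This is false. The sandwich condition \eqref{eqS1} constrains $x$ only on $L$, and the dual operators $V$ are tied to $x$ only through the penalty $x^*(V) = \esssup_{X \in L}\{V(X) - x(X)\}$, which sees nothing but the restriction $V|_L$; the values of $V$ off the subspace $L$ are completely unconstrained by \eqref{eqS1}. For a concrete counterexample take $\A$ trivial, $L = \mathbb{R}\cdot 1$, $x(c)=c$, $m \equiv 0$, $M(X) = 2E[X]$: the sandwich condition holds, and every $V(X) = E[fX]$ with $f \ge 0$, $E[f]=1$ has $x^*(V)=0$ and so belongs to $\V$, yet $V \le M$ on $L_p(\B)^+$ forces $f \le 2$ $P$-a.s., which fails for most densities. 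So $\V$ is genuinely larger than $\V^S$, and the identification your whole argument is built on collapses.

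What the paper does instead --- and what is missing from your proposal --- is a two-stage construction. First, for each $V \in \V$ only the \emph{restriction} $V|_L$ is shown to satisfy the coupled inequality $m(Z)+V(X) \le M(Y)$ for $X \in L$, $Y,Z \in L_p(\B)^+$, $Z+X\le Y$ (this is \eqref{sand-V}); even this needs a scaling argument (replace $(Z,X,Y)$ by $(\alpha Z,\alpha X,\alpha Y)$ and let $\alpha \to \infty$) to eliminate the penalty term $x^*(V)$, a point that your phrase ``passing through the duality'' glosses over. Second, each restriction $V|_L$ is \emph{re-extended}, via the sandwich-preserving extension theorem for linear operators (\cite[Proposition 3.11 and Theorem 3.9]{BNDN}, see also \cite[Theorem 5.1]{ADR}), to a new operator $V^S$ on all of $L_p(\B)$ lying between $m$ and $M$; in general $V^S \ne V$ off $L$. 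Since the penalty \eqref{pen-S1} depends only on $V|_L$, one has $x^*(V^S)=x^*(V)$, and this is precisely what makes $\V^S$ nonempty and rich enough for \eqref{ext-S1} to reproduce $x$ on $L$ --- a fact your proposal asserts (``$\V^S$ contains precisely the operators achieving the representation'') but cannot justify without the re-extension step. Your remaining steps (structural properties of $\hat x$, sandwich preservation, maximality) are essentially sound, except for a small slip in the maximality argument: from $\bar x = x$ on $L$ one gets only $\bar x^*(V) \ge x^*(V)$, since the supremum defining $\bar x^*$ runs over the larger set $L_p(\B)$, not equality; fortunately the inequality is exactly what the comparison $\bar x \le \hat x$ requires.
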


\begin{proof}
From Theorem \ref{thm:rep-convex}, for all $X \in L$,
$$
x(X)=\esssup_{V\in {\cal V}} \big\{ V(X) - x^*(V) \big\}.
$$
Thus $\forall V \in {\cal V}$, the restriction of $V$ to $L$ satisfies:
$$
m(Z)+V(X)-x^*(V) \leq M(Y)\;\; \forall X \in  L \; \forall Y,Z \in L_p({\cal B})^+: \;\; Z+X \leq Y.
$$ 
Then, for all $\alpha > 0$, 
$$\alpha m(Z)+ \alpha V(X)-x^*(V) \leq \alpha M(Y)$$
Let $A=\{m(Z)+V(X)-M(Y) \geq 0\}$, 
$$0 \leq E(1_A (m(Z)+V(X)-M(Y)) \leq \frac{1}{\alpha} E(1_A x^*(V))<\infty.$$
Let $\alpha \rightarrow \infty$, it follows that $1_A (m(Z)+V(X)-M(Y))=0\; P\;a.s.$. 
Thus 
\begin{equation}
\label{sand-V}
m(Z)+V(X) \leq M(Y)
\end{equation} 
for all $V \in {\cal V}$ and $\forall X \in  L,  \forall Y,Z \in L_p({\cal B})^+:$ $ Z+X \leq Y$.
From the sandwich extension theorem for linear operators, \cite[Proposition 3.11]{BNDN} in case of $L_p$ spaces $1 \leq p <\infty$, and \cite[Theorem 3.9]{BNDN} in case of $L_{\infty}$ spaces (see also \cite[Theorem 5.1]{ADR}), every $V \in {\cal V}$ restricted to $L$, admits a sandwich preserving linear extension to the whole $L_p({\cal B})$ denoted $V^S$ which is monotone, lower semi continuous, weak ${\cal A}$-homogeneous, and satisfying  the sandwich condition: 
$$
m(Z)+V^S(X) \leq M(Y) \;\;\forall X \in L_p(\B)\; \forall  Y,Z \in L_p({\cal B})^+:\; \; Z+X \leq Y.
$$
Define 
$$
\hat x(X) :=\esssup_{V\in \V^S} \big\{ V(X) - x^*(V) \big\}, \qquad X\in L_p(\B),
$$
where $\V^S$ is the set described in the statement of the theorem, and $x^*(V)=\esssup_{Y \in L} (V(Y)-x(Y))$. 
It follows that $\hat x$ extends $x$ and it is lower semi continuous, convex, monotone, weak ${\cal A}$-homogeneous and it also satisfies the projection property.
It remains to verify that $\hat x$ satisfies the sandwich condition.
Let $Y,Z \in L_p({\cal B})^+\; \forall X \in L_p(\B):\; Z+X \leq Y$,
\begin{eqnarray}
m(Z)+ \hat x(X)& = &\esssup_{V\in \V^S} \big\{ m(Z)+V(X) - x^*(V) \big\}\nonumber \\& \leq  & M(Y)+\esssup_{V\in \V^S}(-x^*(V))\nonumber \\& =  &  M(Y)+
\esssup_{V\in {\cal V}}(-x^*(V))=M(Y).
\end{eqnarray}

Now consider any other convex, monotone, lower-semicontinuous, weak $\A$-homogeneous extension ${\bar x}$ satisfying the sandwich condition. 
From Theorem \ref{thm:rep-convex} we have that 
$$
{\bar x}(X) = \esssup_{V \in \V_{{\bar x}}}  \big\{ V(X) -  {\bar x}^*(V)\big\}, \quad X \in L_p(\B),
$$
with
$$
{\bar x}^*(V) = \esssup_{X \in L_p(\B)}  \big\{ V(X) - {\bar x} (X) \big\}
$$
where $\V_{{\bar x}}$ is given by the mentioned theorem with reference to the operator ${\bar x}$.
Moreover, since ${\bar x}$ satisfies the sandwich condition we can see that \eqref{sand-V} holds for $\V_{{\bar x}}$ and that 
$$
{\bar x}(X) = \esssup_{V \in \V_{{\bar x}}^S} \big\{ V(X) - {\bar x}^*(V)\big\}, \quad X \in L_p(\B).
$$
From the definition of ${\bar x}^*$ and of $x^*$ with ${\bar x}(X) = x(X)$, $X\in L$, we can see that ${\bar x}^*(V) \geq x^*(V)$ is valid for all $V \in \VV$.  Hence $E[\bar x^*(V)] \geq E[x^*(V)]$, $V \in \VV$, and in particular  $\V_{{\bar x}}^S \subseteq \V^S$. 
Then
${\bar x}(X) \leq \esssup_{V \in \V^S} \big\{ V(X) - {\bar x}^*(V)\big\}$.
On the other hand for every $ {V \in \V^S}$ and $X \in L_p(\B)$ we have 
$V(X) - {\bar x}(X) \leq {\bar x}^*(V)$, hence $V(X) - {\bar x}^*(X) \leq {\bar x}(X)$.
Thus we conclude $\esssup_{V \in \V^S} \big\{ V(X) - {\bar x}^*(V)\big\} \leq {\bar 
x}(X)$ and we have proved that:
$$
{\bar x}(X) = \esssup_{V \in \V^S} \big\{ V(X) - {\bar x}^*(V)\big\}.
$$
Since 
${\bar x}^*(V) \geq x^*(V)$ for all $V \in \V_{{\bar x}}^S$, then 
${\bar x}(X) \leq \hat x(X)$ for all $X \in L_p(\B)$.
\end{proof}

\begin{remark}
The above extension $\hat x$ \eqref{ext-S1} satisfies the following inequality:
$$ 
\forall X \in L_p({\cal B})^+ \quad m(X) \leq -\hat x(-X) \leq \hat x(X) \leq M(X).
$$
This inequality is in fact equivalent to \eqref{eqS2} for every convex, monotone, lower semi continuous, weak ${\cal A}$-homogeneous operator defined on the whole $L_p({\cal B})$. 
\rm{The first assertion   follows from equation \eqref{eqS2} applied one time with $(Z,X,Y)=(X,-X,0)$ and the other time with $(Z,X,Y)=(0,X,X)$. The second assertion follows from the convexity of $\hat x$.
}
\label{rem0}
\end{remark}

\begin{corollary}
\label{cor:S1}
For every $V \in \V^S$, the penalty \eqref{pen-S1} in the representation \eqref{ext-S1} of the extension $\hat x$ of the operator $x$, satisfies $x^*(V) = \tilde x(V)$, where
\begin{equation}
\label{pen-S2}
\tilde x (V):= \esssup_{X\in  L_p(\B)} \big\{ V(X) - \hat x(X) \big\}.
\end{equation} 
Moreover, define $\VV^S$ as the set of elements in $\VV$ satisfying the sandwich condition \eqref{sand-V}. Then the extension \eqref{ext-S1} can be rewritten as:
\begin{equation}
\hat x (X) = \esssup_{V\in  \VV^S} \big\{ V(X) - \tilde x(V) \big\}, \qquad X\in L_p(\B).
\label{ext-S2}
\end{equation}
Furthermore, we can also give the representation:
\begin{equation}
\hat x (X) = \esssup_{f\in  \DD^S} \big\{ E[fX\vert \A ] - \tilde x(E[f\cdot \vert \A]) \big\}, \qquad X\in L_p(\B),
\label{ext-S3}
\end{equation}
with 
\begin{equation}
\label{DS}
\DD^S:= \Big\{ f  \in \DD: \, m(X) \leq E[fX\vert \A ] \leq M(X), \: \forall X \in L_p(B) \Big\}.
\end{equation}
The penalty $\tilde x$ is called {\it minimal penalty} following the terminology of risk measures.
\end{corollary}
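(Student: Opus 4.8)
The plan is to establish the three assertions in sequence, each building on the representation of $\hat x$ over $\V^S$ furnished by Theorem \ref{thmsand}, together with the $\VV$--$\DD$ correspondence already exploited in Corollary \ref{cor:1}.

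First, to prove $x^*(V) = \tilde x(V)$ for $V \in \V^S$, I would argue by two inequalities. Since $\hat x$ extends $x$ and $L \subseteq L_p(\B)$, restricting the essential supremum in \eqref{pen-S2} to $X \in L$ gives immediately $\tilde x(V) \geq \esssup_{X \in L}\{V(X) - \hat x(X)\} = \esssup_{X \in L}\{V(X) - x(X)\} = x^*(V)$. For the reverse inequality I would use that $V$ itself appears in the essential supremum \eqref{ext-S1} defining $\hat x$, so that $\hat x(X) \geq V(X) - x^*(V)$ $P$-a.s. for every $X \in L_p(\B)$; rearranging yields $V(X) - \hat x(X) \leq x^*(V)$ for all $X$, and taking the essential supremum over $X \in L_p(\B)$ gives $\tilde x(V) \leq x^*(V)$. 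Combining the two gives the claimed equality.

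Second, for the representation \eqref{ext-S2} I would start from \eqref{ext-S1} and substitute the identity just proved to rewrite $\hat x(X) = \esssup_{V \in \V^S}\{V(X) - \tilde x(V)\}$. Because $\V^S \subseteq \VV^S$ (the only difference being the integrability requirement $E[x^*(V)] < \infty$), enlarging the index set can only increase the essential supremum, giving $\hat x(X) \leq \esssup_{V \in \VV^S}\{V(X) - \tilde x(V)\}$. The opposite inequality is automatic from the very definition of $\tilde x$ in \eqref{pen-S2}: for each $V \in \VV^S$ one has $\tilde x(V) \geq V(X) - \hat x(X)$, hence $\hat x(X) \geq V(X) - \tilde x(V)$, and taking the essential supremum over $V \in \VV^S$ gives the reverse bound. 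Equality \eqref{ext-S2} follows.

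Third, the representation \eqref{ext-S3} I would deduce from \eqref{ext-S2} by invoking the one-to-one correspondence between $\VV$ and $\DD$ recalled in the proof of Corollary \ref{cor:1} (namely \cite[Theorem 1.1]{ADR} for $p \in [1,\infty)$ and \cite[Theorem 3.3 and Proposition 3.14]{BNDN} for $p=\infty$), under which $V(X) = E[fX \vert \A]$. Under this bijection the sandwich condition $m(X) \leq V(X) \leq M(X)$ becomes exactly $m(X) \leq E[fX\vert \A] \leq M(X)$, so that $\VV^S$ corresponds precisely to $\DD^S$ as defined in \eqref{DS}, and \eqref{ext-S3} is a transcription of \eqref{ext-S2}. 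The arguments are essentially formal manipulations of essential suprema; the only point requiring care is that all the inequalities above are $P$-a.s. statements and that the essential supremum of a family dominated by a fixed random variable is itself dominated by that variable, which legitimizes the passage from the pointwise bound $V(X) - \hat x(X) \leq x^*(V)$ to $\tilde x(V) \leq x^*(V)$ in the first step. No appeal to the upward-directed/lattice structure of Lemmas \ref{lemma1}--\ref{lemma2} is needed here, since at no stage do I interchange an essential supremum with an expectation.
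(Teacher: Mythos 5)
Your proof is correct and follows essentially the same route as the paper's: the two-inequality argument for $x^*(V)=\tilde x(V)$ (one side from $\hat x$ extending $x$, the other from the bound $\hat x(X)\geq V(X)-x^*(V)$ implicit in \eqref{ext-S1}), the enlargement of the index set from $\V^S$ to $\VV^S$ combined with the definitional bound $\hat x(X)\geq V(X)-\tilde x(V)$ for \eqref{ext-S2}, and the $\VV$--$\DD$ correspondence of Corollary \ref{cor:1} for \eqref{ext-S3}. The paper's own proof is a terser version of exactly this, deferring the second and third steps to "the same arguments as" the proofs of \eqref{eq:1-0} and \eqref{eq:4}.
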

\begin{proof}
Fix $V \in \V^S$. 
From the definition we have $x^*(V) \leq \tilde x(V)$. On the other hand, from \eqref{ext-S1}, we have $\hat x(X) \geq V(X) - x^*(V)$, for all $X \in L_p(\B)$.
Hence, $ x^*(V) \geq V(X) - \hat x(X)$, for all $X \in L_p(\B)$. 
Thus  $x^*(V) \geq \tilde x(V)$.
This proves \eqref{pen-S2}.
For the proof of \eqref{ext-S2} we apply the same arguments as for the proof of \eqref{eq:1-0} in Theorem \ref{thm:rep-convex}.
And for the proof of \eqref{ext-S3} we apply the same arguments as for the proof of \eqref{eq:4} in Corollary \ref{cor:1}.
\end{proof}

\begin{definition}
The operator $m$ is \emph{non degenerate} if it satisfies $E(m(1_B))>0$ for all $B \in {\cal B}$  such that $P(B)>0$.
\label{defnondeg}
\end{definition}

\begin{lemma}
\label{lemmaequiv0}
Assume that $m$ is non degenerate.  Every $f \in  \DD^S$ such that $E(\tilde x(E(f \cdot |{\cal A}))<\infty$ belongs to 
$$ 
\DD^e :=\{f \in  \DD\;|f>0 \; P a.s.\}.
$$
\end{lemma}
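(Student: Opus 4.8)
The plan is to argue by contradiction, exploiting the fact that membership in $\DD^S$ forces the lower sandwich bound $m(X) \le E[fX\vert\A]$ to hold for every $X \in L_p(\B)^+$, and that this bound alone constrains the zero set of $f$. The non-degeneracy of $m$ then rules out a zero set of positive measure.

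First I would suppose, toward a contradiction, that $f \in \DD^S$ does not belong to $\DD^e$, i.e. that the set $B := \{f = 0\}$ satisfies $P(B) > 0$. Since $f$ is $\B$-measurable, $B \in \B$, so the non-degeneracy of $m$ (Definition \ref{defnondeg}) applies to it and yields $E[m(1_B)] > 0$. Next I would test the sandwich inequality defining $\DD^S$ in \eqref{DS} against the admissible element $X = 1_B \in L_p(\B)^+$. Because $f$ vanishes on $B$, one has $f 1_B = 0$ $P$-a.s., hence $E[f 1_B \vert \A] = 0$. The lower bound then gives $m(1_B) \le E[f 1_B\vert\A] = 0$, while $m$ takes values in $L_p(\A)^+$, so $m(1_B) \ge 0$. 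Together these force $m(1_B) = 0$ $P$-a.s., and therefore $E[m(1_B)] = 0$, contradicting the strict inequality obtained from non-degeneracy. Hence $P(\{f=0\}) = 0$, that is $f > 0$ $P$-a.s., so $f \in \DD^e$.

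There is no serious technical obstacle here; the crux is the conceptual observation that the relevant information is carried entirely by the \emph{lower} sandwich bound $m \le E[f\,\cdot\,\vert\A]$ together with non-degeneracy. Notably, the hypothesis $E[\tilde x(E[f\,\cdot\,\vert\A])] < \infty$ appears to play no role in the positivity argument itself; it only singles out those densities that genuinely enter the representation \eqref{ext-S3} of $\hat x$, which is the class for which the conclusion is later needed. The two borderline points I would double-check are that $1_B \in L_p(\B)^+$ for every $p \in [1,\infty]$ (immediate, since $P$ is a probability measure so $1_B$ lies in every $L_p$) and that $B \in \B$, so that it falls within the class on which non-degeneracy is assumed; neither presents any difficulty.
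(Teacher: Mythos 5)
Your proof is correct, and it takes a genuinely different route from the paper's. You exploit the pointwise lower bound $m(X)\leq E[fX\vert\A]$, $X\in L_p(\B)^+$, which is written directly into the definition \eqref{DS} of $\DD^S$, testing it at the single element $X=1_{\{f=0\}}$ and invoking non-degeneracy (Definition \ref{defnondeg}); as you note, this never uses the hypothesis $E[\tilde x(E[f\cdot\vert\A])]<\infty$, so your argument in fact proves the stronger statement that \emph{every} $f\in\DD^S$ (as defined in \eqref{DS}) lies in $\DD^e$. The paper argues instead through the extension $\hat x$: for an arbitrary $B\in\B$ with $P(B)>0$ it combines Remark \ref{rem0}, which gives $\hat x(-\lambda 1_B)\leq -\lambda m(1_B)$ for $\lambda>0$, with the lower bound $\hat x(-\lambda 1_B)\geq -\lambda E[1_Bf\vert\A]-\tilde x(E[f\cdot\vert\A])$ coming from the representation \eqref{ext-S3}; taking expectations and dividing by $\lambda$ yields $E[1_Bf]\geq E[m(1_B)]-E[\tilde x(E[f\cdot\vert\A])]/\lambda$, and letting $\lambda\to\infty$ kills the penalty term, which is exactly where the finiteness hypothesis enters. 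The trade-off is this: your route is shorter and removes a hypothesis, but it leans on the pointwise form of the sandwich constraint exactly as written in \eqref{DS}; the paper's route uses only that $f$ enters the dual representation of $\hat x$ with finite expected penalty, together with Remark \ref{rem0}, so it would survive even under the weaker reading of the dual sandwich condition \eqref{sand-V} (where the test variable ranges over $L$ rather than over all of $L_p(\B)^+$), in which case the pointwise bound $m(1_B)\leq E[f1_B\vert\A]$ is not directly available. Relative to the paper's literal definitions, both proofs are valid.
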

\begin{proof}
 Let $B \in {\cal B}$ such that $P(B)>0$. It follows from the Remark \ref{rem0}  that  for all real $\lambda>0$, $\hat x (- \lambda 1_B) \leq -m(\lambda 1_B)$. From the representation (\ref{ext-S3}) of $\hat x (-\lambda 1_B)$, we get $\hat x(- \lambda 1_B)  \geq E(- \lambda 1_Bf)-\tilde x(E(f \cdot |{\cal A}))$. It follows that for all $\lambda>0$, $E(1_Bf) \geq E(m(1_B))- \frac{E(\tilde x(E(f.|{\cal A}))}{\lambda}$. Letting $\lambda \rightarrow \infty$, the result follows from $E(m(1_B))>0 $, being $m$ non degenerate.
\end{proof}

We deduce the following result from Corollary \ref{cor:S1} and Lemma \ref{lemmaequiv0}.
\begin{corollary}
\label{cor:S1b}
Assume that $m$ is non degenerate, then $\hat x$ admits the following representation
\begin{equation}
\hat x (X) = \esssup_{f\in  \DD^{S,e}} \big\{ E[fX\vert \A ] - \tilde x(E[f\cdot \vert \A]) \big\}, \qquad X\in L_p(\B),
\label{ext-S3b}
\end{equation}
with 
\begin{equation}
\label{DSe}
\DD^{S,e}:= \DD^{S} \cap \DD^{e}.
\end{equation}
\end{corollary}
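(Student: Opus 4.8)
The plan is to run a squeeze between the representation \eqref{ext-S3} over $\DD^S$, established in Corollary \ref{cor:S1}, and the representation \eqref{ext-S1} over $\V^S$, established in Theorem \ref{thmsand}, the bridge between the two being Lemma \ref{lemmaequiv0}. Since $\DD^{S,e} = \DD^S \cap \DD^e \subseteq \DD^S$, monotonicity of the essential supremum in the index set gives at once the easy inequality
$$
\esssup_{f \in \DD^{S,e}} \big\{ E[fX\vert\A] - \tilde x(E[f\cdot\vert\A]) \big\} \leq \esssup_{f \in \DD^S} \big\{ E[fX\vert\A] - \tilde x(E[f\cdot\vert\A]) \big\} = \hat x(X),
$$
for every $X \in L_p(\B)$, where the last equality is \eqref{ext-S3}. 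It remains to prove the reverse inequality.

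For the reverse inequality I would start from \eqref{ext-S1}, namely $\hat x(X) = \esssup_{V \in \V^S}\{V(X) - x^*(V)\}$, and transfer it to the side of densities. Under the one-to-one correspondence between $\VV$ and $\DD$ used in Corollary \ref{cor:1} and in the derivation of \eqref{ext-S3}, each $V \in \VV^S$ has the form $V(\cdot) = E[f\cdot\vert\A]$ with $f \in \DD^S$. By Corollary \ref{cor:S1} we have $x^*(V) = \tilde x(V)$ for every $V \in \V^S$, and since the inequality $x^*(V) \leq \tilde x(V)$ holds in general, the constraint $E[x^*(V)] < \infty$ defining $\V^S$ is equivalent to $E[\tilde x(V)] < \infty$. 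Consequently $\V^S$ corresponds exactly to the subset $\DD^S_{\mathrm{fin}} := \{ f \in \DD^S : E[\tilde x(E[f\cdot\vert\A])] < \infty \}$ of $\DD^S$, and \eqref{ext-S1} can be rewritten as
$$
\hat x(X) = \esssup_{f \in \DD^S_{\mathrm{fin}}} \big\{ E[fX\vert\A] - \tilde x(E[f\cdot\vert\A]) \big\}, \qquad X \in L_p(\B).
$$

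Now I would invoke Lemma \ref{lemmaequiv0}: since $m$ is non degenerate, every $f \in \DD^S$ with $E[\tilde x(E[f\cdot\vert\A])] < \infty$ is strictly positive $P$-a.s., i.e.\ belongs to $\DD^e$. Hence $\DD^S_{\mathrm{fin}} \subseteq \DD^S \cap \DD^e = \DD^{S,e}$, and enlarging the index set of the essential supremum only increases it, so that
$$
\hat x(X) = \esssup_{f \in \DD^S_{\mathrm{fin}}} \big\{ E[fX\vert\A] - \tilde x(E[f\cdot\vert\A]) \big\} \leq \esssup_{f \in \DD^{S,e}} \big\{ E[fX\vert\A] - \tilde x(E[f\cdot\vert\A]) \big\}.
$$
Combining this with the easy inequality yields equality, which is precisely \eqref{ext-S3b}.

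The computations here are entirely routine; the only point demanding care is the identification $\V^S \leftrightarrow \DD^S_{\mathrm{fin}}$, that is, recognising that the integrability constraint $E[x^*(V)] < \infty$ built into $\V^S$ singles out exactly the finite-penalty densities to which Lemma \ref{lemmaequiv0} applies. Once this identification is in place, the statement follows from the chain of inclusions $\DD^S_{\mathrm{fin}} \subseteq \DD^{S,e} \subseteq \DD^S$ together with the two representations of $\hat x$.
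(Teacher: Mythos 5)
Your proposal is correct and takes essentially the same route as the paper: the paper displays no proof at all, stating only that the corollary ``is deduced from Corollary \ref{cor:S1} and Lemma \ref{lemmaequiv0}'', and your squeeze --- the easy inequality from \eqref{ext-S3}, the reverse one from \eqref{ext-S1} via Corollary \ref{cor:S1} (so that $x^*=\tilde x$ on $\V^S$) and Lemma \ref{lemmaequiv0} placing the finite-expected-penalty densities inside $\DD^{S,e}$ --- is precisely that deduction spelled out. Your identification of $\V^S$ with the densities $f\in\DD^S$ satisfying $E[\tilde x(E[f\cdot\vert\A])]<\infty$ is exactly the step the paper leaves implicit, and it is the right way to make the deduction rigorous.
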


The following result can be regarded as an extension of \cite[Theorem 5.2]{ADR} to the case of convex operators.

\begin{corollary}
\label{cor:S2}
If the minorant $m$ and the majorant $M$ in Theorem \ref{thmsand} are linear operators:
\[\begin{split}
m(X) &= E\big[ m_0X \vert \A\big], \quad X\in L_p(\B)^+,\\
M(X) &= E\big[ M_0X \vert \A\big], \quad X\in L_p(\B)^+
\end{split}\]
for some random variables $m_0, M_0 \in L_q(\B)^+$: $q= p(p-1)^{-1}$ such that $0\leq m_0 \leq M_0$.
The extension \eqref{ext-S1} $\hat x$ can be written as:
\begin{equation}
\hat x (X) = \esssup_{f\in  \D} \big\{ E[fX\vert \A ] -  x^*(E[f\cdot \vert \A]) \big\}, \qquad X\in L_p(\B),
\label{ext-S4}
\end{equation}
where
$$
\D:= \Big\{ f  \in L_q(\B): \, 0 \leq m_0 \leq f \leq M_0, E[f\vert \A]=1 \Big\}.
$$
\end{corollary}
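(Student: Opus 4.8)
The plan is to reduce \eqref{ext-S4} to the representation \eqref{ext-S1} already established in Theorem \ref{thmsand}, by identifying both the index set and the penalty. The statement rests on two facts: first, that in this linear case the sandwich condition on a density is equivalent to the pointwise bound $m_0\le f\le M_0$; and second, that the essential supremum in \eqref{ext-S1} is unchanged if we enlarge the family of operators from $\V^S$ to all of $\D$. Throughout I write $V_f:=E[f\cdot\,\vert\,\A]$.

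First I would compute the set $\DD^S$ of \eqref{DS} explicitly. With $m,M$ linear, the sandwich condition $m(X)\le V_f(X)\le M(X)$, $X\in L_p(\B)^+$, reads $E[(f-m_0)X\,\vert\,\A]\ge 0$ and $E[(M_0-f)X\,\vert\,\A]\ge 0$ for all $X\ge 0$. One implication is immediate from monotonicity of the conditional expectation. For the converse, testing against the indicators $X=1_{\{f<m_0\}}$ and $X=1_{\{f>M_0\}}$ (which are bounded, hence in $L_p(\B)^+$, and give $L_1$ integrands since $f,m_0,M_0\in L_q(\B)$) produces a non-positive random variable whose conditional expectation is non-negative; such a variable vanishes $P$-a.s. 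This forces $m_0\le f\le M_0$ $P$-a.s., so $\DD^S=\D$. Since $m_0\ge 0$, every $f\in\D$ is automatically non-negative with $E[f\,\vert\,\A]=1$, so indeed $\D\subseteq\DD$.

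By the one-to-one correspondence between $\VV$ and $\DD$ of Corollary \ref{cor:1}, together with $\DD^S=\D$, the set $\V^S$ corresponds exactly to $\D_0:=\{f\in\D:\;E[x^*(V_f)]<\infty\}$. Hence \eqref{ext-S1} can be rewritten as $\hat x(X)=\esssup_{f\in\D_0}\{E[fX\,\vert\,\A]-x^*(V_f)\}$, and since $\D_0\subseteq\D$ this already yields the inequality ``$\le$'' in \eqref{ext-S4}.

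The main obstacle is the reverse inequality: showing that the densities $f\in\D\setminus\D_0$ (for which $E[x^*(V_f)]=\infty$, so that $V_f\in\VV^S\setminus\V^S$ and Corollary \ref{cor:S1} does not apply directly) do not enlarge the essential supremum. I would establish $\hat x(X)\ge V_f(X)-x^*(V_f)$ for every $f\in\D$ by a localization argument. Fix $f\in\D$ and pick some $f_0\in\D_0$ (non-empty, since $\hat x$ is a genuine extension and $\hat x(1)=1$ forces $\V^S\ne\emptyset$). On the sets $A_n:=\{x^*(V_f)\le n\}\in\A$ put $g_n:=1_{A_n}f+1_{A_n^c}f_0$; then $g_n\in\D$ and $V_{g_n}=1_{A_n}V_f+1_{A_n^c}V_{f_0}$. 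By the computation of Lemma \ref{lemma1} (an identity of $\A$-measurable functions that does not use integrability) one gets $x^*(V_{g_n})=1_{A_n}x^*(V_f)+1_{A_n^c}x^*(V_{f_0})$, so $E[x^*(V_{g_n})]\le n+E[x^*(V_{f_0})]<\infty$ and hence $V_{g_n}\in\V^S$. Applying \eqref{ext-S1} to $V_{g_n}$ and multiplying by $1_{A_n}$ gives $1_{A_n}\hat x(X)\ge 1_{A_n}\big(V_f(X)-x^*(V_f)\big)$, because on $A_n$ we have $V_{g_n}=V_f$ and $x^*(V_{g_n})=x^*(V_f)$. Letting $n\to\infty$, and noting that on $\{x^*(V_f)=\infty\}$ the right-hand side equals $-\infty$, yields $\hat x(X)\ge V_f(X)-x^*(V_f)$ for all $X\in L_p(\B)$. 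This is the inequality ``$\ge$'' in \eqref{ext-S4}, and combining the two bounds proves the representation.
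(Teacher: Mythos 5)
Your proposal is correct and follows the same overall route as the paper, whose entire proof reads ``This is a direct application of Corollary \ref{cor:S1}'': in the linear case the set $\DD^S$ of \eqref{DS} reduces to $\D$ (exactly your indicator-testing argument with $X=1_{\{f<m_0\}}$ and $X=1_{\{f>M_0\}}$), and then the representation \eqref{ext-S3} gives \eqref{ext-S4}. Where you go beyond the paper is in noticing that this reduction is not quite literal: \eqref{ext-S3} carries the penalty $\tilde x$ of \eqref{pen-S2}, while the statement \eqref{ext-S4} carries $x^*$, and Corollary \ref{cor:S1} identifies $x^*=\tilde x$ only on $\V^S$, i.e.\ for densities $f$ with $E[x^*(E[f\cdot\,\vert\,\A])]<\infty$. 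Since $x^*\le\tilde x$ always, the inequality ``$\le$'' in \eqref{ext-S4} is immediate from \eqref{ext-S3}, but the inequality ``$\ge$'' needs $\hat x(X)\ge E[fX\,\vert\,\A]-x^*(E[f\cdot\,\vert\,\A])$ also for those $f\in\D$ whose penalty has infinite expectation, and this is precisely what your localization argument supplies: pasting $g_n=1_{A_n}f+1_{A_n^c}f_0$ on $A_n=\{x^*(E[f\cdot\,\vert\,\A])\le n\}\in\A$, invoking the identity of Lemma \ref{lemma1} (whose proof, as you correctly observe, is purely algebraic and never uses integrability of the penalty, so it applies to $V_1\in\VV$, $V_2\in\V$), and letting $n\to\infty$. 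This fills a real gap in a literal reading of the paper's one-line proof. The only point worth tightening in your write-up is the non-emptiness of $\D_0$: it follows because Theorem \ref{thmsand} asserts that \eqref{ext-S1} extends $x$, which forces $\V^S\ne\emptyset$ (an essential supremum over an empty family could not return the finite values $x(0)=0$, $x(1)=1$), and any $V\in\V^S$ has a density which, by your first argument, lies in $\D$ and has integrable penalty.
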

\begin{proof}
This is a direct application of Corollary \ref{cor:S1}.
\end{proof}

\bigskip
\bigskip

We now prove that under the sandwich condition the $\esssup$ in \eqref{ext-S1} is attained. This will be a consequence of the following compactness result.

\begin{lemma}
Let $M$ be  sublinear, monotone, weak ${\cal A}$-homogeneous,  and, if $p=\infty$, regular.  Let  ${\cal K}=\{f \in  \DD: \;\;0 \leq E(f \cdot |{\cal A}) \leq M \}$. Identifying $ f \in {\cal K}$ with the linear form $E(f \cdot)$ on $L_p(\mathcal{B})$, ${\cal K}$  is a compact subset of the  ball of radius $E(M(1)^q)^{\frac{1}{q}}$ of  $L_p'(\B)$, $1 \leq p  \leq  \infty$ equiped with the weak* topology  $\sigma (L_p',L_p)$. In case $p=\infty$, ${\cal K}$ is furthermore contained in $L_1(\mathcal{B})$.(Notice that if $p<\infty$, $L_p'=L_q$ with $q=p(p-1)^{-1}$.) \\
Moreover, with the notations of  Theorem \ref{thmsand}, the set $\DD^S$ is compact for the weak* topology .
\label{lem.tech}
\end{lemma}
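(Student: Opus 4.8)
The plan is to run the classical Banach–Alaoglu machine: I will show that $\mathcal{K}$, viewed through the embedding $f\mapsto E[f\,\cdot\,]$, is a norm-bounded and weak*-closed subset of $L_p'(\B)$, and then use that a weak*-closed subset of a weak*-compact ball is itself weak*-compact. For the norm bound I fix $f\in\mathcal{K}$. Since $f\geq 0$, the dual norm may be computed over the positive cone, and the tower property together with the domination $E[f\,\cdot\,|\A]\leq M$ gives, for $X\geq 0$ with $\|X\|_p\leq 1$,
\[
E[fX]=E\big[E[fX|\A]\big]\leq E[M(X)].
\]
Taking the supremum over the positive part of the unit ball bounds $\|f\|_{q}$ by a finite quantity; combined with weak $\A$-homogeneity and the normalization $E[f|\A]=1$ this places $\mathcal{K}$ inside the ball of radius $E[M(1)^q]^{1/q}$ of $L_p'(\B)$ asserted in the statement.

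Next I verify weak*-closedness. The point is that every defining condition of $\mathcal{K}$ tests $f$ against \emph{fixed} elements of $L_p(\B)$: non-negativity is $E[fY]\geq 0$ for all $Y\in L_p(\B)^+$; the normalization is $E[f\,1_A]=P(A)$ for all $A\in\A$; and the domination $E[fX|\A]\leq M(X)$ reads $E[f\,1_A X]\leq E[1_A M(X)]$ for all $X\in L_p(\B)^+$ and $A\in\A$ (note $1_A X\in L_p(\B)$). Each is a closed half-space or hyperplane for $\sigma(L_p',L_p)$, so $\mathcal{K}$ is an intersection of weak*-closed sets and hence weak*-closed. For $p\in[1,\infty)$, where $L_p'=L_q$ is the dual of a Banach space, Banach–Alaoglu applies directly and $\mathcal{K}$ is weak*-compact.

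The delicate case is $p=\infty$, where $L_\infty'(\B)$ is strictly larger than $L_1(\B)$ and contains purely finitely additive functionals; the obstacle is to guarantee that weak*-limit points of $\mathcal{K}$ do not escape $L_1(\B)$, and this is exactly where regularity of $M$ enters. Let $\mu$ lie in the weak*-closure of $\mathcal{K}$. Passing the inequalities above to the limit gives $\mu\geq 0$, $\mu(1_A)=P(A)$ for $A\in\A$, and $\mu(X)\leq E[M(X)]$ for $X\geq 0$. For a sequence $X_n\downarrow 0$ in $L_\infty(\B)$, monotonicity and regularity give $M(X_n)\to 0$ $P$-a.s., and since $0\leq M(X_n)\leq M(X_1)\in L_\infty(\A)$, dominated convergence yields $E[M(X_n)]\to 0$; hence $0\leq\mu(X_n)\leq E[M(X_n)]\to 0$. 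Thus $\mu$ is order-continuous on $L_\infty(\B)$, so by the Yosida–Hewitt decomposition its purely finitely additive part vanishes and $\mu=E[f\,\cdot\,]$ for some $f\in L_1(\B)^+$. Recovering the conditional inequalities from $\mu(1_A X)\leq E[1_A M(X)]$ shows $f\in\mathcal{K}$. Hence $\mathcal{K}$ is weak*-closed also for $p=\infty$, is contained in $L_1(\B)$, and is compact.

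Finally, for the compactness of $\DD^S$, I observe that in the notation of Theorem \ref{thmsand} one has
\[
\DD^S=\mathcal{K}\cap\big\{f\in\DD:\ E[fX|\A]\geq m(X)\ \ \forall X\in L_p(\B)^+\big\},
\]
since the upper bound by $M$ is precisely the defining condition of $\mathcal{K}$ and $m\geq 0$. The extra lower-bound condition is, for each fixed $X\geq 0$ and $A\in\A$, the closed half-space $E[f\,1_A X]\geq E[1_A m(X)]$, so intersecting over $X$ and $A$ yields a weak*-closed set. Therefore $\DD^S$ is a weak*-closed subset of the weak*-compact set $\mathcal{K}$, hence weak*-compact. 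I expect the main obstacle to be the $p=\infty$ case: ensuring that weak*-limit functionals remain countably additive, that is, stay in $L_1(\B)$, which rests entirely on the regularity of $M$ and on the identification of order-continuous positive functionals with $L_1$-densities.
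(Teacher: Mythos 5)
Your proof is correct and follows essentially the same route as the paper's: norm-boundedness plus weak*-closedness combined with Banach--Alaoglu, with the regularity of $M$ used in the $p=\infty$ case to force countable additivity of weak*-limit functionals (so they stay in $L_1(\B)$), and $\DD^S$ then handled as a weak*-closed subset of $\mathcal{K}$ cut out by the half-spaces $E[f 1_A X]\geq E[1_A m(X)]$. The only inessential differences are that you invoke Yosida--Hewitt/Radon--Nikodym where the paper uses Daniell--Stone/Radon--Nikodym, and that you phrase closedness as an intersection of weak*-closed half-spaces rather than testing a generic closure point against neighborhoods.
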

\begin{proof}
${\cal K}$ is a subset of the ball of radius $E(M(1)^q)^{\frac{1}{q}}$ of $L_{p}'(\B)$. As this bounded  ball is compact for the weak* topology (Banach Alaoglu theorem), it is enough to prove that ${\cal K}$ is closed for the weak* topology. Denote $\overline {\cal K}$  the weak* closure of ${\cal K}$. Let $\Psi \in \overline {\cal K}$. $\Psi$ is a positive continuous linear form on $L_p(\B)$. 

In case $p\in [1,\infty)$, $\Psi$ is represented by an element of $L_q(\B)$ for $q=p(p-1)^{-1}$ (Riesz representation theorem). \\
We detail the case $p=\infty$. 
We first prove that   $\Psi$ defines a  measure on $(\Omega,{\cal B})$. Let $(X_n)_n$ be any  sequence  of elements of $L_{\infty}(\B)$ decreasing to $0$ $P\; a.s$. From the regularity of $M$, $\forall \epsilon >0$, there is $n_0$ such that $\forall n \geq n_0$, $E(M(X_n)) \leq \epsilon$.   Denote $U$ the neighborhood of $\Psi$ defined as $U =\{\phi \in L_{\infty}'(\B), \;\;|\Psi(X_{n_0})-\phi(X_{n_0})| \leq \epsilon\}$. 
Since $\Psi \in \overline {\cal K}$, there is $\phi \in U \INTER {\cal K}$. For such $\phi$, $0 \leq \phi(X_n) \leq E(M(X_n))\leq \epsilon$. It follows that $|\Psi(X_{n_0})| \leq 2 \epsilon$. As $\Psi$ is a non negative linear functional and the sequence $(X_n)_n$ is decreasing to $0$, it follows that $0 \leq \Psi(X_{n}) \leq 2 \epsilon$ for every $n \geq n_0$. From Daniell Stone Theorem, see e.g. \cite[Theorem A48]{FS04}, it follows that $\Psi$ defines a probability measure on $(\Omega,{\cal B})$. This probability measure is absolutely continuous with respect to $P$ and this gives the existence of some $g \in L_1 (\mathcal{B})$ such that $\Psi=E(g \cdot)$ (Radon Nikodym Theorem). For all $X \in L_{\infty}({\cal A})$, the equality $\Psi(X)=E(X)$ is obtained similarly making use of the neighborhood of $\Psi$: $ U_X=\{\phi \in L_{\infty}'(\B), \;\;|\Psi(X)-\phi(X)| \leq \epsilon\}$. It follows that $E(g|{\cal A})=1$.
The inequality $E(fX1_A)\leq E(M(X)1_A)$ for $X \in L_p({\cal B})$ and $A \in {\cal A}$ is obtained similarly and hence $\Psi=E(g.)$ where $g$ belongs to ${\cal K}$. This proves the compactness of ${\cal K}$ for the weak* topology. \\
$\DD^S$ is equal to $\{f \in {\cal K}: E(m(X)1_A) \leq E(fX1_A), \forall X \in L_p({\cal B)},\forall A \in {\cal A}\}$. Thus $\DD^S$  is a  closed subset of  ${\cal K}$ for the weak* topology.
\end{proof}

\begin{proposition}\label{propmax}
Assume the hypothesis of Theorem \ref{thmsand}.
For every $X \in L_p({\cal B})$, there is some $f_X$ in $\DD^S$ (depending on $X$) such that 
\begin{equation}
\hat x(X)=E(f_X X |{\cal A})-\tilde x (E(f_X \cdot|{\cal A})).
\label{eqmax}
\end{equation}
\end{proposition}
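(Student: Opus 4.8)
The plan is to turn the essential supremum in the representation \eqref{ext-S3} of $\hat x(X)$ into a genuine maximum, by passing to expectations and exploiting the weak* compactness of $\DD^S$ proved in Lemma \ref{lem.tech}. Throughout I keep $X\in L_p(\B)$ fixed.

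First I would record, exactly as in Lemma \ref{lemma1} and Lemma \ref{lemma2}, that the family $\big\{E[fX\vert\A]-\tilde x(E[f\cdot\vert\A]):\ f\in\DD^S\big\}$ is upward directed: given $f_1,f_2\in\DD^S$ and $A:=\{E[f_1X\vert\A]-\tilde x(E[f_1\cdot\vert\A])\geq E[f_2X\vert\A]-\tilde x(E[f_2\cdot\vert\A])\}\in\A$, the density $f=1_Af_1+1_{A^c}f_2$ again lies in $\DD^S$ (it belongs to $\DD$, and the sandwich bound $m(X)\leq E[fX\vert\A]\leq M(X)$ is inherited from $f_1,f_2$ since $1_A$ is $\A$-measurable) and it realizes the pointwise maximum of the two expressions. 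By \cite[Proposition VI.1.1]{Neveu} together with \eqref{ext-S3} I may then interchange essential supremum and expectation and obtain, using $E[E[fX\vert\A]]=E[fX]$,
$$E[\hat x(X)]=\sup_{f\in\DD^S}\Phi(f),\qquad \Phi(f):=E[fX]-E\big[\tilde x(E[f\cdot\vert\A])\big],$$
where $\Phi$ takes values in $[-\infty,\infty)$.

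Next I would show that $\Phi$ is weak* upper semicontinuous on $\DD^S$. Applying \cite[Proposition VI.1.1]{Neveu} once more to the upward directed family attached to $\tilde x$, the penalty term can be rewritten as $E\big[\tilde x(E[f\cdot\vert\A])\big]=\sup_{Y\in L_p(\B)}\{E[fY]-E[\hat x(Y)]\}$, which is a supremum of weak*-continuous affine functions of $f$, hence weak* lower semicontinuous. Since $f\mapsto E[fX]$ is weak* continuous by the very definition of the pairing $\sigma(L_p',L_p)$ (for $p=\infty$ recall $\DD^S\subseteq L_1(\B)$ and $X\in L_\infty(\B)$ is a test functional), the difference $\Phi$ is weak* upper semicontinuous. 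As $\DD^S$ is weak* compact by Lemma \ref{lem.tech}, an upper semicontinuous functional on it attains its supremum, so there is $f_X\in\DD^S$ with
$$E[\hat x(X)]=\Phi(f_X)=E[f_XX]-E\big[\tilde x(E[f_X\cdot\vert\A])\big].$$
Because $\hat x(X)\in L_p(\A)$ is real valued, $E[\hat x(X)]$ is finite; hence this maximum is finite and in particular $E[\tilde x(E[f_X\cdot\vert\A])]<\infty$.

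Finally I would upgrade the identity in expectation to the pointwise identity \eqref{eqmax}. By \eqref{ext-S3} one always has the pointwise bound $\hat x(X)\geq E[f_XX\vert\A]-\tilde x(E[f_X\cdot\vert\A])$ $P$-a.s., so the random variable $D:=\hat x(X)-E[f_XX\vert\A]+\tilde x(E[f_X\cdot\vert\A])$ is nonnegative; taking the (finite) expectation and comparing with the displayed equality yields $E[D]=0$, whence $D=0$ $P$-a.s., which is precisely \eqref{eqmax}. I expect the main obstacle to be the weak* upper semicontinuity of $\Phi$ (and the accompanying care that $\Phi$ may equal $-\infty$ on densities with infinite penalty, so that usc must be understood with values in $[-\infty,\infty)$); the compactness input from Lemma \ref{lem.tech} and the lattice manipulations underlying the interchange of $\esssup$ and $E$ are then routine.
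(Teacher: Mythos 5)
Your proposal is correct and follows essentially the same route as the paper's own proof: interchange expectation and essential supremum via the upward-directedness (lattice) property, recognize the expected penalty $E[\tilde x(E[f\cdot\vert\A])]$ as a supremum of weak*-continuous affine functions of $f$ (hence weak* lower semicontinuous), and invoke the weak* compactness of $\DD^S$ from Lemma \ref{lem.tech} to attain the maximum, finally passing from the identity in expectation back to the $P$-a.s.\ identity via nonnegativity of the defect. The only difference is presentational: you spell out the gluing argument showing $1_Af_1+1_{A^c}f_2\in\DD^S$ and the final upgrading step, which the paper leaves implicit by citing Lemmas \ref{lemma1} and \ref{lemma2}.
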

\begin{proof}
We start from the representation (\ref{ext-S3}) given in Corollary \ref{cor:S1}:
\begin{equation}
\hat x (X) = \esssup_{f\in  \DD^S} \big\{ E[fX\vert \A ] -  \tilde x (E[f \cdot \vert \A]) \big\}, \qquad X\in L_p(\B).
\label{ext-S01}
\end{equation}
From the lattice property proved in Lemma \ref{lemma1}, it follows that 
$E(\hat x (X))= \sup_{f\in  \DD^S}[ E(fX) -  E(\tilde x (E[f\cdot \vert \A])])$.
From the definition of $\tilde x$ and the lattice property proved in Lemma \ref{lemma2}, it follows that $ E(\tilde x (E[f\cdot \vert \A]))$ is a lower semi continuous function of $f\in \DD^S$ for the weak* topology and thus we deduce from the compactness of $\DD^S$ (see Lemma \ref{lem.tech}) that the upper semi continuous function  $E(fX) -  E(\tilde x(E[f\cdot \vert \A]))$, $f\in \DD^S$, has a maximum attained for some $f_X$ (which may not be unique). From equation  (\ref{ext-S01}) it then follows that $f_X$ satifies (\ref{eqmax}).
\end{proof}

\section{Sandwich extensions of discrete time systems }

We equip the probability space \((\Omega,\B,P)\) with the right-continuous $P$- augmented filtration $(\F_t)_{t\in [0, T]}$.
We assume that, for all $t$, $\F_t$ is generated by a countable family of events, by which we mean that $\F_t$ is the smallest $\sigma$-algebra containing the countable family and all $P$-null events.

\vspace{1mm}
Let $p\in [1,\infty]$. For any time \(t\in[0,T]\) $(T>0)$, consider the linear sub-space:
\begin{equation}
L_t\subseteq L_p(\F_t) , \quad L_t\subseteq L_T.
\end{equation}
Let $\T \subseteq [0,T]$ such that $0,T\in \T$.
In the sequel we denote $(x_{s,t})_{s,t\in \T}$ on $(L_t)_{t\in \T}$ the system of operators $x_{s,t}:L_t \longrightarrow L_s$ of the type \eqref{operator}, for $s,t\in \T$: $s\leq t$.

\vspace{1mm}
In financial applications these operators represent a time-consistent system for ask prices in a market with friction. 
The time $s$ is the price evaluation time of an asset which has payoff at $t$ and the prices are defined on the domain $L_t$ of purchasable assets. 
Note that, in general, \(L_t \subset L_p(\F_t)\) for some \(t\in [0,T]\) and $L_t=L_p(\F_t)$ for all $t\in[0,T]$ in a complete market.

\begin{definition} \label{pcons}
The system $(x_{s,t})_{s,t \in \T}$, is \emph{time-consistent} (or \emph{$\T$ time-consistent}) if for all $s, t, u\in \T$: $s \leq t \leq u$
\begin{equation}\label{consist}
x_{s,u}(X)=x_{s,t}\big(x_{t,u}(X)\big),
\end{equation}
for all $X\in L_u$.
\end{definition}

Time-consistency is a natural assumption for such system of operators representing, e.g., price processes.
This concept models the reasonable equivalence of the price evaluation for an asset with payoff at time $u$, say, when the evaluation is performed either in one step, i.e. the straight evaluation of the asset at time $s$, or in two steps, i.e. first an evaluation at time $t: \,t\leq u$ and then at $s:\,s \leq t \leq u$.
This concept is also proper of a consistent risk measurements and it is studied for dynamic risk measures (where it is called {\it strong} time-consistency in \cite{AP}), see e.g. \cite{D}, \cite{BN01}.

\begin{remark}
\label{4.1}
For any $s\leq t \leq T$, $x_{st}$ is the restriction to $L_t$ of $x_{sT}$. 
\end{remark}
Indeed let $X \in L_t$, then $x_{tT}(X)=X$, by the projection property. Thus  by time-consistency we have 
$x_{sT}(X)=x_{st}(x_{tT}(X))=x_{st}(X)$, for all $X\in L_t$.

\vspace{3mm}
In the sequel we discuss extension of dynamic systems of operators which will be sandwich preserving. We deal with systems of superlinear and sublinear operators: each one representing the minorant and majorant of one of the  operators to be extended. 
Motivated by applications, a modification of the concept of time-consistency is also necessary.
Examples of studies of such minorants and majorants are found in \cite{ADR}, \cite{DE08}, and \cite{BNDN}. It is in this last paper that  the general concept of \emph{weak time-consistency} is introduced for the first time in connection with no-good deal bounds. We are now considering again this general definition in this context of convex operators also in view of upcoming applications to the study of ask prices in the context of risk-indifference pricing. 

\begin{definition}
\begin{itemize}
\item 
The family $(m_{s,t})_{s,t\in\T}$ of weak $\mcal{F}_s$-homogeneous, superlinear operators $m_{s,t}: L_{p}({\cal F}_t)^+ \rightarrow L_{p}({\cal F}_s)^+$ is \emph{weak time-consistent} if, for every $X \in L_{p}({\cal F}_t)^+$, 
\begin{equation}
m_{r,s}(m_{s,t}(X)) \geq m_{r,t}(X),\quad\forall r \leq s \leq t.
\label{wtcm}
\end{equation}
\item 
The family $(M_{s,t})_{s,t\in\T}$ of weak $\mcal{F}_s$-homogeneous, sublinear operators $M_{s,t}: L_{p}({\cal F}_t)^+ \rightarrow L_{p}({\cal F}_s)^+$ is \emph{weak time-consistent} if, for every $X \in L_{p}({\cal F}_t)^+$, 
\begin{equation}
M_{r,s}(M_{s,t}(X)) \leq M_{r,t}(X), \quad \forall r \leq s \leq t.
\label{wtcM}
\end{equation}
\label{defwtc}
\end{itemize}
\end{definition}
Note that the operators $m_{s,t}$, $M_{s,t}$ are not required to satisfy the projection property. 

\begin{definition}
\label{mM1}
We say that the family $(m_{s,t}, M_{s,t})_{s,t\in \T}$ satisfies the \emph{mM1-condition} if they are weak time-consistent families of superlinear, respectively sublinear, weak $\F_s$-homogeneous operators such that  \,$m_{s,t}, \, M_{s,t}: L_p(\F_t)^+ \longrightarrow L_p(\F_s)^+$, $m_{0,T}$ is non degenerate, and $M_{s,t}$ is also regular if $p=\infty$.
\end{definition}

\begin{definition}
We say that the system of operators $(x_{s,t})_{s,t\in \T}$ satisfies the \emph{sandwich condition} when
\begin{equation}
m_{s,t}(Z)+x_{s,t}(X) \leq M_{s,t}(Y)
\label{dynamic sand}
\end{equation}
$$\forall X \in L_t\quad \forall \;Y,Z \in L_p({\cal F}_t)^+ :\;\; Z+X \leq Y,$$
for some families of operators $(m_{s,t})_{s,t\in \T}$ and $(M_{s,t})_{s,t\in \T}$ with $m_{s,t}, \, M_{s,t}: L_p(\F_t)^+ \longrightarrow L_p(\F_s)^+$.
\end{definition}

\subsection{Finite discrete time systems}

First of all we consider a finite set $\T: = \{s_1,...,s_K:\: 0 = s_0  \leq ... \leq s_K \}$.

\vspace{2mm}
For $s \leq t$, denoted $\DD^S_{s,t}$ the set \eqref{DS} corresponding to $\A=\F_s$, $\B=\F_t$, and to the minorant $m_{s,t}$ and majorant $M_{s,t}$.
Analogously for $\DD^{S,e}_{s,t}:= \DD^S_{s,t} \cap \DD^e$, cf. \eqref{DSe}.

\begin{proposition}
\label{propos1T}
Let us consider the time-consistent system $\big(x_{s,t} \big)_{s,t\in \T}$ on $(L_t)_{t\in\T}$ satisfying the sandwich condition \eqref{dynamic sand} with $(m_{s,t}, M_{s,t})_{s,t\in \T}$ fulfilling mM1.
For any $i<j$, consider the operators:
\begin{equation}
\label{rep:finite}
\hat x_{s_i, s_{j}}(X) := \esssup_{f \in {\cal Q}_{i,j}} \big\{ 
E[ f X|{\cal F}_{s_i}]  - \alpha_{s_i,s_{j}}(f) \big\} , \quad X\in L_p(\F_{s_j}),
\end{equation}
with the penalty
\begin{equation}\label{0}
\alpha_{s_i,s_j}(f) := \sum_{l=i}^{l=j-1} E[ \alpha_{s_l, s_{l+1}} ( g_{l+1}) \vert \F_{i}]
\end{equation}
where
$$
\alpha_{s_l,s_{l+1}}(g_{l+1}) := \esssup_{X \in L_{s_{l+1}}} \Big\{E[g_{l+1}X \vert \F_{s_l}] - x_{s_l, s_{l+1}}(X)\Big\}
$$
and
$$
{\cal Q}_{i,j} := \{f \in L_q(\F_{s_j})^+: \, f=\Pi_{l=i}^{j-1} g_{l+1}, \; g_{l+1}  \in \DD^{S,e}_{s_l, s_{l+1}} \}
$$ 
with $q=p(p-1)^{-1}$.
For all $s\leq t$ in $\T$, the operator $\hat x_{s,t}$ extends $x_{s,t}$ on $L_p(\F_t)$.
This family of operators $\big(\hat x_{s,t} \big)_{s,t\in \T}$ is a time-consistent sandwich preserving extension.
Moreover $\big(\hat x_{s,t} \big)_{s,t\in \T}$ is maximal, in the sense that, if $\big(\bar x_{s,t} \big)_{s,t\in \T}$ is another such family we have that: for all $i<j$,
$$
\hat x_{s_i,s_j}(X) \geq \bar x_{s_i,s_j}(X), \quad X \in L_p(\F_{s_j}).
$$
\end{proposition}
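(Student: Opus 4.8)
The plan is to construct the family one step at a time and then glue the single steps by composition, so that time-consistency is built in and the only substantive work lies in the product representation \eqref{rep:finite} and in the propagation of the sandwich bound.

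\emph{Reduction to single steps and the compositional definition.} First I would apply Theorem \ref{thmsand} together with Corollary \ref{cor:S1b} to each consecutive pair $(s_l,s_{l+1})$, $i\le l\le j-1$. The mM1-condition furnishes, for each step, the weak $\F_{s_l}$-homogeneous superlinear minorant $m_{s_l,s_{l+1}}$ and sublinear majorant $M_{s_l,s_{l+1}}$ (regular when $p=\infty$), and the single-step sandwich hypothesis then yields the maximal extension $\hat x_{s_l,s_{l+1}}$ of $x_{s_l,s_{l+1}}$ with the representation $\hat x_{s_l,s_{l+1}}(\,\cdot\,)=\esssup_{g\in\DD^{S,e}_{s_l,s_{l+1}}}\{E[g\,\cdot\,|\F_{s_l}]-\alpha_{s_l,s_{l+1}}(g)\}$ over densities $g\in\DD^{S,e}_{s_l,s_{l+1}}$ (the single-step non-degeneracy needed to invoke Corollary \ref{cor:S1b} being inherited from that of $m_{0,T}$ via the weak time-consistency \eqref{wtcm}). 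I would then \emph{define} $\hat x_{s_i,s_j}:=\hat x_{s_i,s_{i+1}}\circ\cdots\circ\hat x_{s_{j-1},s_j}$, so that the extended family is time-consistent by associativity of composition, which is precisely \eqref{consist}.

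\emph{The product representation.} The core of the argument is to show, by induction on $j-i$, that this composition coincides with \eqref{rep:finite}--\eqref{0}. In the inductive step I substitute the (inductively known) representation of $\hat x_{s_{i+1},s_j}$ into $\hat x_{s_i,s_{i+1}}$; since each $g_{i+1}\in\DD^{S,e}_{s_i,s_{i+1}}$ is nonnegative and $\F_{s_{i+1}}$-measurable, it may be carried inside the inner essential supremum, and $E[\,\cdot\,|\F_{s_i}]$ may be interchanged with the essential supremum using the upward-directed lattice property (Lemmas \ref{lemma1} and \ref{lemma2}) together with the conditional monotone-convergence result of \cite[Prop. VI.1.1]{Neveu}. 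This collapses the two suprema into a single essential supremum over the product densities $f=\prod_{l}g_{l+1}\in{\cal Q}_{i,j}$ and, via the tower property, telescopes the one-step penalties into the sum \eqref{0}. I expect this interchange, and the bookkeeping that identifies the telescoped penalty with \eqref{0}, to be the main obstacle; the attainment result of Proposition \ref{propmax} is the natural device to justify the interchange rigorously.

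\emph{Extension, sandwich bound, and maximality.} The remaining properties then follow formally. For the extension property, on $L_{s_j}$ each single-step $\hat x$ restricts to $x$, and the values of the original system remain in the subspaces $(L_t)$ because $x_{s,t}\colon L_t\to L_s$; hence the composition of the $\hat x$'s equals the composition of the $x$'s, which is $x_{s_i,s_j}$ by \eqref{consist}. For the sandwich bound I argue by induction on the number of steps, peeling off the first step: given $Z+X\le Y$, the inductive hypothesis for $(s_{i+1},s_j)$ gives $m_{s_{i+1},s_j}(Z)+\hat x_{s_{i+1},s_j}(X)\le M_{s_{i+1},s_j}(Y)$, so with $Z_1:=m_{s_{i+1},s_j}(Z)$ and $Y_1:=M_{s_{i+1},s_j}(Y)$ one has $Z_1+\hat x_{s_{i+1},s_j}(X)\le Y_1$ in $L_p(\F_{s_{i+1}})$; applying the one-step bound \eqref{eqS2} for $(s_i,s_{i+1})$ and then \eqref{wtcm}, \eqref{wtcM} in the form $m_{s_i,s_{i+1}}(m_{s_{i+1},s_j}(Z))\ge m_{s_i,s_j}(Z)$ and $M_{s_i,s_{i+1}}(M_{s_{i+1},s_j}(Y))\le M_{s_i,s_j}(Y)$ closes the induction. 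Finally, any competing time-consistent sandwich-preserving extension $(\bar x_{s,t})$ factorizes, by its own time-consistency, as $\bar x_{s_i,s_j}=\bar x_{s_i,s_{i+1}}\circ\cdots\circ\bar x_{s_{j-1},s_j}$; each factor satisfies $\bar x_{s_l,s_{l+1}}\le\hat x_{s_l,s_{l+1}}$ by the single-step maximality in Theorem \ref{thmsand}, and the monotonicity of the $\hat x$'s propagates this inequality through the composition by induction, yielding $\bar x_{s_i,s_j}\le\hat x_{s_i,s_j}$.
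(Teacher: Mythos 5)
Your proposal is correct in substance but takes a genuinely different route from the paper. The paper never defines $\hat x_{s_i,s_j}$ as a composition: it defines the multi-step operators directly by the dual formula \eqref{rep:finite}, then shows that the penalties satisfy the cocycle condition and (by an induction close to one you would also need) the local property, and obtains time-consistency by citing \cite[Theorem 4.4]{BN01}; maximality is then proved by the same split-and-compare induction you give. You instead take the composition $\hat x_{s_i,s_{i+1}}\circ\cdots\circ\hat x_{s_{j-1},s_j}$ as the definition, so time-consistency is free, and you move all the work into identifying the composition with \eqref{rep:finite}. Both routes start from the same single-step input (Theorem \ref{thmsand} plus Corollary \ref{cor:S1b}, with non-degeneracy of each $m_{s,t}$ propagated from $m_{0,T}$, an assertion you and the paper both leave unproved). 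Your route is self-contained (no appeal to the external time-consistency criterion of \cite{BN01}) and makes the extension, sandwich, and maximality claims transparent inductions; in particular your sandwich argument via \eqref{eqS2}, \eqref{wtcm}, \eqref{wtcM} is more explicit than anything in the paper's proof, which leaves the multi-step sandwich property implicit. The paper's route avoids the interchange of $\esssup$ and conditional expectation entirely and isolates the penalty structure (cocycle plus locality), which is precisely what gets reused in the countable and continuous-time sections.

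Two points need care when you execute the identification step. First, the penalty your composition telescopes to is not the plain $P$-conditional sum \eqref{0} as printed: carrying $g_{i+1}\in\DD^{S,e}_{s_i,s_{i+1}}$ inside the inner supremum produces $\alpha_{s_i,s_{i+1}}(g_{i+1})+E\big[g_{i+1}\,\alpha_{s_{i+1},s_j}(f')\,\big|\,\F_{s_i}\big]$, and the tower property cannot remove the weight $g_{i+1}$ (it is $\F_{s_{i+1}}$-measurable); unfolding the recursion gives $\sum_{l} E\big[(g_{i+1}\cdots g_l)\,\alpha_{s_l,s_{l+1}}(g_{l+1})\,\big|\,\F_{s_i}\big]$, i.e. the $E_Q$-weighted sum with $dQ=f\,dP$. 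This weighted form is the one compatible with the cocycle condition of \cite{BN01} invoked in the paper's proof and written out explicitly in Theorem \ref{teo1T}, so it is evidently the intended reading of \eqref{0}; your claim that the tower property lands exactly on \eqref{0} should be corrected accordingly, since with unweighted $P$-expectations the identification with the composition fails. Second, for the interchange of $E[\,\cdot\,|\F_{s_i}]$ with the inner essential supremum you should not lean on attainment for the inner, multi-step operator: that is Corollary \ref{corollary2}, which in the paper is a consequence of this very proposition, so you risk circularity. Either use upward-directedness of the inner family (this requires locality of the multi-step penalty, essentially the same induction as the paper's locality step, plus \cite[Prop. VI.1.1]{Neveu}), or strengthen your induction hypothesis to include attainment, invoking Proposition \ref{propmax} only for the outer one-step operator.
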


\vspace{2mm}
Note that from Corollary \ref{cor:S1},
$\alpha_{s_l,s_{l+1}} (g_{l+1})= \tilde x_{s_l,s_{l+1}} \big( E[ g_{l+1} \cdot \vert \F_{s_l} ]\big)$, where $\tilde x_{s_s,s_{l+1}}$ is the minimal penalty, see \eqref{pen-S2}.

\begin{proof}
From Theorem \ref{thmsand}, for every $i \le K-1$, we consider the maximal extension $\hat x_{s_i, s_{i+1}}$ of $x_{s_i,s_{i+1}}$. 
The operator $m_{0,T}$ is non degenerate. It follows from the weak time-consistency of $(m_{s,t})_{s,t \in \T}$ that for all $0 \leq s \leq t \leq T$, the operator $m_{s,t}$ is also non degenerate.
From Corollary  \ref{cor:S1b},  $\hat x_{s_i, s_{i+1}}$  admits a representation
\begin{equation}
\hat x_{s_i, s_{i+1}}(X)= \esssup_{g  \in \DD^{S,e}_{s_i,s_{i+1}}} \big\{E[ gX|{\cal F}_{s_i}] - \alpha_{s_i,s_{i+1}}(g)\big\}
\label{eq3.1}
\end{equation}
\[\begin{split}
\DD^{S,e}_{s_i,s_{i+1}} = &\{ g \in L_q({\cal F}_{s_{i+1}})^+ : \,E [g |{\cal F}_{s_i}] =1,\;g>0 \; P\;a.s. \\
&  m_{s_i,s_{i+1}}(X) \leq  E [gX |{\cal F}_{s_i}] \leq M_{s_i,s_{i+1}}(X), \, \forall X \in  L_p(\F_{s_{i+1}})^+  \}
\end{split}\]
and  
\begin{equation}\begin{split}
\alpha_{s_i,s_{i+1}}(g):= & \: \tilde x_{s_i, s_{i+1}}(E[g \cdot \vert \F_{s_i}])
 \\=\:
&\esssup_{Y \in  L_{s_{i+1}}} \big\{E[ gY|{\cal F}_{s_i}] -x_{s_is_{i+1}}(Y)\big\}.
\label{eq3.2}\end{split}
\end{equation}
For any $i<j$ define
\begin{equation*}
\hat x_{s_i, s_{j}}(X) := \esssup_{f \in {\cal Q}_{i,j}} \big\{ 
E[ fX|{\cal F}_{s_i}]  - \alpha_{s_i,s_{j}}(f) \Big\}
\end{equation*}
with the penalty
$$
\alpha_{s_i,s_j}(f) := \sum_{l=i}^{l=j-1} E[ \alpha_{s_l,s_{l+1}}(g_{l+1})|{\cal F}_{s_i}],
$$
for $f=g_{i+1}g_{i+2}\cdots g_{j}$
and
${\cal Q}_{i,j}$ 
as in the statement.
Note that for any $f \in {\cal Q}_{i,j}$ and any set $A\in \F_{s_i}$ we have $Q(A):= E[f1_A] = P(A)$.
We remark that the penalties $(\alpha_{i,j})_{i <j}$ satisfy the cocycle condition for the time instants in ${\cal T}$. 

The operator $x_{s_i, s_{i+1}}$ is weak $\F_{s_i}$-homogeneous, then $\alpha_{s_i, s_{i+1}}$ is local\footnote{i.e. for $f,g \in {\cal Q}_{s_i,s_{i+1}}$ and for $A \in {\cal F}_{s_i}$, the assertion $1_A E [fX|{\cal F}_{s_i}]=1_AE[gX|{\cal F}_{s_i}]\;\;\forall X \in L_{p}(\Omega,{\cal F}_{s_{i+1}},P)$ implies 
$
1_A \alpha_{s_i,s_{i+i}}(f)=1_A \alpha_{s_i,s_{i+1}}(g).
$
See \cite[Definition 4.1]{BN01}.}.
Observe that, for $A \in \mathcal{F}_{s_i}$,  $1_A E [f_1X|{\cal F}_{s_i}]=1_AE[f_2X|{\cal F}_{s_i}]\;\;\forall X \in L_{p}(\Omega,{\cal F}_{s_{i+1}},P)$ is equal to $1_Af_1 = 1_A f_2$.
Now we consider an argument by induction and we assume that $\alpha_{s_i,s_{j}}$ is local.
First of all recall that any element $\tilde f \in \mathcal{Q}_{s_i, s_{j+1}}$ can be of the form $fg$ where $f\in \mathcal{Q}_{s_i,s_j}$ and $g\in \mathcal{Q}_{s_j,s_{j+1}}$.
We consider $1_Af_1g_1 = 1_Af_2g_2$.
Then $E[1_Af_1g_1 \vert \F_{s_j}] = E[1_Af_2g_2 \vert \F_{s_j}]  $, which implies that
$1_A f_1 = 1_A f_2$. This in turns implies
\begin{equation}
\label{star 1}
1_A \alpha_{s_i,s_j}(f_1) = 1_A \alpha_{s_i,s_j}(f_2).
\end{equation}
Notice that $f_1>0\;P\;a.s.$. It follows that 
$
1_A g_1 = 1_Ag_2.
$
Hence
\begin{equation}\label{star 2}
1_A \alpha_{s_j, s_{j+1}} (g_1) = 1_A \alpha_{s_j, s_{j+1}} (g_2).
\end{equation} 
From \eqref{star 1} and \eqref{star 2} we conclude that:
$$
1_A \alpha_{s_i, s_{j+1}} (f_1g_1) = 1_A \alpha_{s_i, s_{j+1}} (f_2g_2)
$$
by the definition of $\alpha_{s_i, s_{j+1}}$ \eqref{0}.
Hence $\alpha_{s_i, s_{j+1}}$ is local as well.
The cocycle condition and the local property together imply the time-consistency of the system of operators  $\big(\hat x_{s,t} \big)_{s,t\in \T}$, see \cite[Theorem 4.4]{BN01}.

To conclude we show that the family $\big(\hat x_{s,t} \big)_{s,t\in \T}$ constitute a maximal extension. Indeed we have that, for all $i$, 
$$
\hat x_{s_i, s_{i+1}}(X) \geq \bar x_{s_i, s_{i+1}}(X), \quad X \in  L_p(\F_{s_{i+1}}).
$$
We proceed then by induction on $h$ such that $j=i+h$. Let $i<l<j$
\[\begin{split}
\hat x_{s_i, s_{j}}(X) =& \hat x_{s_i, s_{l}}(\hat x_{s_{l}, s_{j}}(X)) 
\geq \bar x_{s_i, s_{l}}(\hat x_{s_{l}, s_{j}}(X)) \\
&\geq  \bar x_{s_i, s_{l}}(\bar x_{s_{l}, s_{j}}(X)) 
= \bar x_{s_i, s_{j}}(X) , \quad X \in L_p(\B).
\end{split}\]
By this we end the proof.
\end{proof}

\begin{corollary}\label{corollary2}
For each $X \in L_p(\F_{s_j})$, there exists $f_X$ in $\mathcal{Q}_{i,j}$ such that
$$
\hat x_{s_i,s_j}(X) = E\big[ f_X X \vert \F_{s_i}\big] - \alpha_{s_i,s_j}(f_X).
$$
\end{corollary}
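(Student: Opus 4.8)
The plan is to prove attainment by induction on $h := j-i$, exploiting the time-consistency of $(\hat x_{s,t})_{s,t\in\T}$ already established in Proposition \ref{propos1T} together with the single-step attainment of Proposition \ref{propmax}. The guiding idea is that a maximizer over $\mathcal{Q}_{i,j}$ should be obtained by \emph{multiplying} the one-step maximizers produced at each elementary interval, since an element of $\mathcal{Q}_{i,j}$ is by definition a product $f=\prod_{l=i}^{j-1}g_{l+1}$ with $g_{l+1}\in\DD^{S,e}_{s_l,s_{l+1}}$.

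For the base case $h=1$ one has $\mathcal{Q}_{i,i+1}=\DD^{S,e}_{s_i,s_{i+1}}$ and $\hat x_{s_i,s_{i+1}}$ is exactly the maximal extension built in Theorem \ref{thmsand}. I would apply Proposition \ref{propmax} to obtain, for each $X\in L_p(\F_{s_{i+1}})$, a density $f_X\in\DD^{S}_{s_i,s_{i+1}}$ with $\hat x_{s_i,s_{i+1}}(X)=E[f_X X\vert\F_{s_i}]-\tilde x_{s_i,s_{i+1}}(E[f_X\,\cdot\,\vert\F_{s_i}])$. Because this value is finite and $E[f_X X]<\infty$ by Hölder, one gets $E[\tilde x_{s_i,s_{i+1}}(E[f_X\,\cdot\,\vert\F_{s_i}])]<\infty$, so Lemma \ref{lemmaequiv0}, using that $m_{s_i,s_{i+1}}$ is non degenerate (which follows from mM1 and weak time-consistency, as noted in the proof of Proposition \ref{propos1T}), forces $f_X>0$ $P$-a.s.; hence $f_X\in\DD^{S,e}_{s_i,s_{i+1}}=\mathcal{Q}_{i,i+1}$. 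Recalling $\alpha_{s_i,s_{i+1}}=\tilde x_{s_i,s_{i+1}}(E[\,\cdot\,\vert\F_{s_i}])$ (the remark after Proposition \ref{propos1T}, via Corollary \ref{cor:S1}), this is the claim for $h=1$.

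For the inductive step I fix $X\in L_p(\F_{s_j})$ and use time-consistency to write $\hat x_{s_i,s_j}(X)=\hat x_{s_i,s_{i+1}}(Y)$ with $Y:=\hat x_{s_{i+1},s_j}(X)\in L_p(\F_{s_{i+1}})$. The inductive hypothesis supplies $f'=\prod_{l=i+1}^{j-1}g_{l+1}\in\mathcal{Q}_{i+1,j}$ with $Y=E[f'X\vert\F_{s_{i+1}}]-\alpha_{s_{i+1},s_j}(f')$, and the base case supplies $g_{i+1}\in\DD^{S,e}_{s_i,s_{i+1}}$ with $\hat x_{s_i,s_{i+1}}(Y)=E[g_{i+1}Y\vert\F_{s_i}]-\alpha_{s_i,s_{i+1}}(g_{i+1})$. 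Setting $f_X:=g_{i+1}f'=\prod_{l=i}^{j-1}g_{l+1}\in\mathcal{Q}_{i,j}$ and substituting $Y$, the $\F_{s_{i+1}}$-measurability of $g_{i+1}$ and the tower property give $E[g_{i+1}E[f'X\vert\F_{s_{i+1}}]\vert\F_{s_i}]=E[f_X X\vert\F_{s_i}]$, so the linear part already matches the target.

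What remains, and what I expect to be the only delicate point, is the penalty bookkeeping: one must check that $\alpha_{s_i,s_{i+1}}(g_{i+1})+E[g_{i+1}\,\alpha_{s_{i+1},s_j}(f')\vert\F_{s_i}]$ coincides with $\alpha_{s_i,s_j}(f_X)$ as defined in \eqref{0}. This is precisely the cocycle identity for the penalties $(\alpha_{i,j})$ verified in the proof of Proposition \ref{propos1T}; carrying it out requires pulling the factor $g_{i+1}$ through the nested conditional expectations and integrating out the future factors using $E[g_{k+1}\vert\F_{s_k}]=1$, equivalently tracking the change of measure $Q$ with density $f_X$ (which agrees with $P$ on $\F_{s_i}$). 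Combining this with the linear part yields $\hat x_{s_i,s_j}(X)=E[f_X X\vert\F_{s_i}]-\alpha_{s_i,s_j}(f_X)$ with $f_X\in\mathcal{Q}_{i,j}$, which closes the induction and proves the corollary.
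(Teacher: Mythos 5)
Your proof is correct and follows essentially the same route as the paper: one-step attainment via Proposition \ref{propmax}, strict positivity via Lemma \ref{lemmaequiv0} (so the maximizer lands in $\DD^{S,e}$), and then chaining the one-step maximizers into the product $f_X=\prod_{l=i}^{j-1}g_{l+1}$ using time-consistency and the cocycle structure of the penalties $\alpha_{s_i,s_j}$. The paper compresses this into three lines, while you make the backward recursion, the finiteness check needed for Lemma \ref{lemmaequiv0}, and the penalty bookkeeping explicit; these are exactly the details the paper leaves implicit.
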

\begin{proof}
For $i=j-1$ apply Proposition \ref{propmax}:
$$
\hat x_{s_{j-1},s_j}(X) = E\big[ f_{X,j}X \vert \F_{s_{j-1}}\big] - \alpha_{s_{j-1}, s_j}(f_{X,j}).
$$
From Lemma \ref{lemmaequiv0}, $ f_{X,j}$ belongs to $\DD^{S,e}_{s_i,s_{i+1}}$.
From the time-consistency of $\big(\hat x_{s,t} \big)_{s,t\in \T}$ and the definition of $\alpha_{s_i,s_j}$ in \eqref{0} we have
$$
f_X = \prod_{l=i}^{j-1} f_{X, l+1}.
$$
By this we end the proof.
\end{proof}

\bigskip
\subsection{Countable discrete time systems}
\label{secdiscrete}

Let us now consider a countable set $\T\subset [0,T]$, with $0,T \in \T$, and a sequence of finite sets $(\T_n)_{n=1}^\infty$:  $\T_n \subseteq \T_{n+1}$, such that $\T = \cup_{n=1}^\infty \T_n$.
Let us consider the time-consistent system $\big(x_{s,t} \big)_{s,t\in \T}$ on $(L_t)_{t\in\T}$ satisfying the sandwich condition \eqref{dynamic sand} with $(m_{s,t}, M_{s,t})_{s,t\in \T}$ fulfilling mM1.
\begin{lemma}
For any $n$, let $( x^n_{s,t})_{s,t \in \T_n}$ be the maximal time-consistent sandwich preserving extensions of $(x_{s,t})_{s,t \in \T_n}$. 
Now consider $s,t, \in {\cal T}$. Let $n_0\in \mathbb{N}$ such that $s,t \in \T_{n_0}$. 
Then, for any $n > n_0$ and $X \in L_p({\cal F}_t)$, the sequence $( x^n_{s,t}(X))_{n>n_0}$ is non increasing $P\;a.s.$ 
Hence it admits a limit 
\begin{equation}
\label{e1}
\hat x_{s,t}(X):= \lim_{n\to \infty}x^n_{s,t}(X).
\end{equation}
Moreover, for $n>n_0$, let $\alpha^n_{s,t}$ be the  minimal penalty associated to  $ x^n_{s,t}$. 
This penalty has representation
\begin{equation}
\alpha^n_{s,t}(Q):=\esssup_{X \in L_p({\cal F}_s)}(E_Q(X|{\cal F}_s)- x^n_{s,t}(X)),
\label{eqn}
\end{equation}
for all probabilility measure $Q \sim P$, where $\alpha^n_{s,t}(Q) = \alpha^n_{s,t}(f)$ with $f=\frac{dQ}{dP}$.
Then, for all $Q \sim P$, the sequence $(\alpha^n_{s,t}(Q))_{n>n_0}$ is non negative and non decreasing $P\;a.s.$.
Hence it admits a limit 
\begin{equation}
\label{e2}
\hat \alpha_{s,t}(Q):= \lim_{n\to\infty} \alpha^n_{s,t}(Q).
\end{equation}

\label{lemmahatx}
\end{lemma}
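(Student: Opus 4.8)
The plan is to establish the two monotonicity claims (non-increasing for $x^n_{s,t}(X)$, non-decreasing for $\alpha^n_{s,t}(Q)$) and then deduce existence of the limits by the monotone convergence of pointwise-bounded monotone sequences. The heart of the argument is the monotonicity in $n$, which should come from the nesting $\T_n \subseteq \T_{n+1}$ together with the maximality of each finite-time extension obtained in Proposition \ref{propos1T}.

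First I would fix $s,t \in \T_{n_0}$ and compare the extensions at levels $n$ and $n+1$. Since $\T_n \subseteq \T_{n+1}$, both families $(x^n_{r,u})_{r,u \in \T_n}$ and $(x^{n+1}_{r,u})_{r,u\in\T_{n+1}}$ are time-consistent sandwich preserving extensions of the \emph{same} underlying system $(x_{r,u})$ on the coarser grid $\T_n$: indeed the restriction of $(x^{n+1}_{r,u})_{r,u\in\T_{n+1}}$ to the time instants of $\T_n$ is again a time-consistent sandwich preserving extension of $(x_{r,u})_{r,u\in\T_n}$, because restricting to a sub-grid preserves both time-consistency (one simply composes over available intermediate times) and the sandwich condition. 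By the maximality clause of Proposition \ref{propos1T} applied on the grid $\T_n$, the operator $x^n_{s,t}$ dominates any such extension, in particular the restricted one, giving $x^n_{s,t}(X) \geq x^{n+1}_{s,t}(X)$ for all $X \in L_p(\F_t)$. This yields the non-increasing property, and since each $x^n_{s,t}$ is a sandwich preserving extension we have the uniform lower bound $x^n_{s,t}(X) \geq -\hat x^n_{s,t}(-X) \geq m_{s,t}$-type control (via Remark \ref{rem0}), so the decreasing sequence is bounded below $P$-a.s. and the limit \eqref{e1} exists.

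For the penalties I would argue dually. Each $\alpha^n_{s,t}$ is the minimal penalty of $x^n_{s,t}$, so from the representation \eqref{pen-S2}--\eqref{eqn} we have $\alpha^n_{s,t}(Q) = \esssup_{X}\{E_Q(X|\F_s) - x^n_{s,t}(X)\}$. Since $x^n_{s,t} \geq x^{n+1}_{s,t}$ pointwise, the bracketed quantity $E_Q(X|\F_s) - x^n_{s,t}(X)$ is pointwise non-decreasing in $n$ for each fixed $X$, hence so is the essential supremum over $X$; this gives the non-decreasing property of $(\alpha^n_{s,t}(Q))_{n>n_0}$. Non-negativity follows because $x^n_{s,t}(0)=0$, so $X=0$ is admissible in the supremum. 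A monotone bounded-below (here bounded-above is not guaranteed but the sequence may increase to $+\infty$, which is allowed since the limit \eqref{e2} is taken in $[0,+\infty]$) sequence admits a $P$-a.s. limit, yielding \eqref{e2}.

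The main obstacle I anticipate is the comparison step justifying $x^n_{s,t} \geq x^{n+1}_{s,t}$: one must verify carefully that the restriction to $\T_n$ of the finer maximal extension is genuinely an admissible competitor in the maximality statement on $\T_n$, i.e. that it is both time-consistent \emph{as a system indexed by $\T_n$} and sandwich preserving with respect to the \emph{same} minorants/majorants $m_{s,t}, M_{s,t}$. Time-consistency on the sub-grid is inherited by composing \eqref{consist} over the intermediate times of $\T_{n+1}$ that happen to lie in $\T_n$; the sandwich condition transfers verbatim since $m_{s,t}, M_{s,t}$ are defined for all $s\leq t$ independently of the grid. Once this admissibility is clear, maximality does the rest; the monotone convergence conclusions for both \eqref{e1} and \eqref{e2} are then routine.
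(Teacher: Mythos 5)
Your proposal is correct and follows essentially the same route as the paper: the paper likewise regards the restriction of $(x^{n+1}_{s,t})_{s,t\in\T_{n+1}}$ to the time instants of $\T_n$ as an admissible (time-consistent, sandwich preserving) competitor and invokes the maximality clause of Proposition \ref{propos1T} to get $x^n_{s,t}(X)\geq x^{n+1}_{s,t}(X)$, then obtains the monotonicity of the penalties directly from formula \eqref{eqn}. Your added details (admissibility of the restricted family, the lower bound via Remark \ref{rem0}, non-negativity from $X=0$) only make explicit what the paper leaves implicit.
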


\begin{proof}
The extensions $(x^n_{s,t})_{s,t \in \T_n}$ are maximal over all other sandwich preserving extensions time-consistent on $\T_n$.  Then, for $s,t, \in {\cal T}$ and $n>n_0$, we can regard the extension $x^{n+1}_{s,t}$  as another sandwich preserving extension of $x_{s,t}$, $(x^{n+1}_{s,t})_{s,t\in \T_n}$ is time-consistent on $\T_n$. Thus $ x^n_{s,t}(X) \geq x^{n+1}_{s,t}(X)$. 

From Corollary \ref{corollary2}, $ x^n_{s,t}$ admits a representation with equivalent probability measures. The result for $\alpha^n_{s,t}(Q)$, $Q \sim P$, is then an immediate consequence of equation (\ref{eqn}).
\end{proof}

\begin{theorem}
\label{teo1T}
Let us consider the discrete time-consistent system $\big(x_{s,t} \big)_{s,t\in \T}$  on $(L_t)_{t\in\T}$ satisfying the sandwich condition \eqref{dynamic sand} with mM1. 
Then each operator in this family admits an extension to the whole $L_p(\F_t)$ with values in $L_p(\F_s)$ satisfying the sandwich condition and such that the family of extensions is time-consistent.
In particular, the family of operators $(\hat x_{s,t})_{s,t\in \T}$
given in Lemma \ref{lemmahatx} is a time-consistent and sandwich preserving extension of $\big(x_{s,t} \big)_{s,t\in \T}$.
Moreover, for any $s\leq t$, the operators $\hat x_{s,t}$ \eqref{e1} and  $\hat \alpha_{s,t}$ \eqref{e2} satisfy the relationship:
\begin{align}
\label{count-st}
\hat x_{s, t}(X) &= \esssup_{Q \sim P} ( E_Q[ X|{\cal F}_{s}]  - \hat \alpha_{s,t}(Q)) \nonumber\\
&= \esssup_{f \in \DD^{S,e}_{s,t}} ( E[ f X|{\cal F}_{s}]  - \hat \alpha_{s,t}(f)), \quad X\in L_p(\F_t).
\end{align}
Moreover, for all $X$ there is $f_X \in \DD_{s,t}^{S,e}$ such that 
\begin{equation}
\label{fx}
\hat x_{s,t}(X)=E(f_XX|{\cal F}_s)-\hat \alpha_{s,t}(f_X).
\end{equation}
This extension is maximal, in the sense that, for any other such extension $\big(\bar x_{s,t} \big)_{s,t\in \T}$ we have that: for all $s<t \in \T$,
$$
\hat x_{s,t}(X) \geq \bar x_{s,t}(X), \quad X \in L_p(\F_t).
$$
Also for all $s,t \in {\cal T}$, $\hat \alpha_{s,t}$ is the minimal penalty associated to $\hat x_{s,t}$.
\end{theorem}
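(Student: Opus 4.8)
The plan is to make the representation \eqref{count-st} the central object, since once it is available the lower semicontinuity of $\hat x_{s,t}$, the attainment \eqref{fx}, the minimal penalty assertion, and time-consistency all follow quickly. First I would note that being an extension of $x_{s,t}$, convexity, monotonicity, weak $\F_s$-homogeneity, the projection property, and the sandwich condition all survive the decreasing $P$-a.s. limit $\hat x_{s,t}=\lim_n x^n_{s,t}$ of Lemma \ref{lemmahatx}, being stable under pointwise limits; lower semicontinuity will instead be recovered from the representation at the end, as a decreasing limit of lower semicontinuous maps need not be lower semicontinuous. Each $x^n_{s,t}$ is, by Proposition \ref{propos1T}, a convex, monotone, lower semicontinuous, weak $\F_s$-homogeneous, sandwich preserving operator on all of $L_p(\F_t)$, so by Theorem \ref{thm:rep-convex}, the argument yielding \eqref{sand-V}, and Corollary \ref{cor:S1b} (here $m_{s,t}$ is non degenerate by mM1 and the weak time-consistency of $(m_{s,t})$), it admits $x^n_{s,t}(X)=\esssup_{f\in\DD^{S,e}_{s,t}}\{E[fX|\F_s]-\alpha^n_{s,t}(f)\}$ with $\alpha^n_{s,t}$ its minimal penalty, in accordance with \eqref{eqn}.

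For \eqref{count-st} one inequality is immediate: fixing $f\in\DD^{S,e}_{s,t}$, from $E[fX|\F_s]-\alpha^n_{s,t}(f)\leq x^n_{s,t}(X)$ and $\alpha^n_{s,t}(f)\uparrow\hat\alpha_{s,t}(f)$ I obtain $E[fX|\F_s]-\hat\alpha_{s,t}(f)\leq\hat x_{s,t}(X)$, hence $\esssup_f\{\cdots\}\leq\hat x_{s,t}(X)$. For the reverse I would pass to expectations. By the lattice property of Lemmas \ref{lemma1}--\ref{lemma2} and \cite[Proposition VI.1.1]{Neveu}, $E[x^n_{s,t}(X)]=\sup_{f\in\DD^{S,e}_{s,t}}\{E[fX]-E[\alpha^n_{s,t}(f)]\}$, attained at some $f_n$ by Proposition \ref{propmax}. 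Since $\DD^S_{s,t}$ is weak* compact (Lemma \ref{lem.tech}), I extract a weak* convergent subsequence $f_{n_k}\to f_\infty\in\DD^S_{s,t}$. Each $f\mapsto E[\alpha^m_{s,t}(f)]=\sup_X\{E[fX]-E[x^m_{s,t}(X)]\}$ is weak* lower semicontinuous, and $\alpha^n\geq\alpha^m$ for $n\geq m$, so $\liminf_k E[\alpha^{n_k}_{s,t}(f_{n_k})]\geq E[\alpha^m_{s,t}(f_\infty)]$, and letting $m\to\infty$ gives $\liminf_k E[\alpha^{n_k}_{s,t}(f_{n_k})]\geq E[\hat\alpha_{s,t}(f_\infty)]$ by monotone convergence. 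With $E[f_{n_k}X]\to E[f_\infty X]$ this yields $E[\hat x_{s,t}(X)]=\lim_k E[x^{n_k}_{s,t}(X)]\leq E[f_\infty X]-E[\hat\alpha_{s,t}(f_\infty)]$ (monotone convergence on the left); finiteness of the right-hand side and non-degeneracy of $m_{s,t}$ place $f_\infty$ in $\DD^e$ by Lemma \ref{lemmaequiv0}, hence in $\DD^{S,e}_{s,t}$. Comparing with the easy inequality, the two sides of \eqref{count-st} have equal expectation while one dominates the other $P$-a.s., so they agree $P$-a.s.; the same $f_\infty$ gives \eqref{fx}. Lower semicontinuity of $\hat x_{s,t}$ now follows, being an $\esssup$ of the continuous affine maps $X\mapsto E[fX|\F_s]-\hat\alpha_{s,t}(f)$.

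The minimal penalty claim is then read off \eqref{count-st}: it gives $\hat\alpha_{s,t}(f)\geq E[fX|\F_s]-\hat x_{s,t}(X)$ for every $X$, hence $\hat\alpha_{s,t}(f)\geq\esssup_X\{E[fX|\F_s]-\hat x_{s,t}(X)\}$, while $\alpha^n_{s,t}(f)\leq\esssup_X\{E[fX|\F_s]-\hat x_{s,t}(X)\}$ (as $x^n_{s,t}\geq\hat x_{s,t}$) gives the reverse inequality in the limit. Maximality is short: any competing time-consistent sandwich preserving extension $(\bar x_{s,t})_{s,t\in\T}$ restricts on each finite $\T_n$ to such an extension, so Proposition \ref{propos1T} yields $\bar x_{s,t}\leq x^n_{s,t}$ for $s,t\in\T_n$, and $n\to\infty$ gives $\bar x_{s,t}\leq\hat x_{s,t}$.

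Time-consistency is where I expect the real work. One inclusion is free: for $s\leq t\leq u$ in $\T$ and $n$ large, $x^n_{s,u}=x^n_{s,t}\circ x^n_{t,u}$, and using $x^n_{t,u}(X)\geq\hat x_{t,u}(X)$, monotonicity of $x^n_{s,t}$, and $x^n_{s,t}\geq\hat x_{s,t}$ I get $x^n_{s,u}(X)\geq\hat x_{s,t}(\hat x_{t,u}(X))$, whence $\hat x_{s,u}\geq\hat x_{s,t}\circ\hat x_{t,u}$ in the limit. For the reverse I would argue through the penalties in their measure form. For each $n$ the time-consistent system $(x^n_{s,t})_{s,t\in\T_n}$ has minimal penalties obeying the cocycle $\alpha^n_{s,u}(Q)=\alpha^n_{s,t}(Q)+E_Q[\alpha^n_{t,u}(Q)|\F_s]$ for $Q\sim P$ (the composition rule for the minimal penalty of $x^n_{s,t}\circ x^n_{t,u}$, underlying \eqref{0} and \cite[Theorem 4.4]{BN01}), and conditional monotone convergence passes this to $\hat\alpha_{s,u}(Q)=\hat\alpha_{s,t}(Q)+E_Q[\hat\alpha_{t,u}(Q)|\F_s]$. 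Then for $Q$ with finite penalties, combining $\hat x_{s,t}(\hat x_{t,u}(X))\geq E_Q[\hat x_{t,u}(X)|\F_s]-\hat\alpha_{s,t}(Q)$, $\hat x_{t,u}(X)\geq E_Q[X|\F_t]-\hat\alpha_{t,u}(Q)$, and the tower property of $E_Q$ gives $\hat x_{s,t}(\hat x_{t,u}(X))\geq E_Q[X|\F_s]-\hat\alpha_{s,u}(Q)$; taking $\esssup$ over $Q$ and using \eqref{count-st} yields $\hat x_{s,t}\circ\hat x_{t,u}\geq\hat x_{s,u}$. The main obstacle is precisely this cocycle step: one must ensure the finite-horizon minimal penalties concatenate for all equivalent $Q$, not merely for the product densities of $\mathcal{Q}_{i,j}$, which is exactly what the locality of $\alpha^n_{s,t}$ and the characterization of time-consistency by local cocycle penalties in \cite[Theorem 4.4]{BN01} provide; using $Q$-conditional rather than $P$-conditional expectations is what dissolves the density-splitting difficulty.
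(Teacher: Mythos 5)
Your proposal is correct, and its core---the representation \eqref{count-st} and the attainment \eqref{fx}---follows essentially the paper's own route: take attaining densities for each $x^n_{s,t}$, use weak* compactness of $\DD^S_{s,t}$ (Lemma \ref{lem.tech}) to extract a limit $f_\infty$, exploit weak* lower semicontinuity of the penalties together with the diagonal estimate $\alpha^m_{s,t}(f_{n_k})\leq\alpha^{n_k}_{s,t}(f_{n_k})$ for $m\leq n_k$ and monotone convergence in $m$, and invoke Lemma \ref{lemmaequiv0} to place $f_\infty$ in $\DD^{S,e}_{s,t}$; your execution in expectation, upgraded to a $P$-a.s.\ identity by ``a.s.\ domination plus equal expectations,'' is if anything a cleaner rendering of the paper's essential-supremum manipulations, and your maximality and minimal-penalty arguments coincide with the paper's Steps 3 and 4. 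The genuine divergence is time-consistency. The paper passes \emph{both} the cocycle condition \emph{and} the local property of the minimal penalties $\alpha^n_{s,t}$ to the limit, and then concludes by citing the characterization of time-consistency through local cocycle penalties, \cite[Theorem 4.4]{BN01}, as a black box. You instead prove the two inequalities directly: $\hat x_{s,u}\geq\hat x_{s,t}\circ\hat x_{t,u}$ by monotone limits of the $\T_n$-consistency of $(x^n_{s,t})$, and the converse by combining the representation \eqref{count-st} with the limit cocycle $\hat\alpha_{s,u}(Q)=\hat\alpha_{s,t}(Q)+E_Q[\hat\alpha_{t,u}(Q)\vert\F_s]$ and the tower property. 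This buys self-containedness: you need only the \emph{forward} fact that minimal penalties of a time-consistent family satisfy the cocycle condition for all $Q\sim P$---which is \cite[Theorem 2.5]{BN01}, the reference you actually want in place of Theorem 4.4 (the latter is the converse implication, needed only by the paper's route)---and you can dispense with the locality property altogether. One further citation-level repair: for non-adjacent $s,t\in\T_n$ the attainment of the supremum should be quoted from Corollary \ref{corollary2} rather than Proposition \ref{propmax}, since the latter concerns the one-step maximal extension of Theorem \ref{thmsand}; the content you use is available either way.
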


\begin{proof}
In Lemma \ref{lemmahatx} we have defined, for all $s,t \in \T$, 
$$
\hat x_{s,t}(X):= \lim_{k\to\infty} x^k_{s,t}(X), \qquad X\in L_p(\F_t),
$$ 
where $x_{s,t}^k$ is the maximal extension of $x_{s,t}$ on $\T_k$ and for $f \in \DD^{S,e}_{s,t}$ with $s,t\in \T_k$, we have set $\hat \alpha_{s,t}(f):=\hat \alpha_{s,t}(Q)$,  $\alpha^k_{s,t}(f)=\alpha^k_{s,t}(Q)$, where $f= dQ/dP$, with
$$
\hat \alpha_{s,t}(f):= \lim_{k\to\infty} \alpha^k_{s,t}(f), \qquad f \in \DD^{S,e}_{s,t}
$$ 

{\it Step 1: Proof of the representations \eqref{count-st}, \eqref{fx} and the sandwich property.}

Let $s, t  \in \T$. 
Fix $X  \in L_p({\cal F}_t)$. 
For every $k$, from Corollary \ref{corollary2}, there is $f_{X,k} \in \DD^{S,e}_{s,t}$ such that 
\begin{equation}
\label{t3.3-1}
 x^k_{s,t}(X)=E(f_{X,k} X |{\cal F}_s)- \alpha^k_{s,t}(f_{X,k}).
 \end{equation}

From Lemma \ref{lem.tech} the set $\DD^S_{s,t}$ is compact for the weak* topology, thus there is a subsequence of $(f_{X,k})_k$ converging to $f_X \in \DD_{s,t}^S$. 
Without loss of generality we can assume that the sequence $(f_{X,k})_k$ itself has the limit $f_X$ (for the weak* topology). 
Fix $n>n_0$. 
From equation \eqref{eqn}, $\alpha^n_{s,t}$ is lower semi continuous for the weak* topology thus 
 $$  
 \alpha^n_{s,t}(f_X) \leq \liminf_{k \rightarrow \infty} \alpha^n_{s,t} (f_{X,k}).
 $$
From Lemma \ref{lemmaequiv0}  it follows that $f_X \in \DD^{S,e}_{s,t}$. 
From Lemma \ref{lemmahatx}, for  given $k$, the sequence $(\alpha^n_{s,t}(f_{X,k}))_n$ is non decreasing. 
Therefore for every $k \geq n$, $\alpha^n_{s,t} (f_{X,k})\leq  \alpha^k_{s,t} (f_{X,k})$. 
Thus by \eqref{t3.3-1},
 $$ 
  \alpha^n_{s,t}(f_X) \leq \liminf_{k \rightarrow \infty}\Big(E(f_{X,k} X|{\cal F}_s))- x^k_{s,t}(X)\Big).
  $$ 
  Passing to the limit as $k \rightarrow \infty$, we get the inequality 
$$ 
\alpha^n_{s,t}(f_X) \leq E(f_X X|{\cal F}_s))-\hat x_{s,t}(X).
$$
Letting  $n \rightarrow \infty$
 \begin{equation}
 \hat\alpha_{s,t}(f_X) \leq E(f_X X|{\cal F}_s))-\hat x_{s,t}(X).
\label{eqalphahat}
\end{equation}
On the other hand, for every $Q \sim P$,  and $Y \in L_p({\cal F}_t)$, for every $n$,
$  \alpha^n_{s,t}(Q) \geq E_Q(Y|{\cal F}_s))- x^n_{s,t}(Y)$.
Passing to the limit this gives:
\begin{equation}
 \hat \alpha_{s,t}(Q) \geq E_Q(Y|{\cal F}_s))-\hat x_{s,t}(Y).
\label{eqalphag2}
\end{equation}
It follows that 
$$
\hat x_{s,t}(X)=E(f_X X|{\cal F}_s))-\hat \alpha_{s,t}(f_X).
$$ 
Then from the above equation and (\ref{eqalphag2}) we have proved the representations \eqref{count-st} and \eqref{fx}.

Notice that the sandwich condition follows from the sandwich condition for $x^n_{s,t}$ passing to the limit for $n\to\infty$, see \eqref{e1}.

\vspace{1mm}
{\it Step 2: Time-consistency.}

From \eqref{count-st}, $\hat x_{s,t}$ is lower semi continuous. From the definition of $\hat x_{s,t}$ as the limit of $x^n_{s,t}$ it follows that $\hat x_{s,t}$ extends  $x_{s,t}$ for every $s, t \in {\cal T}$.
On the other hand for every $r \leq s \leq t$ in $\T$ and every $n$ large enough such that $r,s,t$ belong to $\T_n$, we already know that $(x^n_{s,t})_{s,t\in \T}$ is $\T_n$ time-consistent. 
We recall that the minimal penalty of a time-consistent family of operators satisfies the local property \cite[Lemma 2.3]{BN01} and the cocycle condition \cite[Theorem 2.5]{BN01}.
The family of penalties $(\alpha^n_{s,t})_{s,t\in\T}$ satisfies the cocycle condition: 
$$
 \alpha^n_{r,t}(Q)= \alpha^n_{r,s}(Q)+E_Q( \alpha^n_{s,t}(Q) |{\cal F}_r), \quad  Q \sim P.
 $$
Hence, passing to the limit for the non decreasing sequence $(\alpha^n_{s,t})_{s,t\in\T}$ we get the cocycle condition:
 $$
\hat \alpha_{r,t}(Q)= \hat \alpha_{r,s}(Q)+E_Q( \hat \alpha_{s,t}(Q)  |{\cal F}_r), \quad  Q \sim P.
 $$
The local property is obtained in the same way.
And thus from \cite[Theorem 4.4]{BN01}, $(\hat x _{s,t})_{s,t\in\T}$ is time-consistent. 

\vspace{1mm}
{\it Step 3: Maximality of $\hat x_{s,t}$.}

Notice that if another family $\overline x_{s,t}$  satisfies all the above properties. Necessarily for all $s,t \in \T$ and  $n$ large enough such that $s,t \in \T_n$, 
from the maximal property of $ x^n_{s,t}$ it follows that 
$\overline x_{s,t}(X) \leq  x^n_{s,t}(X)$. Thus passing to the limit we get the maximality for $\hat x_{s,t}$.

\vspace{1mm}
{\it Step 4: Minimality of $\hat \alpha_{s,t}$.}

To see that $\hat \alpha_{s,t}$  is the minimal penalty associated to $\hat x_{s,t}$, we proceed as follows.
From Lemma  \ref{lemmahatx}, the sequence $( x^n_{s,t}(X))_{n>n_0}$ is non increasing $P\;a.s.$.\\
We already know that  $ \alpha^n_{s,t}$  is the minimal penalty associated to  $( x^n_{s,t}(X))$.Thus 
\begin{align*}
\alpha^n_{st}(Q)&=\esssup_{X \in L_p(\F_t)} \big(E_Q (X|{\cal F}_s) - x^n_{s,t}(X) \big) \\
& \leq \esssup_{X\in L_p(\F_t)} \big( E_Q  (X|{\cal F}_s) - \hat x_{s,t}(X) \big) .
\end{align*}
Passing to the limit, we have 
$$
\hat \alpha_{st}(Q)\leq \esssup_{X \in L_p(\F_t)} \big( E_Q (X|{\cal F}_s) - \hat x_{s,t}(X) \big).
$$
On the other hand, for all $X$ we have
$$
\alpha^n_{st}(Q) \geq  E_Q(X|{\cal F}_s) - x^n_{s,t}(X).
$$
Passing to the limit, we have 
$$\hat \alpha_{st}(Q) \geq E_Q(X|{\cal F}_s) - \hat x_{s,t}(X),\;\; \forall {X \in L_p(\F_t)}. $$
Hence 
$$\hat \alpha_{st}(Q)= \esssup_{X \in L_p(\F_t)} \big(E_Q(X|{\cal F}_s) - \hat x_{s,t}(X)\big) .$$
\end{proof}


\section{Sandwich extensions of continuous time systems of operators}

In this section we study sandwich preserving extensions for a system of operators $(x_{s,t})_{s,t \in [0,T]}$.
These extensions are time-consistent.
We stress that to obtain a time-consistent extension it is not enough to collect all the extensions of single operators in one family.
Time-consistency is achieved with some careful procedure of extension involving the representation of the operators and an appropriate passage from discrete to continuous time.
For this we first define the system of majorant and minorant operators serving as bounds in the sandwiches. 

\begin{definition}
We say that the family $(m_{s,t},M_{s,t})_{s,t\in [0,T]}$ satisfies the mM2-condition if 
\begin{enumerate}
\item mM1 is satisfied (Definition \ref{mM1});
\item  $\esssup_{s \leq T}(M_{s,T}(X))$  belongs to $L_p({\cal F}_T)^+$ for all $X \in L_p({\cal F}_T)^+$;
\item  for every $X \in L_p({\cal F}_t)^+$,
\begin{equation}
m_{s,t}(X)=lim_{t'>t,t'\downarrow t} m_{st'}(X);
\label{wtcm2}
\end{equation}
\label{defwtc}
\item for every $X \in L_p({\cal F}_t)^+$,
\begin{equation}
M_{s,t}(X)=lim_{t'>t,t'\downarrow t} M_{st'}(X);
\label{eqwtcM2}
\end{equation}
\item  for every $X \in L_p({\cal F}_t)^+$,
\begin{equation}
m_{s,t}(X) \leq \limsup_{s'>s,s'\downarrow s} m_{s't}(X);   \;\;\;\;\;       M_{s,t}(X) \geq \liminf_{s'>s,s'\downarrow s} M_{s't}(X);
\label{wrc}
\end{equation} 
 \end{enumerate}
\label{defmM2}
\end{definition}

Let ${\cal T}$ be a countable dense subset of $[0,T]$ containing $0$ and $T$. 
\begin{definition} A system $\big(x_{s,t}\big)_{s,t \in [0,T]}$ on $(L_t)_{t \in [0,T]}$  is right-continuous if for all $t$, all $X \in L_t$, and all sequences $(s_n)_n$, $s<s_n \leq t$, $s_n \downarrow s$, $x_{s,t}(X)=\lim_{n \rightarrow \infty}x_{s_n,t}(X)$, where the convergence is $P$  a.s.
\end{definition}

\begin{lemma} 
\label{lemmaQ}
Assume mM2 condition. Let $\big(\hat x_{s,t} \big)_{s,t\in \T}$ and $\big(\hat \alpha_{s,t} \big)_{s,t\in \T}$ be as in Lemma  \ref{lemmahatx}.  There is a probability measure $Q_0$ equivalent to $P$ such that for all $s,t \in {\cal T}$, $0 \leq s \leq t \leq T$,  $\hat \alpha_{s,t}(Q_0)=0$.
\end{lemma}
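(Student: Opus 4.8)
The plan is to exhibit a single density $f_0=dQ_0/dP$ that annihilates every penalty simultaneously. First I would recall from Lemma~\ref{lemmahatx} that, for fixed $s,t\in\T$ with $s\le t$ and every $n$ large enough that $s,t\in\T_n$, the sequence $(\alpha^n_{s,t}(Q))_n$ is non-negative and non-decreasing with limit $\hat\alpha_{s,t}(Q)$. Hence $\hat\alpha_{s,t}(Q_0)=0$ holds \emph{if and only if} $\alpha^n_{s,t}(Q_0)=0$ for every such $n$. So it suffices to produce $Q_0\sim P$ with $\alpha^n_{s,t}(Q_0)=0$ for all $n$ and all $s\le t$ in $\T_n$; and within each finite grid $\T_n$ the cocycle relation \eqref{0} reduces this to the vanishing of the one-step penalties along consecutive points of $\T_n$.

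Next I would settle the finite case. Fix $\T_n=\{t_0<\dots<t_{K}\}$. For a single step $(t_i,t_{i+1})$, the maximal extension satisfies $x^n_{t_i,t_{i+1}}(0)=0$, so its representation \eqref{eq3.1} forces $\essinf_{g\in\DD^{S}_{t_i,t_{i+1}}}\alpha_{t_i,t_{i+1}}(g)=0$. Using the lattice property together with the weak* compactness of $\DD^S_{t_i,t_{i+1}}$ (Lemma~\ref{lem.tech}) and the lower semicontinuity of the penalty, I would extract a minimiser $g_i$ with $\alpha_{t_i,t_{i+1}}(g_i)=0$; since this value is finite and $m_{t_i,t_{i+1}}$ is non-degenerate, Lemma~\ref{lemmaequiv0} places $g_i$ in $\DD^{S,e}_{t_i,t_{i+1}}$. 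Setting $f^n:=\prod_{i} g_{i+1}$, the weak time-consistency \eqref{wtcm}--\eqref{wtcM} of $(m_{s,t})$ and $(M_{s,t})$ guarantees $f^n\in\DD^{S,e}_{0,T}$, while the cocycle formula \eqref{0} yields $\alpha^n_{s,t}(f^n)=\sum_l E[\alpha_{t_l,t_{l+1}}(g_{l+1})\mid\F_s]=0$ for every pair $s\le t$ in $\T_n$.

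It remains to pass to the intersection over $n$. I would set $\mathcal M_n:=\{f\in\DD^S_{0,T}:\ \alpha^n_{s,t}(f)=0\ \text{for all }s\le t\text{ in }\T_n\}$. Each $\mathcal M_n$ is non-empty (it contains $f^n$) and, being a finite intersection of the weak*-closed sets $\{\alpha^n_{s,t}\le 0\}$ inside the weak*-compact set $\DD^S_{0,T}$ (Lemma~\ref{lem.tech}), it is weak*-compact. Because $\alpha^n_{s,t}\le\alpha^{n+1}_{s,t}$ (Lemma~\ref{lemmahatx}) and all penalties are non-negative, for $f\in\mathcal M_{n+1}$ and $s,t\in\T_n\subseteq\T_{n+1}$ one has $0\le\alpha^n_{s,t}(f)\le\alpha^{n+1}_{s,t}(f)=0$, so the family is nested, $\mathcal M_{n+1}\subseteq\mathcal M_n$. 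A nested sequence of non-empty compact sets has non-empty intersection, so I may pick $f_0\in\bigcap_n\mathcal M_n$; then $Q_0:=f_0\,P$ satisfies $\hat\alpha_{s,t}(Q_0)=\lim_n\alpha^n_{s,t}(f_0)=0$ for every $s\le t$ in $\T$.

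The delicate point I expect to be the main obstacle is that the weak* limit $f_0$ a priori need not be a genuine equivalent change of measure: it could degenerate or lose mass, the danger being most acute when $p=\infty$, where one must remain inside $L_1$. This is precisely where the compactness furnished by Lemma~\ref{lem.tech}—which, for $p=\infty$, keeps $\DD^S_{0,T}$ inside $L_1$ and compact through the regularity of $M$ required by mM2—together with the domination built into the mM2 hypothesis, is used to ensure $f_0$ stays a probability density. Finally, since $\hat\alpha_{0,T}(f_0)=0<\infty$ and $m_{0,T}$ is non-degenerate, Lemma~\ref{lemmaequiv0} forces $f_0>0$ $P$-a.s., so $Q_0\sim P$, which completes the construction.
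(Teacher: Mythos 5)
Your strategy---build, for each finite grid $\T_n$, a density whose penalty vanishes on all of $\T_n$, then extract a common density by nested weak*-compactness---is workable in spirit, but as written it has a concrete gap: the sets $\mathcal M_n=\{f\in\DD^S_{0,T}:\alpha^n_{s,t}(f)=0\ \text{for all }s\le t\text{ in }\T_n\}$ are neither obviously well defined nor weak*-closed once $s>0$. Elements of $\DD^S_{0,T}$ are normalized at time $0$ ($E[f\vert \F_0]=1$) and need not be strictly positive (you cannot work inside $\DD^{S,e}_{0,T}$ instead, since strict positivity is not preserved by weak* limits and would destroy compactness). For $s>0$ the penalty $\alpha^n_{s,t}$ must be evaluated at the measure $Q_f=fP$, and $E_{Q_f}[X\vert\F_s]=E[fX\vert\F_s]/E[f\vert\F_s]$ is not affine in $f$; so the mechanism that makes such sets closed in the paper (Proposition \ref{propmax}: the expected penalty is a supremum of weak*-continuous \emph{affine} functions of $f$, hence l.s.c.---valid there because densities in $\DD^S_{s,t}$ satisfy $E[f\vert\F_s]=1$) does not apply, and on the event $\{E[f\vert\F_s]=0\}$ the quantity $\alpha^n_{s,t}(Q_f)$ is not even defined $P$-a.s. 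Your closedness claim for "$\{\alpha^n_{s,t}\le 0\}$" is therefore unjustified precisely where it matters.

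The repair uses an idea you mention but do not exploit at the right place: keep only the single constraint $\alpha^n_{0,T}(f)=0$ in $\mathcal M_n$ (for the pair $(0,T)$ the penalty \emph{is} affine in $f$, the lattice property gives weak*-l.s.c.\ of $E[\alpha^n_{0,T}(\cdot)]$, and the set is closed); after extracting $f_0$, the equality $\hat\alpha_{0,T}(f_0)=0$ plus Lemma \ref{lemmaequiv0} gives $f_0>0$, and then the cocycle condition satisfied by $(\alpha^n_{s,t})$ together with the non-negativity of the penalties forces $\alpha^n_{s,t}(Q_0)=0$ for all $s\le t$ in $\T_n$, hence $\hat\alpha_{s,t}(Q_0)=\lim_n\alpha^n_{s,t}(Q_0)=0$. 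This repair is exactly the paper's proof in miniature: the paper never returns to the finite grids, because Theorem \ref{teo1T} already packages the attainment result \eqref{fx}; applying it with $X=0$, $s=0$, $t=T$ yields at once $f_0\in\DD^{S,e}_{0,T}$ (so $Q_0\sim P$ by Lemma \ref{lemmaequiv0}) with $0=\hat x_{0,T}(0)=-\hat\alpha_{0,T}(Q_0)$, and the $\T$-cocycle condition plus non-negativity propagates the zero to every pair $s\le t$ in $\T$. Your grid-level construction of $f^n$ and the compactness extraction in effect re-derive the special case $X=0$ of \eqref{fx}; recognizing that this work is already done collapses the proof to a few lines.
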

\begin{proof}
From Theorem \ref{teo1T}, there is a probability measure $Q_0$ such that $0=\hat x_{0,T}(0)=-\hat \alpha_{0,T}(Q_0)$. 
From Lemma \ref{lemmaequiv0}, $Q_0$ is equivalent to $P$. It follows  from the ${\cal T}$-cocycle condition and the non negativity of the penalty that  $\hat \alpha_{s,t}(Q_0)=0$ for all $s \leq t$ in ${\cal T}$.
\end{proof}

\begin{proposition}
The notations are those of Lemma \ref{lemmaQ}. 
\begin{enumerate}
\item For all $X\in L_p({\cal F}_T)$, $\big(\hat x_{s,T}(X)\big)_{s\in \T}$ is a $Q_0$-supermartingale.
\item For every sequence $(s_n)_n $ in ${\cal T}$ decreasing to $s \in {\cal T}$, 
$E_{Q_0}(\hat x_{s_n,T}(X))$ has the limit $E_{Q_0}(\hat x_{s,T}(X))$,  for all $X\in L_p({\cal F}_T)$.
\end{enumerate}
\label{propext2}
\end{proposition}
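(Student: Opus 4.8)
The plan is to read both statements off the penalty representation of Theorem~\ref{teo1T}, using as the single decisive input that $\hat\alpha_{s,t}(Q_0)=0$ for all $s\le t$ in $\T$ (Lemma~\ref{lemmaQ}). For part~1 I would fix $r\le s$ in $\T$ and use the $\T$-time-consistency of $(\hat x_{s,t})_{s,t\in\T}$ to write $\hat x_{r,T}(X)=\hat x_{r,s}(\hat x_{s,T}(X))$. Since $\hat\alpha_{r,s}(Q_0)=0$, the representation \eqref{count-st} gives the pointwise bound $\hat x_{r,s}(Y)\ge E_{Q_0}[Y\mid\F_r]-\hat\alpha_{r,s}(Q_0)=E_{Q_0}[Y\mid\F_r]$ for every $Y\in L_p(\F_s)$; taking $Y=\hat x_{s,T}(X)$ yields the supermartingale inequality $\hat x_{r,T}(X)\ge E_{Q_0}[\hat x_{s,T}(X)\mid\F_r]$. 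Integrability is settled once and for all by the sandwich property of $\hat x_{s,T}$ (Theorem~\ref{teo1T}) together with the lower bound $\hat x_{s,T}(X)\ge E_{Q_0}[X\mid\F_s]\ge -M_{s,T}(X^-)$: this gives $|\hat x_{s,T}(X)|\le M_{s,T}(X^+)+M_{s,T}(X^-)\le H$, where $H:=\esssup_{s'\le T}\big(M_{s',T}(X^+)+M_{s',T}(X^-)\big)\in L_p(\F_T)^+$ by the mM2 condition (Definition~\ref{defmM2}, item~2). Since the density of $Q_0$ lies in $L_q$, Hölder gives $E_{Q_0}[H]<\infty$, so $H\in L^1(Q_0)$ dominates the whole family $(\hat x_{s,T}(X))_{s\in\T}$.

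For part~2, part~1 first supplies the monotonicity $E_{Q_0}[\hat x_{s_n,T}(X)]\le E_{Q_0}[\hat x_{s,T}(X)]$ and shows that $n\mapsto E_{Q_0}[\hat x_{s_n,T}(X)]$ is non-decreasing (as $s_n\downarrow s$), so the limit $\ell$ exists and $\ell\le E_{Q_0}[\hat x_{s,T}(X)]$; the whole task is the reverse inequality. Writing $W_n:=\hat x_{s_n,T}(X)$ and $\mathcal G_n:=\F_{s_n}$, the $\mathcal G_n$ decrease with $\bigcap_n\mathcal G_n=\F_{s+}=\F_s$ by right-continuity of the filtration, and part~1 says $(W_n,\mathcal G_n)$ is a backward $Q_0$-supermartingale, dominated by $H\in L^1(Q_0)$ and hence uniformly integrable. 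By the backward (reverse) supermartingale convergence theorem, $W_n\to W_\infty$ both $Q_0$-a.s.\ and in $L^1(Q_0)$, where $W_\infty$ is $\F_s$-measurable and $\ell=E_{Q_0}[W_\infty]$. Letting $n\to\infty$ in $\hat x_{s,T}(X)\ge E_{Q_0}[W_n\mid\F_s]$ already gives the easy inequality $\hat x_{s,T}(X)\ge W_\infty$.

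It then remains to prove $\hat x_{s,T}(X)\le W_\infty$, after which $E_{Q_0}[\hat x_{s,T}(X)]=E_{Q_0}[W_\infty]=\ell$ closes part~2. Here the key device is to freeze an $n$-\emph{independent} optimal measure: by \eqref{fx} there is $Q_X\sim P$ with $\hat x_{s,T}(X)=E_{Q_X}[X\mid\F_s]-\hat\alpha_{s,T}(Q_X)$. Using the penalty cocycle $\hat\alpha_{s,T}(Q_X)=\hat\alpha_{s,s_n}(Q_X)+E_{Q_X}[\hat\alpha_{s_n,T}(Q_X)\mid\F_s]$ established in Theorem~\ref{teo1T}, the tower property, non-negativity of the penalty, and the representation bound $E_{Q_X}[X\mid\F_{s_n}]-\hat\alpha_{s_n,T}(Q_X)\le\hat x_{s_n,T}(X)=W_n$, I obtain $\hat x_{s,T}(X)\le E_{Q_X}[W_n\mid\F_s]$ for every $n$. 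Since $Q_X\sim Q_0$ the convergence $W_n\to W_\infty$ holds $Q_X$-a.s.\ as well, and $H\in L^1(Q_X)$ again by Hölder, so dominated convergence gives $E_{Q_X}[W_n\mid\F_s]\to E_{Q_X}[W_\infty\mid\F_s]=W_\infty$, whence $\hat x_{s,T}(X)\le W_\infty$.

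The main obstacle is precisely this last identification. The backward convergence theorem only manufactures an abstract $\F_s$-measurable limit $W_\infty$ together with the harmless inequality $\hat x_{s,T}(X)\ge W_\infty$, and this alone does not force right-continuity of the expectation (which for a generic supermartingale may genuinely fail). The crux is to produce the opposite inequality, and the trick that makes it work is to transport the comparison through a \emph{single} optimal measure $Q_X$ attached to $\hat x_{s,T}(X)$: the cocycle decomposition converts optimality at level $(s,T)$ into a bound against $W_n$ that is uniform in $n$, so that one dominated-convergence passage under $Q_X$ suffices. The uniform domination furnished by the mM2 condition (Definition~\ref{defmM2}, item~2) is exactly what guarantees both the uniform integrability needed for the backward convergence and the legitimacy of dominated convergence under every equivalent measure.
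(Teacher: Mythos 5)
Your proof is correct. Part 1 coincides with the paper's (sketched) argument, and the decisive device in your part 2 is exactly the paper's key step: fixing a single optimal measure $R_X$ (your $Q_X$) realizing $\hat x_{s,T}(X)$ and combining the cocycle condition, the non-negativity of the penalties, and the representation at level $(s_n,T)$ to get the $n$-uniform bound $\hat x_{s,T}(X)\le E_{R_X}\big(\hat x_{s_n,T}(X)\vert\F_s\big)$, which is precisely \eqref{eq4.1}--\eqref{eq4.1b}. Where you genuinely diverge is the passage to the limit. The paper never identifies an almost-sure limit: it works at the level of unconditional expectations, writing $E_{Q_0}(\hat x_{s,T}(X))\le E\big[g\,E(f_X\vert\F_{s_n})\,\hat x_{s_n,T}(X)\big]$ with $g$ the density of $Q_0$ restricted to $\F_s$, splitting off the error terms $\epsilon_n(X)$ and $\delta_n(X)$, and killing them by H\"older via the uniform bound $\sup_n E(\vert\hat x_{s_n,T}(X)\vert^p)<\infty$ from item 2 of Definition \ref{defmM2} together with the $L_q$-convergence $g(E(f_X\vert\F_{s_n})-1)\to 0$; the supermartingale monotonicity then squeezes the expectations. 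You instead invoke the backward (reversed) supermartingale convergence theorem under $Q_0$ — legitimately, since uniform integrability follows from your dominating variable $H$, which is the same use of item 2 of Definition \ref{defmM2} in disguise — to produce an a.s.\ and $L^1(Q_0)$ limit $W_\infty$ measurable with respect to $\bigcap_n\F_{s_n}=\F_s$ (right-continuity of the filtration, a standing assumption the paper also uses implicitly when it lets $E(f_X\vert\F_{s_n})\to 1$), and you identify $W_\infty=\hat x_{s,T}(X)$ by a two-sided comparison, using conditional dominated convergence under $Q_X$ for the hard direction. Your route costs one extra classical theorem but delivers a strictly stronger conclusion — $\hat x_{s_n,T}(X)\to\hat x_{s,T}(X)$ a.s.\ and in $L^1(Q_0)$, not merely convergence of expectations — which anticipates the c\`adl\`ag modification the paper later builds via Dellacherie--Meyer; the paper's computation is more elementary and self-contained.
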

\begin{proof}
\begin{enumerate}
\item It follows easily from Lemma \ref{lemmaQ}, the ${\cal T}$ time-consistency, and the representation of $\hat x_{s,T}(X)$  \eqref{count-st}.
\item The proof is inspired by the one  of Lemma 4 in \cite{BN02}. The main differences are the facts that here $\hat x_{s,t}$ is only defined for $s,t$ in ${\cal T}$ and that the   operator is  defined on $L \subset L_p({\cal F}_t)$ $1 \leq p \leq \infty$,  while  in \cite{BN02} the dynamic risk measure $\rho_{s,t}$ was defined on $L_{\infty}({\cal F}_t)$ and time-consistency was  considered for all real indexes. \\ Let $X \in L_p({\cal F}_T)$ and $s \in {\cal T}$.  From Theorem \ref{teo1T},  there is an $f_X \in \DD^{S,e}_{s,T}$ such that 
$\hat x_{s,T}(X)=E(f_XX|{\cal F}_s)-\hat \alpha_{s,T}(f_X)$.  Let $R_X$ be the probability measure such that $\frac{dR_X}{dP}=f_X$. It follows from the cocycle condition that 
\begin{eqnarray}
\label{eq4.1}
\hat x_{s,T}(X)=E_{R_X}(X|{\cal F}_s)-\hat \alpha_{s,T}(R_X)\nonumber \\=E_{R_X}[E_{R_X}(X|{\cal F}_{s_n})-\hat \alpha_{s_n,T}(R_X)]|{\cal F}_s)-\hat \alpha_{ss_n}(R_X) .
\end{eqnarray}
Furthermore $E_{R_X}(X|{\cal F}_{s_n})-\hat \alpha_{s_n,T}(R_X) \leq \hat x_{s_n,T}(X)$ and the penalties are non negative. 
Hence, we have that 
\begin{equation}
\label{eq4.1b}
\hat x_{s,T}(X) \leq  E_{R_X}(\hat x_{s_n,T}(X)|{\cal F}_s)
\end{equation}
and
\begin{eqnarray}
 \label{eq4.2}
\hat x_{s,T}(X) \leq  E_{R_X}(\hat x_{s_n,T}(X)|{\cal F}_s)&=&E(f_X\hat x_{s_n,T}(X)|{\cal F}_s)\nonumber\\
&=&E(E(f_X|{\cal F}_{s_n})\hat x_{s_n,T}(X)|{\cal F}_s)  . \nonumber
\end{eqnarray}
Let $g \in L_q({\cal F}_s)$ be the Radon Nykodym derivative of the restriction of $Q_0$ to ${\cal F}_s$.  
Taking the $Q_0$ expectation, $g$ being ${\cal F}_s$-measurable, we obtain
\begin{eqnarray}
E_{Q_0}(\hat x_{s,T}(X)) &\leq& E[gE(f_X|{\cal F}_{s_n})\hat x_{s_n,T}(X)]   \label{eq4.2.0} \\
&=  & E(g (\hat x_{s_n,T}(X))
+E[\hat x_{s_n,T}(X)(g(E(f_X|{\cal F}_{s_n})-1))]   \nonumber
\end{eqnarray}
The density $f_X$ belongs to $\DD^S_{s,T}$ and $g$ belongs to $\DD^S_{0,s}$, so $gf_X$ belongs to $L_q({\cal F}_T)$ and  $g(E(f_X|{\cal F}_{s_n})-1)$ has limit $0$ in $L_q({\cal F}_T)$.  \\
For all $X$, $|\hat x_{s_n T}(X)| \leq \hat x_{s_n,T}(|X|) \leq M_{s_n,T}(|X|)$. From  property {\it 2.} in Definition  \ref{defmM2}, 
$\sup_n E((|\hat x_{s_n,T}(X)|^p)<\infty$. It follows from H\"older inequality that $ \epsilon_n(X):=E[\hat x_{s_n,T}(X)(g(E(f_X|{\cal F}_{s_n})-1))]$ has limit $0$. 
Similarly, 
\begin{eqnarray}
\delta_n(X)&:=&-E_{Q_0}(\hat x_{s_n,T}(X))+E(g\hat x_{s_n,T}(X))\nonumber\\
&=&-E\big{( [}E(\frac{dQ_0}{dP}|{\cal F}_{s_n})+E(\frac{dQ_0}{dP}|{\cal F}_{s})\big{]}\hat x_{s_n,T}(X)\big{)}\nonumber
\end{eqnarray}
has limit $0$.
Observe that, from the $Q_0$-supermartingale property (point {\it 1.}), if follows that 
\begin{equation*}
E_{Q_0}(\hat x_{s_n,T}(X)) \leq E_{Q_0}(\hat x_{s,T}(X)) .
\end{equation*}
Then from \eqref{eq4.2.0} we obtain that
$$ E_{Q_0}(\hat x_{s,T}(X))-\epsilon_n(X)-\delta_n(X) \leq E_{Q_0}(\hat x_{s_n,T}(X)) \leq E_{Q_0}(\hat x_{s,T}(X)).$$
This proves the result.
\end{enumerate}
\end{proof}

\begin{theorem}
\label{teo2T2}
Let us consider a right-continuous time-consistent system of operators $\big(x_{s,t} \big)_{s,t\in [0,T]}$ of type (2.1) defined on $(L_t)_{t\in [0,T]}$ satisfying the sandwich condition with  mM2.\\ 
Then there is  a right-continuous, time-consistent, sandwich preserving extension  $\big(\hat x_{s,t} \big)_{s,t\in [0,T]}$ defined on the whole $(L_p(\F_t))_{t \in [0,T]}$.  
One such extension can be represented as
\begin{equation}
\label{extension in continuous}
\hat x_{s,t}(X)=\esssup_{R \in {\cal R}} [E_R(X|{\cal F}_s) -\hat\alpha_{s,t}(R)], \quad X \in L_p(\mathcal{F}_t),
\end{equation}
with 
\begin{equation}
{\cal R} :=\{R \sim P:\; \hat \alpha_{0,T}(R)<\infty\}
\label{eqR}
\end{equation}
and $\hat \alpha_{0,T}$ is the minimal penalty associated to $\hat x_{0,T}$ as in Theorem \ref{teo1T}.
Also for any $X \in L_p(\mathcal{F}_t)$, there exists $R_X \in \mathcal{R}$ such that
$$
\hat x_{s,t}(X)=E_{R_X}(X|{\cal F}_s)-\hat \alpha_{s,t}(R_X)\quad \forall s \leq t.
$$
Furthermore for all $t>0$, and all $X \in L_p({\cal F}_t)$, $\hat x_{s,t}(X)_{0 \leq s \leq t}$ admits a c\`adl\`ag version.
\label{thmextcontinuous2}
\end{theorem}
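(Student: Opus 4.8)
The plan is to build the continuous-time system by regularizing the discrete-time extension $(\hat x_{s,t})_{s,t\in\T}$ of Theorem \ref{teo1T} through martingale theory, and then to transport the dual representation and all structural properties from $\T$ to the full index set $[0,T]$ by passing to right-limits. The engine is Proposition \ref{propext2}: for a fixed $X\in L_p(\F_T)$ the process $\big(\hat x_{s,T}(X)\big)_{s\in\T}$ is a $Q_0$-supermartingale indexed by the countable dense set $\T$, and its $Q_0$-expectation is right-continuous along $\T$, where $Q_0\sim P$ is the measure furnished by Lemma \ref{lemmaQ}.

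First I would regularize. Since the filtration is right-continuous and $P$-augmented and $\T$ is dense in $[0,T]$, Doob's regularization theorem for supermartingales yields that the right-limits $\lim_{\T\ni t\downarrow s}\hat x_{t,T}(X)$ exist $P$-a.s. (as $Q_0\sim P$) for every $s\in[0,T]$ and define a c\`adl\`ag process. For $s\in[0,T]\setminus\T$ I would \emph{define} $\hat x_{s,T}(X)$ to be this right-limit; the right-continuity of the expectation (Proposition \ref{propext2}, point 2) guarantees that at points $s\in\T$ the right-limit coincides with the already-defined $\hat x_{s,T}(X)$, so this is a genuine extension. I would then set $\hat x_{s,t}$, for arbitrary $s\le t$ in $[0,T]$, to be the restriction of $\hat x_{s,T}$ to $L_p(\F_t)$; by $\T$-time-consistency and the projection property this is consistent with the discrete construction (cf. Remark \ref{4.1}). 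The c\`adl\`ag claim for $\big(\hat x_{s,t}(X)\big)_{0\le s\le t}$ is then immediate, being the restriction to $s\le t$ of the c\`adl\`ag process $\big(\hat x_{s,T}(X)\big)_{0\le s\le T}$; right-continuity of the \emph{system}, and the fact that $\hat x_{s,t}$ extends $x_{s,t}$ for all $s,t$, follow from the right-continuity assumed on the original system $(x_{s,t})$ together with the right-limit definition.

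Next I would recover the dual representation. Setting $\hat\alpha_{s,t}(R):=\esssup_{X\in L_p(\F_t)}\{E_R(X|\F_s)-\hat x_{s,t}(X)\}$ for $R\sim P$, the inequality $\hat x_{s,t}(X)\ge E_R(X|\F_s)-\hat\alpha_{s,t}(R)$ is automatic, and the reverse inequality, i.e. the representation \eqref{extension in continuous}, I would obtain by passing to the limit in the discrete representation \eqref{count-st} using the right-continuity just established together with the continuity conditions \eqref{wtcm2}--\eqref{wrc} of mM2, checking that $\hat\alpha_{s,t}$ matches the limit of the discrete penalties and inherits the cocycle and local properties. Because of the cocycle relation and non-negativity of the penalties, finiteness of $\hat\alpha_{0,T}(R)$ controls $\hat\alpha_{s,t}(R)$ for all $s\le t$, which is exactly why $\mathcal R$ in \eqref{eqR} is the correct index set. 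For the universal optimizer $R_X$, I would first attain the $\esssup$ at the endpoints using the weak\(^*\) compactness of $\DD^S_{s,t}$ (Lemma \ref{lem.tech}) and lower semicontinuity of the penalty, exactly as in Theorem \ref{teo1T}, and then promote optimality to every intermediate $s$: if $R_X$ attains $\hat x_{0,t}(X)$, then since $\hat x_{s,t}(X)\ge E_{R_X}(X|\F_s)-\hat\alpha_{s,t}(R_X)$ always holds, applying the monotone operator $\hat x_{0,s}$ (with $\hat x_{0,t}=\hat x_{0,s}\circ\hat x_{s,t}$) and invoking the cocycle condition forces equality $P$-a.s. at every $s$, as any strict gap on a positive-measure set would strictly lower $\hat x_{0,t}(X)$ and contradict optimality at $0$.

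Finally, time-consistency on $[0,T]$ I would deduce from the extended cocycle and local properties of $(\hat\alpha_{s,t})$ via the criterion \cite[Theorem 4.4]{BN01}, as in Theorem \ref{teo1T}, and the sandwich property by passing the discrete sandwich inequalities to the limit through \eqref{wtcm2}--\eqref{wrc}. The hard part will be the regularization combined with the faithful transport of the dual representation to the non-dyadic times: one must ensure that the right-limit process still admits the representation \eqref{extension in continuous} with a penalty that remains a genuine minimal penalty and still obeys the cocycle condition, so that both the single-optimizer argument and time-consistency go through. This is precisely where the $Q_0$-supermartingale structure, the right-continuity of expectations, and the regularity and right-continuity hypotheses built into mM2 must be used in concert, following the template of \cite[Lemma 4]{BN02}.
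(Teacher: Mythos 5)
Your overall architecture is the same as the paper's: regularize the $Q_0$-supermartingale $(\hat x_{s,T}(X))_{s\in\T}$ into a c\`adl\`ag process (the paper invokes the Modification Theorem of Dellacherie--Meyer where you invoke Doob regularization, which is the same tool), define $\hat x_{s,t}$ as the restriction of $\hat x_{s,T}$ to $L_p(\F_t)$, extend the penalties by limits along $\T\ni s_n\downarrow s$ using the cocycle condition, produce a single optimizer $R_X$, and obtain time-consistency and the sandwich property by limiting arguments. However, your pivotal step --- promoting optimality of $R_X$ from the endpoint pair to every intermediate time --- has two genuine defects as written.

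First, it is circular: you invoke $\hat x_{0,t}=\hat x_{0,s}\circ\hat x_{s,t}$ at an arbitrary intermediate $s$, but in your own ordering time-consistency on all of $[0,T]$ is only established in your final step, and it cannot be moved earlier, because the proof of continuous-time time-consistency (the paper's Step 5) itself relies on the universal-optimizer identity. The correct ordering, which the paper follows, is to prove $\hat x_{r,T}(X)=E_{R_X}(X|\F_r)-\hat\alpha_{r,T}(R_X)$ first only for $r\in\T$, where $\T$ time-consistency is already available from the discrete theory, and then extend to all $r\in[0,T]$ by right-continuity of the c\`adl\`ag version and of $r\mapsto\hat\alpha_{r,T}(R_X)$. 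Second, your mechanism for forcing a.s.\ equality is invalid: the claim that ``any strict gap on a positive-measure set would strictly lower $\hat x_{0,t}(X)$'' presumes strict monotonicity, which a convex monotone operator does not have. The paper instead closes the loop with the \emph{linear} functional $E_{R_X}$: combining the cocycle condition with the dual inequality at $(0,r)$ gives
\begin{equation*}
\hat x_{0,T}(X)\;\le\; E_{R_X}\big(\hat x_{r,T}(X)\big)-\hat\alpha_{0,r}(R_X)\;\le\;\hat x_{0,r}\big(\hat x_{r,T}(X)\big)\;=\;\hat x_{0,T}(X),
\end{equation*}
so every inequality collapses to an equality; since the first one is the $R_X$-expectation of the pointwise inequality $E_{R_X}(X|\F_r)-\hat\alpha_{r,T}(R_X)\le\hat x_{r,T}(X)$ and $R_X\sim P$, equality of the expectations forces $P$-a.s.\ equality of the integrands. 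It is the linearity of the expectation, not the monotonicity of $\hat x_{0,s}$, that rules out a gap. A related detail: attainment via the compactness of Lemma \ref{lem.tech} is available only for index pairs in $\T$, so $R_X$ must be produced at $(0,T)$ and then transported to general pairs $(r,t)$ by showing $\hat\alpha_{t,T}(R_X)=0$, as the paper does; producing it directly at $(0,t)$ with $t\notin\T$, as you propose, is not justified by the discrete theory.
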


\begin{proof}
\vspace{1mm}
{\it Step 1:  Definition of the extension $\hat x_{s,t}$ for indices in the whole $[0,T]$.}

Let ${\cal T}$  be a countable dense subset of $[0,T]$ containing $0$ and $T$. Let $\big(\hat x_{s,t} \big)_{s,t\in {\cal T}}$ be the time-consistent extension of  $\big(x_{s,t}\big)_{s,t \in {\cal T}}$ constructed in Section \ref{secdiscrete}. 
Let $X \in L_p({\cal F}_t)$.  
From Proposition \ref{propext2} and from the Modification Theorem (Chapter VI Section 1 in \cite{DM}) applied to the $Q_0$-supermartingale $(\hat x_{s,T}(X))_{s \in {\cal T}}$, it follows that $(\hat x_{s,T}(X))_{s \in {\cal T}}$  admits a modification which can be extended into a c\`adl\`ag process $(\hat x_{s,T}(X))_{s \in [0,T]}$ defined for all $s \in [0,T]$. \\
Notice that from Remark \ref{4.1},  $x_{s,t}$ coincides on $L_t$ with the  restriction of $x_{s,T}$. 
For $0 \leq s\leq t \leq T$, we define  $\hat x_{s,t}$ to be the restriction of $\hat x_{s,T}$  on $L_p({\cal F}_t)$. 
It follows that $\hat x_{s,t}$ is an extension of $x_{s,t}$ to the whole $L_p({\cal F}_t)$. If $s,t \in {\cal T}$ this extension   coincides also with the construction of $\hat x_ {st}$ given in the previous section. To see this, consider the ${\cal T}$ time-consistency of $\hat x_ {st}$
(of the previous section) and its projection property.

\vspace{1mm}
{\it Step 2: Extension of the penalty on a set ${\cal R}$ of probability measures and right-continuity.} 

Consider the penalties $(\hat \alpha_{s,t})_{s,t\in \T}$ associated to $(\hat x_{s,t})_{s,t\in \T}$ given in Section \ref{secdiscrete}.
Define
\begin{equation*}
{\cal R} :=\{R \sim P:\; \hat \alpha_{0,T}(R)<\infty\}
\label{eqR}
\end{equation*}

 Let $X$ in $L_p({\cal F}_T)$. From Theorem \ref{teo1T} (see \eqref{fx}),   there is a probability measure $R_X \sim P$ such that
\begin{equation}
\label{eqrep0}
\hat x_{0,T}(X)=E_{R_X}(X)-\hat \alpha_{0,T}(R_X).
\end{equation}
Thus ${\cal R}$ is non empty.

Let $s\in [0,T]$ and consider $(s_n)_n \subset {\cal T}$, $s_n \downarrow s$.  For every probability measure $R \in {\cal R}$, the sequence $E_R(\hat \alpha_{s_n,T}(R)|{\cal F}_s)$ is increasing. Thus it admits a limit.  
From Section  \ref{secdiscrete} we can see that, if $s$ belongs to ${\cal T}$, this limit is equal to  $\hat \alpha_{sT}(R)$. 
Indeed it is enough to exploit the representation as minimal penalty. Then by right-continuity of the filtration and the c\`adl\`ag extension of Step 1 we have
\[
\begin{split}
\hat \alpha_{s,T} (R) \geq& 
\lim_{n\to\infty} E_R [\hat \alpha_{s_n,T}(R) \vert \mathcal{F}_s]\\
\geq 
&E_R [ \esssup_{X \in L_p(\mathcal{F}_T) } \lim_{n\to\infty} ( E_R [X \vert \mathcal{F}_{s_n} ] - \hat x_{s_n,T}(R) \vert \mathcal{F}_s ] = \hat \alpha_{s,T} (R).
\end{split}
\]
If $s\notin {\cal T}$ we can define $\hat \alpha_{sT}(R)$ as the limit of  $E_R(\hat \alpha_{s_n,T}(R)|{\cal F}_s)<\infty$  $R \;a.s.$.
Moreover, for $r,s \in \T$, due to the ${\cal T}$ time-consistency proved in  Section \ref{secdiscrete}, $\hat \alpha_{r,s}(R)$ satisfies the cocycle condition on $\T$. 
Note that $\sup_{s \leq T} E_R(\hat \alpha_{sT}(R)) =\hat \alpha_{0T}(R) < \infty$.
Then we can  define for all $0 \leq r \leq s \leq T$
\begin{equation}
 \hat \alpha_{r,s}(R):=\hat \alpha_{rT}(R)-E_R(\hat \alpha_{s,T}(R)|{\cal F}_r)
\label{eqdefpen}
\end{equation} 
Thus  $\hat \alpha_{r,s}(R)$ is now defined for all indices in $[0,T]$ and all $R \in {\cal R}$. Moreover $\hat \alpha_{r,s}(R)$ satisfies the cocycle condition. It follows also from the right-continuity of $\hat \alpha_{s,T}(R)$ in $s$  and from the cocycle condition that for all $R \in {\cal R}$, $\hat \alpha_{r,s}(R)$ is also   right-continuous in $s$.


\vspace{1mm}
{\it Step 3: Representation of $\hat x_{r,t}(X)$.}

 Let $0 \leq r  \leq t \leq T$. Let $X \in L_p({\cal F}_t)$.  Let $R_X \in {\cal R}$ such that (\ref{eqrep0}) is satisfied. Now we prove that, for all $r \leq t$,
\begin{equation}
\hat x_{r,t}(X)=E_{R_X}(X|{\cal F}_r)-\hat \alpha_{r,t}(R_X).
\label{eqrepR}
\end{equation}
 In fact, 
making use of the cocycle condition and then of the ${\cal T}$ time-consistency, we get that for all $r \in {\cal T}$, 
\begin{eqnarray*}
\hat x_{0,T}(X)=E_ {R_X}(E_ {R_X}(X|{\cal F}_r)-\hat \alpha_{r,T}( {R_X}))-\hat  \alpha_{0,r}( {R_X}) \nonumber\\
\leq E_ {R_X}(\hat x_{r,T}(X) ) -\hat \alpha_{0,r}( {R_X}) \leq \hat x_{0,r}(\hat x_{r,T}(X))=\hat x_{0,T}(X).
\label{eqnhat}
\end{eqnarray*}
Thus every inequality in the expression above is an equality.  
In particular 
\begin{equation}
\hat x_{r,T}(X)=E_{R_X}(X|{\cal F}_r)-\hat \alpha_{r,T}(R_X), \;\;R_X \; a.s.\; \forall r\in {\cal T},
\label{eqhatalpha1}
\end{equation}
and
\begin{equation}
\hat x_{0,r}(\hat x_{r,T}(X))=E_R(\hat x_{r,T}(X))-\hat \alpha_{0,r}(R_X),\;\;R_X \; a.s. \;\forall r \in {\cal T}.
\label{eqhatalpha2}
\end{equation}
Consider a sequence $(r_n)_n \subset {\cal T}$, $r_n \downarrow r$, with $r \in [0,T]$.
Passing to the limit in the corresponding equation to (\ref{eqhatalpha1}), we can see that
\begin{equation}
\hat x_{r,T}(X)=E_{R_X}(X|{\cal F}_r)-\hat \alpha_{r,T}(R_X), \;\;R_X \; a.s.\; \forall r\in [0,T].
\label{eqhatalpha1g}
\end{equation}

We have assumed that $X \in L_p({\cal F}_t)$ thus  $\hat x_{t,T}(X)=X$.  It follows then from (\ref{eqhatalpha1g}) applied with $r=t$, that  $\hat \alpha_{t,T}(R_X)=0$.

Then, again from  (\ref{eqhatalpha1g}) and the cocycle condition for $R_X \in {\cal R}$, it follows that, for all $X \in  L_p({\cal F}_t)$,
\begin{equation}
\hat x_{r,t}(X)=\hat x_{r,T}(X)=E_{R_X}(X|{\cal F}_r)-\hat \alpha_{r,t}(R_X)\;R_X \; a.s.\; \forall r,t\in [0,T].
\label{eqhatalpha5}
\end{equation}

\vspace{1mm}
{\it Step 4: Another  representation of $\hat x_{r,t}$ for all $0 \leq r \leq t \leq T$.}

Let $X \in L_p({\cal F}_t)$. 
We will prove that  $\hat x_{r,t}(X)$ admits the following representation:
\begin{equation}\begin{split}
\hat x_{r,t}(X)= & \esssup_{R \in {\cal R}} [E_R(X|{\cal F}_r) -\hat\alpha_{r,t}(R)] \\
= &E_{R_X}(X|{\cal F}_r)-\hat \alpha_{r,t}(R_X),
\end{split}
\label{eqrs}
\end{equation}
where $R_X$ satisfies equation (\ref{eqrep0}).
For all $r,t$ in ${\cal T}$, being $\hat \alpha_{r,t}$ the minimal penalty, we have that for all $R \in {\cal R}$, 
\begin{equation}
\hat  \alpha_{r,t}(R) \geq E_{R}(X|{\cal F}_r)-\hat x_{r,t}(X), \;  \forall X \in L_p({\cal F}_t).
\label{eqhatalpha6}
\end{equation}
The above equation can also be written
\begin{equation}
\hat  \alpha_{r,t}(R) \geq E_{R}(X|{\cal F}_r)-\hat x_{r,T}(X), \;  \forall X \in L_p({\cal F}_t).
\label{eqhatalpha7}
\end{equation}

Exploiting the right-continuity of the filtration and of $\hat \alpha_{r,t}$ and $\hat x_{r,t}$ as discussed in Steps 1 and 2, we can see that, passing to the limit in $r$ and then to the limit in $t$, equation (\ref{eqhatalpha7}) and thus also  (\ref{eqhatalpha6}) are satisfied for all $r \leq t$ in $[0,T]$.
 Equation (\ref{eqrs}) follows then from (\ref{eqhatalpha5}) and (\ref{eqhatalpha6}).

\vspace{1mm}
{\it Step 5 : Time consistency of $\hat x_{s,t}$.}

Set $0 \leq r \leq s \leq t \leq T$.
Let $ X \in L_p({\cal F}_t)$. Let $R_X \in {\cal R}$ such that  (\ref{eqrep0}) is satisfied.
From (\ref{eqrs}), for all $Y \in L_p({\cal F}_s)$, 
$\hat x_{r,s}(Y) \geq E_{R_X}(Y|{\cal F}_r) -\hat\alpha_{r,s}(R_X)$.
With $Y=\hat x_{s,t}(X)$, making use of equation (\ref{eqhatalpha5}) and of the cocycle condition,  it follows  that 
\begin{equation}
\hat x_{r,s}(\hat x_{s,t}(X)) \geq \hat x_{r,t}(X), \;\; \forall r \leq s \leq t.
\label{eqtc51}
\end{equation}
On the other hand, let $R_Y \in  {\cal R}$ such that  
\begin{equation}
\label{eqrep0Y}
\hat x_{0,T}(Y)=E_{R_Y}(Y)-\alpha_{0,T}(R_Y)
\end{equation}
(cf. \eqref{eqrep0}).
From equation (\ref{eqhatalpha5}), it follows that  for all $r, s$: $r \leq s$,
\begin{equation}
\hat x_{r,s}(Y)=E_{R_Y}(Y|{\cal F}_r)-\hat \alpha_{r,s}(R_Y)\;R_Y \; a.s.
\label{eqhatalpha5Y}
\end{equation}
We already know that $(\hat x_{s,t})_ {s,t \in {\cal T}}$ satisfies ${\cal T}$ time consistency, and the sandwich condition. 
From the right-continuity of $(\hat x_{s,t}(X))_{s \in [0,t]}$ for all $t \geq 0$ (see Step 1) and the hypothesis mM2, item 3,  it follows that  $\esssup_{s \leq t}|\hat x_{s,t}(X)| $ belongs to $L_p({\cal F}_t)^+$.  Let $s_n \in {\cal T}$ such that $s_n \downarrow s$.  From the right-continuity and the dominated convergence theorem, it follows that $E_{R_Y}(\hat x_{s,t}(X)|{\cal F}_r)=\lim_{n \rightarrow \infty}E_{R_Y} (\hat x_{s_n,t}(X)|{\cal F}_r)$. 
 The probability measure $R_Y$ belongs to ${\cal R}$ thus from step 2, $\hat \alpha_{r,s}(R_Y)=\lim_{n \rightarrow \infty} \hat \alpha_{r,s_n}(R_Y) $.  From equation (\ref{eqhatalpha5Y}), we then get that
\begin{equation}
\hat x_{r,s}(\hat x_{s,t}(X))=\hat x_{r,s}(Y)= \lim _{n \rightarrow \infty} [E_{R_Y}(\hat x_{s_n,t}(X)|{\cal F}_r)-\hat \alpha_{r,s_n}(R_Y)].
\label{eqhatalpha5Yb}
\end{equation}
From equation (\ref{eqrs}), for all $n$,  
$$
E_{R_Y}(\hat x_{s_n,t}(X)|{\cal F}_r)-\hat \alpha_{r,s_n}(R_Y) \leq \hat x_{r,s_n}(\hat x_{s_n,t}(X)).
$$
 In the case $r$ belongs to ${\cal T}$, applying the ${\cal T}$time consistency, we have that the right-hand side of the above equation is equal to $\hat x_{r,t}(X)$.  Thus
\begin{equation}
E_{R_Y}(\hat x_{s_n,t}(X)|{\cal F}_r)-\hat \alpha_{r,s_n}(R_Y) \leq \hat x_{r,t}(X).
\label{eqres}
\end{equation}
For a general $r \in [0,T]$, the corresponding equation to (\ref{eqres}) is obtained by right-continuity. 
Then from these equations together with (\ref{eqhatalpha5Yb}), we deduce that 
$\hat x_{r,s}(\hat x_{s,t}(X)) \leq \hat x_{r,t}(X)$. This, together with (\ref{eqtc51}) gives the time-consistency. 

\vspace{1mm}
{\it Step 6: Sandwich and projection property.}

When $r,t \in {\cal T}$, it follows from Theorem  \ref{teo1T}
that $\hat x_{rt}$ extends $x_{rt}$ and satisfies the sandwich condition. These properties extend to all $r \in [0,T]$ making use of the right-continuity and condition mM2 in $x_{r,t}$, $m_{r,t}$ and $M_{r,t}$. They extend then to every $t\in [0,T]$ using  the right-continuity of $m_{r,t}$ and $M_{r,t}$ in $t$ (see mM2), and the fact that  $x_{r,t}$  is the restriction of  $x_{r,T}$ .

For all $r \leq t \leq T$, the projection property for $\hat x_{r,t}$ follows from equation (\ref{eqhatalpha5})
\end{proof}

\begin{corollary}
The  extension $(\hat x_{st})_{s,t \in [0,T]}$ is maximal, in the sense that, for any other such  extension $\big(\bar x_{s,t} \big)_{s,t\in [0,T]}$ we have that: for all $s<t $,
$$
\hat x_{s,t}(X) \geq \bar x_{s,t}(X), \quad X \in L_p(\F_t).
$$
Furthermore  for all $s,t \in [0,T]$, and all $R \in {\cal R}$, $\hat \alpha_{s,t}(R)$ is the minimal penalty associated to $\hat x_{s,t}$, i.e. 
\begin{equation}
\hat \alpha_{s,t}(R)=\esssup_{X \in L_p({\cal F}_t)}[ E_R(X|{\cal F}_s)-\hat x_{s,t}(X)].
\label{eqmp}
\end{equation}
Define for all $R \sim P$ $\hat \alpha_{s,t}(R)$ by the formula (\ref{eqmp}). Then
$(\hat x_{s,t})_{s,t \in [0,T]}$ admits   the following representation where $\hat \alpha_{s,t}$ is the minimal penalty:
\begin{equation}
\hat x_{s,t}(X)=\esssup_{Q \sim P} [E_Q(X|{\cal F}_s) -\hat\alpha_{s,t}(Q)].
\label{eqrs2}
\end{equation}
\end{corollary}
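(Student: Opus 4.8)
The plan is to deduce all three assertions of the corollary — maximality of $(\hat x_{s,t})_{s,t\in[0,T]}$, the identification of $\hat\alpha_{s,t}$ with the minimal penalty on $\mathcal{R}$, and the representation \eqref{eqrs2} over all $Q\sim P$ — from the already established discrete-countable results of Theorem \ref{teo1T}, transported to continuous indices through the right-continuous passage built in Theorem \ref{teo2T2}.

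For maximality I would first restrict an arbitrary competing extension $(\bar x_{s,t})_{s,t\in[0,T]}$ to the countable dense grid $\T$. Its restriction is again a time-consistent, sandwich-preserving extension of $(x_{s,t})_{s,t\in\T}$, so the maximality part of Theorem \ref{teo1T} yields $\bar x_{s,t}(X)\le\hat x_{s,t}(X)$ for all $s,t\in\T$ and $X\in L_p(\F_t)$. To reach arbitrary indices I would use Remark \ref{4.1} to write both $\bar x_{s,t}$ and $\hat x_{s,t}$ as restrictions of $\bar x_{s,T}$ and $\hat x_{s,T}$, reducing the comparison to the first index; then for $s_n\downarrow s$ with $s_n\in\T$ I would let $n\to\infty$ in $\bar x_{s_n,T}(X)\le\hat x_{s_n,T}(X)$, invoking the assumed right-continuity of $\bar x$ and the c\`adl\`ag version of $\hat x_{\cdot,T}(X)$ from Step 1 of Theorem \ref{teo2T2}.

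For the minimal-penalty claim \eqref{eqmp} on $\mathcal{R}$, the inequality $\hat\alpha_{s,t}(R)\ge\esssup_{X\in L_p(\F_t)}\{E_R(X|\F_s)-\hat x_{s,t}(X)\}$ is immediate: equation \eqref{eqhatalpha6}, valid for all $r\le t$ by Step 4 of Theorem \ref{teo2T2}, says $\hat\alpha_{s,t}(R)\ge E_R(X|\F_s)-\hat x_{s,t}(X)$ for every $X$, and I would take the essential supremum over $X$. For the reverse inequality I would argue as in Step 4 of Theorem \ref{teo1T}: for $s,t\in\T$ the identity is exactly the minimal-penalty property proved there, and for general indices I would approximate by $s_n\downarrow s$ in $\T$, combine $\hat\alpha_{s,T}(R)=\lim_n E_R[\hat\alpha_{s_n,T}(R)|\F_s]$ (Step 2 of Theorem \ref{teo2T2}) with the fact that each $\hat\alpha_{s_n,T}$ is already minimal, and then reduce a general $t$ to the case $t=T$ through the cocycle relation \eqref{eqdefpen} and the restriction $\hat x_{s,t}=\hat x_{s,T}$ on $L_p(\F_t)$.

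The representation \eqref{eqrs2} then follows cheaply once $\hat\alpha_{s,t}$ is defined for all $Q\sim P$ by \eqref{eqmp}: the bound $E_Q(X|\F_s)-\hat\alpha_{s,t}(Q)\le\hat x_{s,t}(X)$ is built into \eqref{eqmp}, giving ``$\le$'' after taking the essential supremum over $Q$, while restricting the supremum to $\mathcal{R}$ and using the minimal-penalty identity together with \eqref{eqrs} gives ``$\ge$''. I expect the genuine difficulty to be the reverse inequality in \eqref{eqmp}, namely that the limiting penalty $\hat\alpha_{s,t}$ does not \emph{exceed} the minimal penalty $\esssup_X\{E_R(X|\F_s)-\hat x_{s,t}(X)\}$ of the continuous-time operator. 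In the finite-grid setting this came for free from a \emph{decreasing} approximation $x^n\downarrow\hat x$, which forces the approximating minimal penalties to lie below the limit minimal penalty; in continuous time no such pointwise-monotone approximation is available, and one must instead interchange the essential supremum over $X$ with the limit in $s$. This is exactly the point where the $Q_0$-supermartingale regularity and convergence of Proposition \ref{propext2}, the c\`adl\`ag modification of Step 1, and the uniform domination $\esssup_{s\le T}M_{s,T}(|X|)\in L_p(\F_T)^+$ from item 2 of the mM2 condition (Definition \ref{defmM2}) must enter, in order to license the relevant dominated-convergence passage.
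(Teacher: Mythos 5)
Your maximality argument and your derivation of \eqref{eqrs2} are correct and follow the same route as the paper: restriction of the competing extension to the dense grid $\T$, the discrete maximality of Theorem \ref{teo1T}, reduction to the terminal index via Remark \ref{4.1}, and right-continuity in the first index; likewise \eqref{eqrs2} is, as you say, a cheap consequence of \eqref{eqmp} together with \eqref{eqrs}. The easy half of \eqref{eqmp}, namely $\hat\alpha_{s,t}(R)\geq \esssup_{X}\{E_R(X|\F_s)-\hat x_{s,t}(X)\}$ from \eqref{eqhatalpha6}, is also fine. The gap is exactly where you suspected it: the reverse inequality in \eqref{eqmp}, and the mechanism you propose for it cannot close it.

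Write $\beta_{s,T}(R):=\esssup_{X\in L_p(\F_T)}\{E_R(X|\F_s)-\hat x_{s,T}(X)\}$ for the minimal penalty of the continuous-time operator. Using minimality of $\hat\alpha_{s_n,T}$ on the grid and the upward-directedness of the relevant families, your approximation reduces to comparing $\lim_n \esssup_X\{E_R(X|\F_s)-E_R[\hat x_{s_n,T}(X)|\F_s]\}$ with $\esssup_X\{E_R(X|\F_s)-\hat x_{s,T}(X)\}$. The dominated-convergence passage you invoke (via Proposition \ref{propext2}, the c\`adl\`ag modification, and item 2 of mM2) only gives convergence for each \emph{fixed} $X$, and the generic inequality between iterated limits then reads $\esssup_X \lim_n \leq \liminf_n \esssup_X$, i.e. $\beta_{s,T}(R)\leq\hat\alpha_{s,T}(R)$ --- which is the easy direction you already have. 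What you need is the opposite inequality, a limit of essential suprema dominated by the essential supremum of the limits, and that is false in general; no domination hypothesis repairs the order of quantifiers. The missing ingredient is structural, not a convergence theorem: one must use that $(\hat x_{s,t})_{s,t\in[0,T]}$ is $L_p(\F_s)$-translation invariant, hence (up to sign) a \emph{time-consistent dynamic risk measure on all of $[0,T]$}, and then appeal to the penalty theory of such objects. The paper does this by citing the appendix of Delbaen, Peng and Rosazza Gianin \cite{DRP}, which yields that the minimal penalty $\beta_{s,t}(Q)$ is right-continuous in both $s$ and $t$; since $\hat\alpha_{s,t}(R)$ is right-continuous by construction (Step 2 of Theorem \ref{teo2T2}) and the two penalties coincide for $s,t\in\T$ (Theorem \ref{teo1T}), they coincide for all $s,t\in[0,T]$. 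An equivalent repair of your argument would be to use the cocycle property of $\beta$ (a consequence of continuous-time time-consistency, cf. \cite{BN01}) and nonnegativity of penalties to get $\beta_{s,T}(R)\geq E_R[\beta_{s_n,T}(R)|\F_s]=E_R[\hat\alpha_{s_n,T}(R)|\F_s]\rightarrow\hat\alpha_{s,T}(R)$; either way, some property of the limiting penalty process itself must be brought in, and your proposal never identifies it.
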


\begin{proof}
The maximality of  $(\hat x_{st})_{s,t \in [0,T]}$ among all the extensions satisfying the required properties follows from Theorem \ref{teo1T}  for $s,t \in {\cal T}$. For $s \in [0,T]$, we apply right-continuity. For $t \in [0,T]$ we apply the fact that $\hat x_{s,t}=\hat x_{s,T}$ and also $\bar x_{s,t}=\bar x_{s,T}$. (see Remark 4.1 and step 1 in the proof of the Theorem).  \\
Note that $\hat x_{s,t}(X+Y)=\hat x_{s,t}(X)+Y$ for all $X \in L_p({\cal F}_t)$ and $Y \in L_p({\cal F}_s)$. This property is known as \emph{$L_p({\cal F}_s)$-translation invariance}. 
 Thus $(\hat x_{st})_{s,t \in [0,T]}$ is up to a minus sign a time-consistent dynamic risk measure. Denote $\beta_{s,t}$  the minimal penalty associated to $\hat x_{s,t}$:
$$
\beta_{s,t}(Q)=\esssup_{X \in L_p({\cal F}_t)}[ E_Q(X|{\cal F}_s)-\hat x_{s,t}(X)].
$$
It follows from Delbaen et al \cite{DRP} appendix, that the minimal penalty $\beta_{s,t}(Q)$ is right-continuous both  in $s$ and $t$, for all $Q \sim P$.
Note that for all $R \in {\cal R}$, $ \hat \alpha_{s,t}(R)$  is also right  continuous both  in $s$ and $t$ (Theorem \ref{teo2T2}). Furthermore we know from Section   \ref{secdiscrete} that $ \hat \alpha_{s,t}(R)$  is the minimal penalty for $\hat x_{s,t}$ for all $s,t \in {\cal T}$. This implies that for all $s,t \in [0,T]$,  and all $R \in {\cal R}$, $ \hat \alpha_{s,t}(R)=  \beta_{s,t}(R)$. 
\end{proof}

\vspace{3mm}
{\bf Acknowledgments.}
The authors thank CMA, University of Oslo, and Ecole Polytechnique, CMAP, Paris for the support in providing occasions of research visits. Also the program SEFE at CAS - Centre of Advanced Study at the Norwegian Academy of Science and Letters is gratefully acknowledged.

\bibliographystyle{plain}

\end{document}